\numberwithin{equation}{section}
\newtheorem{lemma}{Lemma}
\newtheorem{theorem}[lemma]{Theorem}
\newtheorem{corollary}[lemma]{Corollary}
\theoremstyle{definition}
\newtheorem{definition}{Definition}
\newtheorem{remark}{Remark}
\theoremstyle{remark}
\newtheorem*{example}{Example}
\DeclareMathOperator{\sgn}{sgn}
\DeclareMathOperator{\prob}{\mathbb P}
\DeclareMathOperator{\mean}{\mathbb E}
\DeclareMathOperator{\supp}{supp}
\newcommand{\SAT}{\mathsf{SAT}}
\title{On the Regularity of Random 2-SAT and 3-SAT}
\author{Andreas Basse-O'Connor, Tobias~L.\ Overgaard, Mette Skj{\o}tt}
\date{\today}
\begin{document}
\maketitle

\begin{abstract}
We consider the random $k$-{SAT} problem with $n$ variables, $m=m(n)$ clauses, and clause density $\alpha\coloneqq\lim_{n\to\infty}m/n$ for $k=2,3$. It is known that if~$\alpha$ is small enough, then the random $k$-SAT problem admits a solution with high probability, which we interpret as the problem being under-constrained. In this paper, we quantify exactly how under-constrained the random $k$-SAT problems are by determining their degrees of freedom, which we define as the threshold for the number of variables we can fix to an arbitrary value before the problem no longer is solvable with high probability. Our main result shows that the random $2$-SAT and $3$-SAT problems have $n/m^{1/2}$ and $n/m^{1/3}$ degrees of freedom, respectively. We also explicitly compute the corresponding threshold functions. Our result shows that the threshold function for the random $2$-SAT problem is regular, while it is non-regular for the random $3$-SAT problem. By regular, we mean continuous and analytic on the interior of its support. This result shows that the random $3$-SAT problem is more sensitive to small changes in the clause density~$\alpha$ than the random $2$-SAT problem.
\end{abstract}

\newpage
\tableofcontents

\newpage
\section{Introduction}

\subsection{Background}
For more than half a century, the Boolean $k$-satisfiability ($k$-SAT) problem has enjoyed continued interest in the field of computer science. The $3$-SAT problem was one of the five original $NP$-complete problems from Cooks seminal paper~\cite{cook71}, viz.\ $P=NP$ is equivalent to the existence of a polynomial-time algorithm for solving $3$-SAT problem instances. The $3$-SAT problem thus lies at the heart of the famous $P$~vs.~$NP$ problem. Similarly, the $2$-SAT problem lies at the heart of the important $L$ vs. $NL$ problem, as $L=NL$ is equivalent to the existence of a logarithmic-space algorithm for solving $2$-SAT problem instances (Thm.~16.3 in~\cite{papadimitriou94}), that is, the $2$-SAT problem is $NL$-complete. The $k$-SAT problem has also turned out to have great practical significance in artificial intelligence, bioinformatics, hardware verification, and more (see~\cite{GGW06,marques08}).

In applications, there seemed to be a major gap between the theoretical bounds for the runtime of SAT solving algorithms and the comparatively fast performance of SAT solvers (see~\cite{goldberg79,FP83,CKT91,SML96}). Thus, the study of the \emph{random} $k$-SAT problem arose, motivated by an interest in understanding the typical structure of a SAT problem instance (compared to the hitherto worst case analysis) and understanding where to find the ``hard'' SAT problems. The random $k$-SAT problem has for the last three decades remained highly relevant in the fields of probability theory and statistical physics (see~\cite{knuth15,DSS22} and references therein).

The random $k$-SAT problem~$\varphi$---also called a random $k$-CNF formula---with $m$ clauses and $n$ variables is obtained by taking $m$ i.i.d.\ random disjunctive clauses $C_1,\dots,C_m$ of length~$k$, i.e.\ of the form $C(x)=\sigma_1 x_{v_1}\lor\dots\lor\sigma_k x_{v_k}$, where $\sigma_1,\dots,\sigma_k\in\{-1,1\}$, and where $v_1,\dots,v_k\in[n]\coloneqq\{ 1,\dots,n\}$ are pairwise distinct. Each $C_j$ is chosen uniformly at random among the $2^k\binom{n}{k}$ possibilities. Then $\varphi$ is the conjunction/minimum of these, $\varphi\coloneqq C_1\land\dots\land C_m$. The problem is to determine whether there exists an assignment $x=(x_1,\dots,x_n)\in\{-1,1\}^n$ such that $\varphi(x)=1$, i.e.\ such that all the clauses are simultaneously ``satisfied''. We call such an~$x$ a \emph{solution} to~$\varphi$, and in the affirmative case, we say that $\varphi$ is \emph{satisfiable} and write $\varphi\in\SAT$. We will consider the random $k$-SAT problem asymptotically, so in the following, the random $k$-SAT problem/a random $k$-CNF formula with $m=m(n)$ clauses and $n$ variables refers to a \emph{sequence} of random $k$-CNF formulas, where the $n$'th term is a random $k$-CNF formula with $m(n)$ clauses and $n$ variables as described above.

In 1992 it was conjectured in~\cite{CR92} that for every $k\geq 2$ there exists a critical value $\alpha_\textup{c}(k)>0$, such that if $\varphi$ is a  random $k$-CNF formula with $m=m(n)$ clauses and $n$ variables, where $m/n\to\alpha$ as $n\to\infty$, then
\begin{equation}
\label{satconj}
    \lim_{n\to\infty}\prob(\varphi\in\SAT)=
    \begin{cases}
        1,&\text{if $\alpha<\alpha_\textup{c}(k)$}, \\
        0,&\text{if $\alpha>\alpha_\textup{c}(k)$},
    \end{cases}
\end{equation}
that is, the satisfiability of the random $k$-SAT problem is conjectured to undergo a phase transition as $\alpha$, the \emph{clause density}, crosses a critical threshold $\alpha_\textup{c}(k)$. In the regime $\alpha<\alpha_\textup{c}(k)$ in~\eqref{satconj}, where the problem is satisfiable with high probability (w.h.p.), we will call the random $k$-SAT problem \emph{under-constrained}.

In the present paper, we focus on the random $2$-SAT and $3$-SAT problems. In regards to the former, the satisfiability conjecture~\eqref{satconj} was proved independently in~\cite{CR92} and~\cite{goerdt96}, where it was shown that $\alpha_\textup{c}(2)=1$. Much more has since been discovered about the random $2$-SAT problem, and overall it remains an interesting and actively researched problem; see for instance~\cite{goerdt99,verhoeven99,BBCKW01,ACHLMPZ21,CCMRRZZ24}.

Conjecture~\eqref{satconj} has recently been proved for all $k\geq k_0$ in the monumental work~\cite{DSS22}, where $k_0$ is a large unknown constant, but for $k=3$ the problem remains elusive. Thus, the sharp satisfiability threshold for the random $3$-SAT problem is still an important open question. However, much research activity has been directed towards overcoming this challenge, and increasingly tighter bounds on which $\alpha$'s result in asymptotic satisfiability or unsatisfiability have been produced throughout the years (see \cite{FP83,franco84,BFU93,ED95,KMSP95,FS96,DB97,KKKS98,zito99,friedgut99,achlioptas00,JSW00,AS00,DBM02,HS03,KKL06,KKSVZ05,DKMP09}), showing for example that the random $3$-SAT problem is under-constrained when $\alpha<3.52$.

\subsection{Main result}
Let $\varphi$ be an under-constrained random $2$- or $3$-CNF formula. We pose and aim to answer the following question: \emph{how} under-constrained is~$\varphi$? Our proposed solution is based on the following idea: there are too many degrees of freedom in the variables $x_1,\dots,x_n$ compared to the number of clauses in~$\varphi$, so fix a number, say $f=f(n)$, of the variables $x_1,\dots,x_n$, each to either $1$ or $-1$. Does there still w.h.p.\ exist a solution to~$\varphi$ in this restricted search space? Presumably, if~$f$ is large enough, the answer will be no. We will represent the ``under-constrainedness'' of~$\varphi$ as the critical mass of the number~$f$ of variables needed to be fixed before~$\varphi$ becomes unsatisfiable w.h.p., and we call this number the \emph{degrees of freedom} in~$\varphi$. This is an analogue concept to the nullity, i.e.\ the size of the kernel/null space, of a (random) matrix; it is the number of variables we can ``freely'' fix before the corresponding system of equations no longer has a solution~(w.h.p.)

We say that a subset $\mathcal L$ of $\pm[n]\coloneqq\{-n,\dots,-1,1,\dots,n\}$ is \emph{consistent} if, for all $v\in[n]$, at most one of $v$ and $-v$ is in~$\mathcal L$. Here, $v\in\mathcal L$ represents fixing $x_v$ to~$1$ and $-v\in\mathcal L$ represents fixing $x_v$ to~$-1$ for each $v\in[n]$. Let $\varphi_\mathcal L$ denote the formula $\varphi$ with these restrictions to its domain.

\begin{definition}[Degrees of freedom in the random $k$-SAT problem]
    The random $k$-SAT problem~$\varphi$ with $m=m(n)$ clauses and $n$ variables has $f_\ast=f_\ast(n)$ \emph{degrees of freedom} if, for all $f=f(n)$ and all consistent (non-random) subsets $\mathcal L\subseteq\pm[n]$ with $\lvert\mathcal L\rvert=f$, it holds that $f_\ast$ is a threshold in~$f$ for the satisfiability of~$\varphi_\mathcal L$, that is, if
    \begin{equation}
    \label{varexcess}
        \lim_{n\to\infty}\prob(\varphi_\mathcal L\in\SAT)=
        \begin{cases}
            1,&\text{when $f/f_\ast\to 0$}, \\
            0,&\text{when $f/f_\ast\to\infty$}.
        \end{cases}
    \end{equation}
\end{definition}

The concept of a threshold hearkens back to the seminal paper~\cite{ER60} by Erd\H{o}s and R\'{e}nyi. In Theorem~\ref{mainthm} below, we find a threshold $f_\ast$, which we also show is unique up to asymptotic order, in the cases $k=2,3$. Furthermore, we find an explicit expression for the \emph{threshold function}
\begin{equation}
\label{defthreshfunc}
    \lim_{n\to\infty}\prob(\varphi_\mathcal L\in\SAT)
\end{equation}
when $0<\lim_{n\to\infty}(f/f_\ast)<\infty$. In the following, we interpret $e^{-\infty}\coloneqq 0$.

\begin{theorem}
\label{mainthm}
    Let $\varphi$ be a random $k$-CNF formula with $m=m(n)$ clauses and $n$ variables such that $m\to\infty$ and $m/n\to\alpha$. Let $\mathcal L\subseteq\pm[n]$ be a consistent set with $f=f(n)$ elements. Let finally $0\leq\gamma\leq\infty$.
    \begin{enumerate}
        \item[{$k=2:$}] If $0\leq\alpha<1$ and $f\sqrt{m}/n\to\gamma$, then
        \begin{equation*}
            \lim_{n\to\infty}\prob(\varphi_\mathcal L\in\SAT)=e^{-(\gamma/2)^2(1-\alpha)^{-1}}.
        \end{equation*}
        In particular, the random $2$-SAT problem with $m$ clauses and $n$ variables has $n/\sqrt{m}$ degrees of freedom for clause density $\alpha<1$.
        \item[{$k=3:$}] If $0\leq\alpha<3.145$ and $fm^{1/3}/n\to\gamma$, then
        \begin{equation*}
            \lim_{n\to\infty}\prob(\varphi_\mathcal L\in\SAT)=e^{-(\gamma/2)^3}.
        \end{equation*}
        In particular, the random $3$-SAT problem with $m$ clauses and $n$ variables has $n/m^{1/3}$ degrees of freedom for clause density $\alpha<3.145$.
    \end{enumerate}
\end{theorem}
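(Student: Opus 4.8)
The plan is to use the same two-step skeleton in both cases---restrict the formula by $\mathcal L$, then count the local obstructions to satisfiability that the restriction creates---but with a finer obstruction-count for $k=2$. By the invariance of the random $k$-CNF model under variable permutations and literal-sign flips I may assume $\mathcal L=[f]$, so that $x_1,\dots,x_f$ are all fixed to~$1$; note $\gamma<\infty$ then forces $f=o(n)$. Let $X$ be the number of clauses of $\varphi$ all of whose literals are falsified by $\mathcal L$. Since the clauses are i.i.d., $X$ is $\mathrm{Binomial}(m,p)$ with $p=\binom fk\big/\big(2^k\binom nk\big)\sim(f/2n)^k$, so $mp\sim 2^{-k}(fm^{1/k}/n)^k$. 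If $\gamma=\infty$ then $mp\to\infty$ and $\prob(\varphi_{\mathcal L}\in\SAT)\le\prob(X=0)=(1-p)^m\to 0$; if $\gamma<\infty$ then $mp\to(\gamma/2)^k$, so $X\xrightarrow{d}\mathrm{Poisson}((\gamma/2)^k)$, and since a falsified clause makes $\varphi_{\mathcal L}$ unsatisfiable we already obtain the upper bound $\prob(\varphi_{\mathcal L}\in\SAT)\le\prob(X=0)\to e^{-(\gamma/2)^k}$.

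For $k=3$ this upper bound is the claimed value, so it only remains to prove it is attained, i.e.\ that $\prob(X=0,\ \varphi_{\mathcal L}\notin\SAT)\to 0$. Conditionally on $X=0$ the restricted formula, with its satisfied clauses deleted, is---up to a per-clause reweighting of clause types that is negligible at leading order---a uniformly random $3$-CNF of density $\to\alpha$ together with $O(m^{2/3})$ uniformly random $2$-clauses and $O(m^{1/3})$ uniformly random unit clauses, all on $n-f=n(1+o(1))$ variables. I would run the SCB (``shortest clause, with backtracking'') heuristic of Frieze--Suen on this mixed instance; it operates on clauses of mixed length, and its analysis certifies a satisfying assignment w.h.p.\ whenever the initial $3$-clause density is below $3.145$. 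Because the governing trajectory of the clause-length counts started from $o(n)$ unit clauses, $o(n)$ $2$-clauses and $(\alpha+o(1))n$ $3$-clauses agrees to leading order with the one started from the pure density-$\alpha$ configuration, the heuristic still succeeds w.h.p.\ for $\alpha<3.145$. Hence $\prob(\varphi_{\mathcal L}\in\SAT)=\prob(X=0)-\prob(X=0,\varphi_{\mathcal L}\notin\SAT)\to e^{-(\gamma/2)^3}$, and the degrees-of-freedom statement is the special case $f/f_\ast\to 0,\infty$ with $f_\ast=n/m^{1/3}$ (uniqueness of $f_\ast$ up to order being immediate from the strict monotonicity of $\gamma\mapsto e^{-(\gamma/2)^3}$).

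For $k=2$ the bound $e^{-(\gamma/2)^2}$ is not tight, because the restriction also creates $\Theta(\sqrt m)$ random unit clauses whose unit-propagation through the surviving $2$-clauses can yield contradictions. Beyond the $Y_1:=X$ falsified clauses I would therefore count, in $Y_2$, for every $j\ge 0$, the unordered pairs of singly-restricted clauses $\zeta_1\vee\eta_1$ and $\zeta_2\vee\eta_2$---with $\zeta_1,\zeta_2$ falsified and $\eta_1,\eta_2$ on unfixed variables---that are joined by a directed path of length $j$ from $\eta_1$ to $\bar\eta_2$ in the implication digraph of the surviving $2$-clauses, and put $Y=Y_1+Y_2$. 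Each such configuration forces $\varphi_{\mathcal L}\notin\SAT$, so $\prob(\varphi_{\mathcal L}\in\SAT)\le\prob(Y=0)$. A first-moment computation organized by the path length $j$ gives $\mean[Y_1]\to(\gamma/2)^2$ and $\mean[Y_2]\to\sum_{j\ge 0}(\gamma/2)^2\alpha^{\,j+1}$---each path edge contributes a factor $m/n\to\alpha$, which is where $\alpha<1$ enters---so $\mean[Y]\to\lambda:=(\gamma/2)^2(1-\alpha)^{-1}$; since every configuration involves only boundedly many clauses the higher factorial moments converge to those of $\mathrm{Poisson}(\lambda)$ (a Chen--Stein/method-of-moments argument, with the sum over $j$ controlled by the same geometric decay), whence $\prob(Y=0)\to e^{-\lambda}$. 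For the matching lower bound I would use the classical criterion that a $2$-CNF is unsatisfiable iff some variable lies together with its negation in a single strong component of the implication digraph: taking a directed cycle through a complementary pair in the residual formula, if it uses two or more ``forced'' edges arising from unit clauses then two cyclically consecutive such edges display a $Y_2$-configuration, while if it uses at most one, the cycle is either a bicycle of a $2$-SAT instance of density $\to\alpha<1$ (probability $o(1)$) or a single restricted clause together with an ordinary path from $\eta$ to $\bar\eta$ (expected number $O(fm/n^2)=o(1)$). Either way $\prob(Y=0,\ \varphi_{\mathcal L}\notin\SAT)\to 0$, so $\prob(\varphi_{\mathcal L}\in\SAT)\to e^{-\lambda}=e^{-(\gamma/2)^2(1-\alpha)^{-1}}$, with $f_\ast=n/\sqrt m$ as claimed.

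The main obstacle is twofold. For $k=3$ it is verifying that the $o(n)$ spurious short clauses produced by the restriction never derail the Frieze--Suen analysis: one must show that the trajectory of the clause-length counts, together with the backtracking accounting, is stable under $o(n)$ perturbations of the initial data throughout the $\Theta(n)$-step execution, and that the exceptional event in their argument remains $o(1)$ after conditioning on $X=0$. For $k=2$ it is the Poisson limit for $Y$: one must confirm that $Y_1$ and the $Y_2$-configurations are the only obstructions contributing at order~$1$ (all others being $o(1)$ in expectation), and push the factorial-moment estimates through uniformly in the path length $j$ so that the geometric series $\sum_j\alpha^j$ may legitimately be resummed inside the limit.
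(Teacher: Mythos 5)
Your strategy is a genuinely different route from the paper's. The paper runs a round-by-round unit-clause-propagation decomposition: in each round it splits the surviving clauses into $0$-, $1$-, $2$- (and, for $k=3$, $3$-) clauses, exploits the conditional independence and exact multinomial distribution of the type counts (Lemma~\ref{distofphiandm}) to control the new counts via Chernoff bounds, and iterates---$R=\Theta(\log n)$ times for $k=2$ (handing the residue to a subcritical ``cobra''-counting lemma), three times for $k=3$ (handing the residue to the mixed $2$- and $3$-SAT literature, \cite{achlioptas00,AKKK01}). You instead propose a one-shot first-moment/Poisson count of obstruction configurations. The first-moment arithmetic is correct: $\mean[Y_1]\to(\gamma/2)^2$ is the fully-falsified-clause term, and $\mean[Y_2]\to\sum_{j\ge0}(\gamma/2)^2\alpha^{j+1}$ recovers exactly the paper's geometric series $\tfrac14\beta^2\sum_{r\ge 2}\alpha^r$ term by term, giving $\mean[Y]\to(\gamma/2)^2(1-\alpha)^{-1}$ as required.

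The real work, which you do flag, is where the two routes differ in tractability, and here your sketch has genuine gaps. For $k=2$ the Poisson limit $\prob(Y=0)\to e^{-\lambda}$ needs factorial-moment or Chen--Stein control over a family of path-configurations of \emph{unbounded} length whose dependency structure (shared clauses, shared implication-path segments) is precisely what makes the variance computations in the snakes-and-snares literature delicate; this has to hold uniformly in~$j$ so the geometric series can be resummed inside the limit, and you do not indicate how to carry that out. The paper trades all of this for a sequence of clean, conditionally independent binomial estimates, which is the lighter burden. Your lower-bound sketch (strong components, ``forced edges,'' bicycles) is in the right spirit, but leaves unexamined the joint distribution of the propagated unit clauses and the residual $2$-SAT digraph---exactly what Lemma~\ref{distofphiandm} supplies in the paper's setup. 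For $k=3$, conditioning on $X=0$ and arguing SCB-trajectory stability under $o(n)$ perturbations throughout a $\Theta(n)$-step run is a substantial verification you outsource to intuition; you would also need to separately rule out contradictions among the $O(m^{1/3})$ propagated unit clauses (a random $1$-SAT instance with $f/\sqrt n\to 0$, easy by Lemma~\ref{random1sat}) and further $0$-clauses created by propagation, which the paper handles explicitly in its second and third decomposition rounds rather than leaving them implicit in an algorithmic stability claim.
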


\begin{remark}
\label{rem1}
    In both cases $k=2,3$ in Theorem~\ref{mainthm}, there are two distinct scenarios:
    \begin{enumerate}
        \item[{$\alpha=0:$}] Theorem~\ref{mainthm} in particular applies to the ``ultra'' under-constrained case $\alpha=0$ not traditionally studied in the literature, showing precisely how the number of constraints in a random $2$- or $3$-CNF formula affects the number of degrees of freedom. As an example, the random $2$-SAT problem with $n$ variables and only $\log(n)$ clauses has $n/\sqrt{\log(n)}$ degrees of freedom.
        \item[{$\alpha>0:$}] For $\alpha>0$, the random $k$-SAT problem with $m\sim\alpha n$ clauses and $n$ variables has $n^{1-1/k}$ degrees of freedom when $0<\alpha<1$ respectively $0<\alpha<3.145$ for $k=2,3$. Furthermore, the threshold functions are given by $e^{-(\beta/2)^2\alpha (1-\alpha)^{-1}}$ and $e^{-(\beta/2)^3\alpha}$ when $f/n^{1-1/k}\to\beta$ for some $0\leq\beta\leq\infty$. This statement is equivalent to Theorem~\ref{mainthm} and follows directly from the asymptotic equivalence $m\sim\alpha n$. An advantage of the parameter~$\beta$ is that it only depends on~$f$, where $\gamma$ depends on both~$m$ and~$f$; indeed, $\gamma=\beta\alpha^{1/k}$.
    \end{enumerate}
\end{remark}

\begin{figure}[htb]
    \centering
    \includegraphics[width=0.8\linewidth]{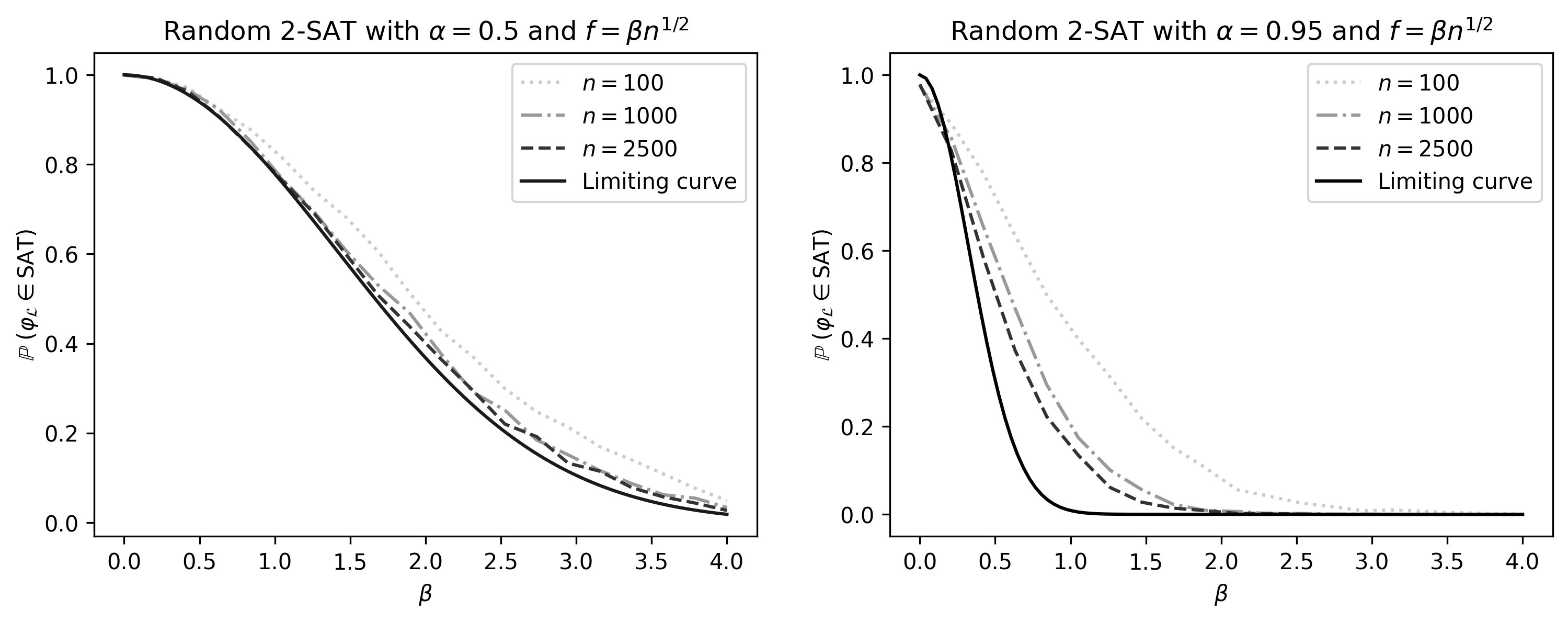}
    \includegraphics[width=0.8\linewidth]{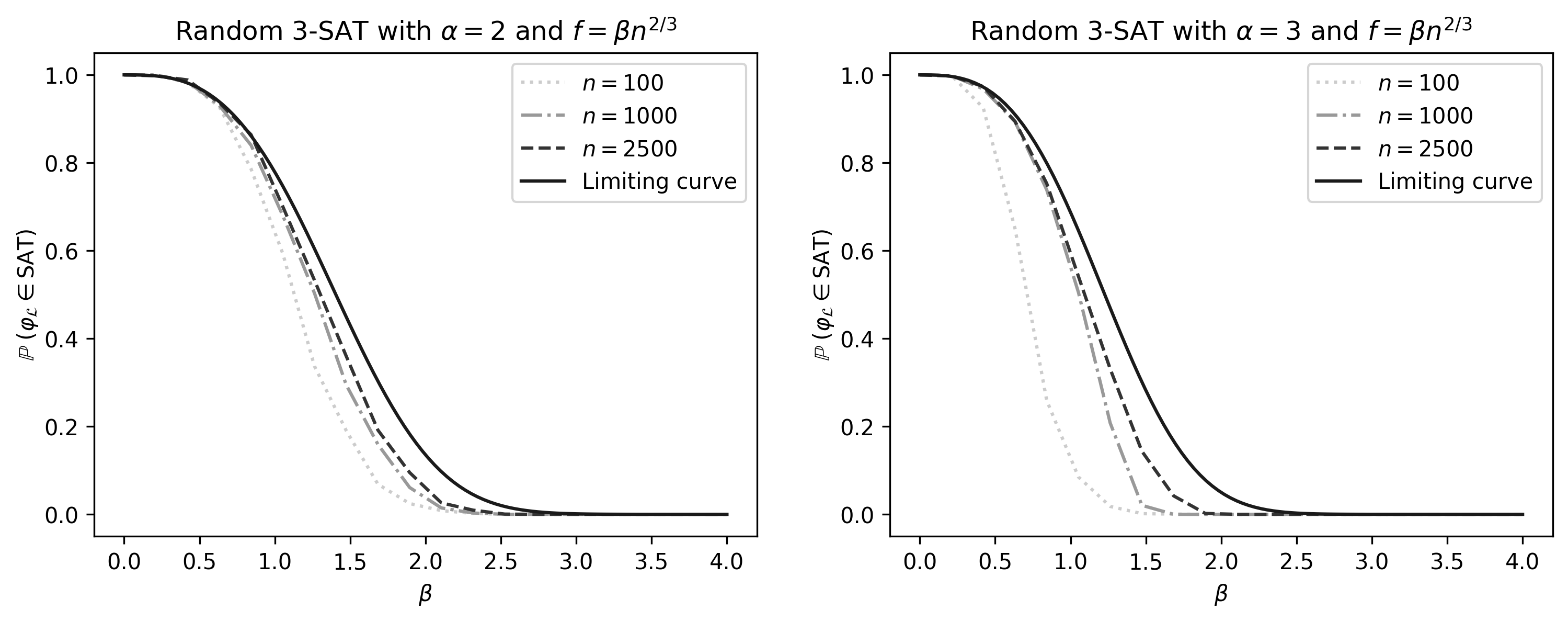}
    \caption{Finite size sampling of the threshold functions corresponding to the degrees of freedom in the random $2$- and $3$-SAT problems. Each datapoint (40 for each curve) is comprised of 2000 simulations.}
    \label{figsatsim}
\end{figure}

Figure~\ref{figsatsim} shows finite size sampling of the threshold functions from Theorem~\ref{mainthm} for fixed values of~$\alpha$. The simulations suggest that the convergence is faster for smaller values of~$\alpha$.

\begin{remark}
\label{existence}
    We now prove uniqueness modulo asymptotic order of the degrees of freedom for the random $k$-SAT problem, $k=2,3$. Let $\varphi$ be a random $k$-CNF formula with $m=m(n)$ clauses and $n$ variables, $m/n\to\alpha$, where $\alpha<1$ if $k=2$ or $\alpha<3.145$ if $k=3$, and let $\mathcal L\subseteq\pm[n]$ be consistent. We have seen in Theorem~\ref{mainthm} that $f_\ast=n/m^{1/k}$ is a threshold in $\lvert\mathcal L\rvert$ for the satisfiability of $\varphi_\mathcal L$, cf.~\eqref{varexcess}. Assume that $g_\ast=g_\ast(n)$ is another such threshold. Then, taking $\lvert\mathcal L\rvert=f_\ast$, we get $0<\lim_{n\to\infty}\prob(\varphi_\mathcal L\in\SAT)<1$ from Theorem~\ref{mainthm}. Hence, we cannot have $f_\ast/g_\ast\to 0$ or $f_\ast/g_\ast\to\infty$ by~\eqref{varexcess}, as this would imply $\lim_{n\to\infty}\prob(\varphi_\mathcal L\in\SAT)=1$ or $\lim_{n\to\infty}\prob(\varphi_\mathcal L\in\SAT)=0$, respectively. Furthermore, by considering sub-sequences, we cannot even have $\liminf_{n\to\infty}(f_\ast/g_\ast)=0$ or $\limsup_{n\to\infty}(f_\ast/g_\ast)=\infty$, that is, $f_\ast$ and $g_\ast$ are of the same asymptotic order.
\end{remark}

We see that the threshold functions $e^{-(\gamma/2)^2(1-\alpha)^{-1}}$ and $e^{-(\gamma/2)^3}$ are real analytic functions in~$\gamma$ that are bounded away from~$0$ and~$1$ (when $0<\gamma<\infty$), and thus the threshold~$f_\ast$ in~\eqref{varexcess} is \emph{not} sharp. A sharp threshold would correspond to a step function from~$1$ to~$0$ at some critical value~$\gamma_\textup{c}$, which is not what we find. Hence, this is a markedly different type of threshold compared to the sharp threshold seen for example in the Boolean satisfiability conjecture~\eqref{satconj} (still only conjecture for $k=3$ but proved for $k=2$).

Returning again to our original question, we now have a way to quantify under-constraindness in the random $k$-SAT problem. Let us compare the random $2$-SAT and $3$-SAT problems: intuitively, clauses of length $3$ (ternary clauses) are ``less constraining'' than clauses of length $2$ (binary clauses). Indeed, the degrees of freedom also reflect this; let $\varphi$ be a random $3$-CNF formula with $n$ variables and $m\sim\alpha n$ clauses, where $0<\alpha<3.145$. Then from Theorem~\ref{mainthm} it follows that $\varphi$ has $n^{2/3}$ degrees of freedom, which is the same as a random $2$-CNF formula with $n$ variables and only $\Theta(n^{2/3})$ clauses!

To the best of our knowledge, Theorem~\ref{mainthm} is the first result of its kind. We believe that degrees of freedom will become an important concept in the analysis of random constraint satisfaction problems.

\subsection{The search tree}
When searching for a solution to a given $k$-SAT instance, a common approach is to fix the value of~$x_1$ (e.g., set $x_1=1$). If this assignment does not violate any clauses, we then proceed to fix~$x_2$ (e.g., set $x_2=-1$). By continuing this process, we generate a \emph{search tree}, as illustrated in Figure~\ref{figfixintree}. The degrees of freedom~$f_\ast$ for the random $k$-SAT problem can be thought of as the maximum depth in the search tree that can be reached without violating the $k$-SAT instance, thus avoiding backtracking. According to Theorem~\ref{mainthm}, for the random $2$-SAT problem with $\alpha<1$, this maximum depth is~$n/\sqrt{m}$, while for the random $3$-SAT problem with $\alpha<3.145$, the maximum depth is~$n/m^{1/3}$. To the best of our knowledge, this is one of the first results quantifying the additional flexibility in solving the random $3$-SAT problem compared to the random $2$-SAT problem.

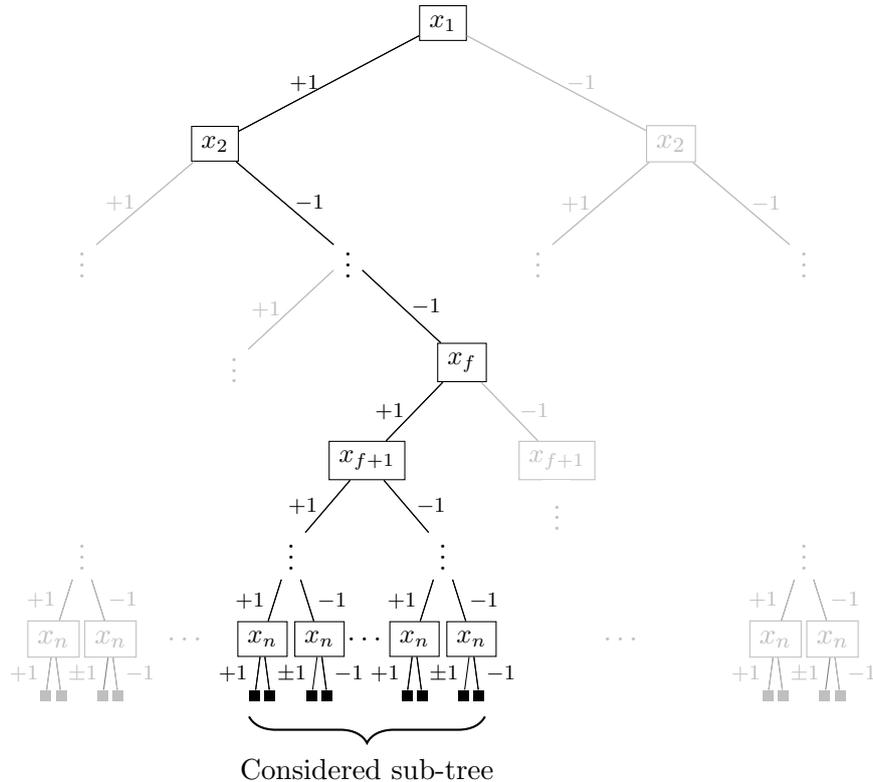
\begin{figure}[htb]
\centering
\begin{tikzpicture}
    [
    level 1/.style={sibling distance=6 cm, level distance = 1.6 cm},
    level 2/.style={sibling distance=3.5 cm, level distance = 1.5 cm},
    level 3/.style={sibling distance=3 cm, level distance = 1.4 cm},
    level 4/.style={sibling distance=2.5 cm, level distance = 1.3 cm},
    level 5/.style={sibling distance=2 cm, level distance = 1.177 cm},
    level 6/.style={sibling distance=0.75 cm, level distance = 1.19 cm},
    level 7/.style={sibling distance=0.2 cm, level distance = 0.75 cm},
    ]
    \node [draw, fill=white, text=black] (1) {\small{$x_1$}}
        child {node [draw, fill=white, text=black] (2_1) {\small{$x_2$}}
            child [lightgray] {node [text=lightgray] (3_1) {$\vdots$}
                child[white] {node (4_1) {!}
                    child {node (5_1) {!}
                        child {node [text=lightgray] (6_1) {$\vdots$}
                            child [gray]{node[draw=lightgray, text=lightgray] (7_1) {$x_n$}
                                child {node [fill=lightgray,inner sep=0.07 cm] (8_1) {}}
                                child {node [fill=lightgray,inner sep=0.07 cm] (8_2) {}}
                            } 
                            child [gray]{node[draw=lightgray, text=lightgray] (7_2) {$x_n$}
                                child {node [fill=lightgray,inner sep=0.07 cm] (8_3) {}}
                                child {node [fill=lightgray,inner sep=0.07 cm] (8_4) {}}
                            } 
                        } 
                    } 
                }
            }
            child {node (3_2) {$\vdots$}
                child [lightgray] {node[ text=lightgray] (4_3) {$\vdots$}
                }
                child {node[draw, fill=white, text=black] (4_4) {\small{$x_f$}}
                    child {node[draw, fill=white, text=black] (5_2) {\small{$x_{f+1}$}}
                        child {node (6_2) {$\vdots$}
                            child {node[draw, fill=white, text=black] (7_3) {\small{$x_n$}}
                                child {node [fill=black,inner sep=0.07 cm] (8_5) {}}
                                child {node [fill=black,inner sep=0.07 cm] (8_6) {}}
                            }
                            child {node[draw, fill=white, text=black] (7_4) {\small{$x_n$}}
                                child {node [fill=black,inner sep=0.07 cm] (8_7) {}}
                                child {node [fill=black,inner sep=0.07 cm] (8_8) {}}
                            }
                        }
                        child {node (6_3) {$\vdots$}
                            child {node[draw, fill=white, text=black] (7_5) {\small{$x_n$}}
                                child {node [fill=black,inner sep=0.07 cm] (8_9) {}}
                                child {node [fill=black,inner sep=0.07 cm] (8_10) {}}
                            }
                            child {node[draw, fill=white, text=black] (7_6) {\small{$x_n$}}
                                child {node [fill=black,inner sep=0.07 cm] (8_11) {}}
                                child {node [fill=black,inner sep=0.07 cm] (8_12) {}}
                            }
                        }
                    }
                    child[lightgray] {node[draw] (5_3) {\small{$x_{f+1}$}}
                        child {node (6_4) {}
                        }
                    }
                }
            }
        }
        child[lightgray] {node[draw=lightgray, text=lightgray] (2_2) {$x_2$}
            child {node[text=lightgray] (3_3) {$\vdots$}
            }
            child {node[text=lightgray] (3_4) {$\vdots$}
                child[white] {node (4_6) {!}
                    child {node (5_4) {!}
                        child {node [text=lightgray] (6_6) {$\vdots$}
                            child [gray]{node[draw=lightgray, text=lightgray] (7_7) {$x_n$}
                                child {node [ fill=lightgray,inner sep=0.07 cm] (8_13) {}}
                                child {node [ fill=lightgray,inner sep=0.07 cm] (8_14) {}}
                            } 
                            child [gray]{node[draw=lightgray, text=lightgray] (7_8) {$x_n$}
                                child {node [fill=lightgray,inner sep=0.07 cm] (8_15) {}}
                                child {node [ fill=lightgray,inner sep=0.07 cm] (8_16) {}}
                            } 
                        } 
                    } 
                }
            }
        }
    ;

    \path (7_2) -- (7_3) node [midway, text=lightgray] {$\dotsc$};
    \path (7_6) -- (7_7) node [midway, text=lightgray] {$\dotsc$};
    \path (7_4) -- (7_5) node [midway, text=black] {$\dotsc$};
    
    \draw (1)--node[midway, left]{\scriptsize $+1$}(2_1);
    \draw[lightgray] (1)--node[midway, right]{\scriptsize $-1$}(2_2);
    \draw[lightgray] (2_1)--node[midway, left]{\scriptsize $+1$}(3_1);
    \draw (2_1)--node[midway, right]{\scriptsize $-1$}(3_2);
    \draw[lightgray] (2_2)--node[midway, left]{\scriptsize $+1$}(3_3);
    \draw[lightgray] (2_2)--node[midway, right]{\scriptsize $-1$}(3_4);
    \draw[lightgray] (3_2)--node[midway, left]{\scriptsize $+1$}(4_3);
    \draw (3_2)--node[midway, right]{\scriptsize $-1$}(4_4);
    \draw (4_4)--node[midway, left]{\scriptsize $+1$}(5_2);
    \draw[lightgray] (4_4)--node[midway, right]{\scriptsize $-1$}(5_3);
    \draw (5_2)--node[midway, left]{\scriptsize $+1$}(6_2);
    \draw (5_2)--node[midway, right]{\scriptsize $-1$}(6_3);
    \draw[lightgray] (5_3)--node[midway, fill=white]{$\vdots$}(6_4);
    \draw[lightgray] (6_1)--node[midway, left]{\scriptsize $+1$}(7_1);
    \draw[lightgray] (6_1)--node[midway, right]{\scriptsize $-1$}(7_2);
    \draw (6_2)--node[midway, left]{\scriptsize $+1$}(7_3);
    \draw (6_2)--node[midway, right]{\scriptsize $-1$}(7_4);
    \draw (6_3)--node[midway, left]{\scriptsize $+1$}(7_5);
    \draw (6_3)--node[midway, right]{\scriptsize $-1$}(7_6);
    \draw[lightgray] (6_6)--node[midway, left]{\scriptsize $+1$}(7_7);
    \draw[lightgray] (6_6)--node[midway, right]{\scriptsize $-1$}(7_8);
    \draw[lightgray] (7_1)--node[midway, left]{\scriptsize $+1$}(8_1);
    \draw[lightgray] (7_1)--node[midway, right]{\scriptsize $\pm1$}(8_2);
    \draw[lightgray] (7_2)--node[midway, right]{\scriptsize $-1$}(8_4);
    \draw (7_3)--node[midway, left]{\scriptsize $+1$}(8_5);
    \draw (7_3)--node[midway, right]{\scriptsize $\pm1$}(8_6);
    \draw (7_4)--node[midway, right]{\scriptsize $-1$}(8_8);
    \draw (7_5)--node[midway, left]{\scriptsize $+1$}(8_9);
    \draw (7_5)--node[midway, right]{\scriptsize $\pm1$}(8_10);
    \draw (7_6)--node[midway, right]{\scriptsize $-1$}(8_12);
    \draw[lightgray] (7_7)--node[midway, left]{\scriptsize $+1$}(8_13);
    \draw[lightgray] (7_7)--node[midway, right]{\scriptsize $\pm1$}(8_14);
    \draw[lightgray] (7_8)--node[midway, right]{\scriptsize $-1$}(8_16);

    \draw[black!100!black,thick,decorate,decoration={brace,amplitude=10pt,mirror}] ([yshift=-0.2 cm] 8_5.south west) --  node[below=12pt] {Considered sub-tree} ([yshift=-0.2 cm] 8_12.south east);
\end{tikzpicture}
\caption{Going down $f$ steps into the search tree and thus only considering assignments $x\in\{-1,1\}^n$ on one of the sub-trees branching out at $x_{f+1}$.}
\label{figfixintree}
\end{figure}

\subsection{Mixed {SAT} problems}
As an almost immediate consequence of Theorem~\ref{mainthm}, we get the following characterization of asymptotic satisfiability in the random mixed $1$- and $k$-SAT problem when $k=2,3$.

\begin{theorem}
\label{mainthm2}
    Let $\varphi$ be a random $k$-CNF formula with $m=m(n)$ clauses and $n$ variables, where $m/n\to\alpha$, and let $\lambda$ be a random $1$-CNF formula with $f=f(n)$ clauses and $n$ variables, where $f/\sqrt{n}\to\beta$ for some $0\leq\beta\leq\infty$, such that $\varphi$ and $\lambda$ are independent.
    \begin{enumerate}
        \item[{$k=2:$}] If $0\leq\alpha<1$, then $\lim_{n\to\infty}\prob(\varphi\land\lambda\in\SAT)=e^{-(\beta/2)^2(1-\alpha)^{-1}}$.
        \item[{$k=3:$}] If $0\leq\alpha<3.145$, then $\lim_{n\to\infty}\prob(\varphi\land\lambda\in\SAT)=e^{-(\beta/2)^2}$.
    \end{enumerate}
\end{theorem}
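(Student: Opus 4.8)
The plan is to deduce Theorem~\ref{mainthm2} from Theorem~\ref{mainthm} by viewing the random $1$-CNF formula $\lambda$ as a \emph{random} choice of variables to fix. Write $\lambda=D_1\land\dots\land D_f$ with each $D_i=\sigma_i x_{v_i}$ an independent literal chosen uniformly among the $2n$ elements of $\pm[n]$, and set $\mathcal L=\mathcal L(\lambda)\coloneqq\{\sigma_1 v_1,\dots,\sigma_f v_f\}\subseteq\pm[n]$. Satisfying $D_i$ forces $x_{v_i}=\sigma_i$, so $\varphi\land\lambda\in\SAT$ holds precisely when $\mathcal L$ is consistent \emph{and} $\varphi_{\mathcal L}\in\SAT$; in particular $\lambda\in\SAT$ if and only if $\mathcal L$ is consistent. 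Since $\varphi$ and $\lambda$ are independent, conditioning on $\lambda$ gives
\begin{equation*}
    \prob(\varphi\land\lambda\in\SAT)=\mean\!\bigl[\mathbf 1\{\mathcal L(\lambda)\ \text{consistent}\}\,\prob\bigl(\varphi_{\mathcal L(\lambda)}\in\SAT\mid\lambda\bigr)\bigr].
\end{equation*}

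First I would evaluate $\prob(\lambda\in\SAT)=\prob(\mathcal L(\lambda)\ \text{consistent})$. Inconsistency of $\mathcal L(\lambda)$ is exactly the event that two of the $f$ i.i.d.\ uniform literals are negatives of each other, a birthday-type coincidence. The expected number of unordered pairs $\{i,j\}$ with $\sigma_i v_i=-\sigma_j v_j$ equals $\binom f2\tfrac1{2n}\to(\beta/2)^2$ when $0\le\beta<\infty$, and, the dependency graph of these pair-events having bounded degree, a Chen--Stein (equivalently, second-moment) argument shows the number of such pairs is asymptotically $\mathrm{Poisson}\bigl((\beta/2)^2\bigr)$; hence $\prob(\lambda\in\SAT)\to e^{-(\beta/2)^2}$. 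When $\beta=\infty$, restricting to the first $\lceil C\sqrt n\rceil$ clauses gives $\limsup_n\prob(\lambda\in\SAT)\le e^{-C^2/4}$ for every $C>0$, so $\prob(\lambda\in\SAT)\to 0=e^{-\infty}$.

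Next I would handle the inner probability, assuming for now $0<\beta<\infty$. By exchangeability of the variable labels in $\varphi$, the quantity $p_n(j)\coloneqq\prob(\varphi_{\mathcal L'}\in\SAT)$ for a deterministic consistent $\mathcal L'$ of size $j$ depends only on $j$, and $p_n$ is non-increasing, because fixing more variables cannot create satisfiability. The number of lost literals $D\coloneqq f-\lvert\mathcal L(\lambda)\rvert$ (same-sign repeats) has $\mean D\le\binom f2/(2n)=O(1)$, so $D=O_{\mathbb P}(1)$; on the event $\{D\le M\}$ we have $p_n(f)\le p_n(\lvert\mathcal L(\lambda)\rvert)\le p_n(f-M)$. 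Now $fm^{1/k}/n\to\beta\sqrt\alpha$ for $k=2$ and $\to 0$ for $k=3$ (using $f\sim\beta\sqrt n$, $m\sim\alpha n$), and the same holds with $f$ replaced by $f-M$ for any fixed $M$; so by Theorem~\ref{mainthm} both $p_n(f)$ and $p_n(f-M)$ converge to the common limit $L_k$, where $L_2=e^{-\beta^2\alpha/(4(1-\alpha))}$ and $L_3=1$. Letting $M\to\infty$ after $n\to\infty$ then yields $p_n(\lvert\mathcal L(\lambda)\rvert)\to L_k$ in probability.

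Finally, since $p_n(\lvert\mathcal L(\lambda)\rvert)$ is $[0,1]$-valued and converges in probability to the constant $L_k$, bounded convergence applied to the displayed identity gives
\begin{equation*}
    \prob(\varphi\land\lambda\in\SAT)=L_k\,\prob(\lambda\in\SAT)+o(1)\longrightarrow L_k\,e^{-(\beta/2)^2},
\end{equation*}
which is $e^{-\beta^2\alpha/(4(1-\alpha))}e^{-\beta^2/4}=e^{-(\beta/2)^2(1-\alpha)^{-1}}$ for $k=2$ and $e^{-(\beta/2)^2}$ for $k=3$, as claimed. The boundary values $\beta=0$ and $\beta=\infty$ follow more cheaply from the sandwich $p_n(f)\,\prob(\lambda\in\SAT)\le\prob(\varphi\land\lambda\in\SAT)\le\prob(\lambda\in\SAT)$ together with $p_n(f)\to 1$ (resp.\ $\prob(\lambda\in\SAT)\to 0$). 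I expect the only genuinely new ingredient to be the birthday/Poisson estimate for $\prob(\lambda\in\SAT)$; the rest is bookkeeping, and the single delicate point — that $\mathcal L(\lambda)$ is random rather than prescribed — is dispatched via the monotonicity of $p_n$ and the $O_{\mathbb P}(1)$ bound on collisions.
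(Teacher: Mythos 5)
Your proposal is correct and essentially replicates the paper's own deduction of Theorem~\ref{mainthm2} from Theorem~\ref{mainthm}: split $\varphi\land\lambda\in\SAT$ into $\lambda\in\SAT$ and $\varphi_\Lambda\in\SAT$ via~\eqref{splitoffunitclauses}, condition on $\lambda$, pass from the random set $\Lambda$ to a deterministic fixing set of the right size using exchangeability and monotonicity (Lemma~\ref{probdecreaseinc}), control the shortfall $f-\lvert\Lambda\rvert$ caused by repeated literals, and invoke Theorem~\ref{mainthm}. Two small points you should patch: the dependency graph in your Chen--Stein step for $\prob(\lambda\in\SAT)$ does \emph{not} have bounded degree --- each pair-event shares an index with $\Theta(f)$ others --- so the Poisson conclusion needs the actual $b_1,b_2$ estimate (or the paper's exact computation in Lemma~\ref{random1sat}), not a bounded-degree shortcut; and when $\alpha=0$, Theorem~\ref{mainthm} requires $m\to\infty$ whereas Theorem~\ref{mainthm2} does not, so that boundary case needs a separate monotone comparison, which the paper supplies via Lemma~\ref{probdecreaseinm}.
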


In Knuth's book on Boolean satisfiability~\cite{knuth15}, he writes: ``unit clauses aren’t rare: Far from it. Experience shows that they’re almost ubiquitous in practice, so that the actual search trees often involve only dozens of branch nodes instead of thousands or millions.'' In Theorem~\ref{mainthm2}, we analyze the effect of adding these ubiquitous unit clauses to the random $2$-SAT and $3$-SAT problems.

For the random $2$-SAT problem, the ``missing'' factor between the threshold functions in Theorem~\ref{mainthm2} and Theorem~\ref{mainthm} is exactly the probability that the random unit clauses comprising~$\lambda$ cause a contradiction, i.e.\ the probability that there exists some $v\in[n]$ such that both~$x_v$ and~$-x_v$ are clauses in~$\lambda$. If we condition on this event not occurring, then we reclaim the original limit $e^{-(\beta/2)^2\alpha (1-\alpha)^{-1}}$ from Theorem~\ref{mainthm} (see Remark~\ref{rem1}). And indeed, taking $\alpha=0$ in Theorem~\ref{mainthm2} gives the following result: if $\lambda$ is a random $1$-CNF formula with $n$ variables and $f$~clauses where $f/\sqrt{n}\to\beta$, then
\begin{equation}
\label{1satproblem}
    \lim_{n\to\infty}\prob(\lambda\in\SAT)=e^{-(\beta/2)^2}.
\end{equation}
This limit is again an analytic function in~$\beta$ (a Gaussian, even), and thus the random $1$-SAT problem does not exhibit a phase transition/sharp threshold in satisfiability. We also see that the relevant parameter to consider in this problem is not the usual clause density~$\alpha$, but rather~$\beta$, the ratio of clauses to the \emph{square root} of the number of variables. That $\alpha$ is the wrong parameter is no surprise; Chv\'{a}tal and Reed had naturally realized this and excluded the random $1$-SAT problem from Conjecture~\eqref{satconj}, and Gent and Walsh even outright state in~\cite{GW94} that a random $1$-CNF formula with $n$ variables and clause density~$\alpha>0$ is always unsatisfiable with high probability. Despite this, we believe that this paper is the first to identify the correct parameter~$\beta$ and with it completely characterize the random mixed $1$- and $2$-SAT problem in Theorem~\ref{mainthm2}.

Theorem~\ref{mainthm2} shows that the random mixed $1$- and $2$-SAT problem behaves like the random $1$-SAT problem, in that the limiting probability of satisfiability does not have a sharp threshold, but instead varies smoothly in the parameters~$\alpha$ and~$\beta$, and even the slightest change to either of these affects this limit. The addition of $\beta\sqrt{n}$~random unit clauses has thus ``smoothed out'' the phase transition of the random $2$-SAT problem seen in~\eqref{satconj}.

Another property of the random mixed $1$- and $2$-SAT problem is that we are able to ``exchange'' unit clauses for binary clauses---or the other way around---without changing the limiting probability of the mixed formula being satisfiable. Indeed, if $\varphi$ is again a random CNF~formula with $\beta\sqrt{n}$ unit clauses and $\alpha n$ binary clauses, and we want to exchange, say, half of the unit clauses, so that $\varphi$ is left with $(\beta/2)\sqrt{n}$ of these, then to compensate we need to add $3n$ random binary clauses and then remove three-quarters of this new total, so that $\varphi$ ends up with $((3+\alpha)/4)n$ binary clauses. Doing this exchange, the limiting probability that $\varphi$~is satisfiable will remain the same. More generally, if we want to exchange from~$\varphi$ to a random mixed CNF-formula $\varphi^\prime$ with $\beta^\prime\sqrt{n}$ unit clauses and $\alpha^\prime n$ binary clauses, then $\varphi$ and $\varphi^\prime$ have the same asymptotic satisfiability as long as
\begin{equation}
    \biggl(\frac{\beta^\prime}{\beta}\biggr)^2=\frac{1-\alpha^\prime}{1-\alpha}.
\end{equation}

This is a very distinct situation from the random mixed $1$- and $3$-SAT problem, detailed in the second part of Theorem~\ref{mainthm2}. Indeed, we see from Theorem~\ref{mainthm2} that one can add up to $3.145n$ random $3$-clauses to \emph{any} random $1$-CNF formula without changing the asymptotic probability of satisfiability (compare Theorem~\ref{mainthm2} case $k=3$ with~\eqref{1satproblem}). Hence, the ``exchange rate'' between $1$- and $3$-clauses is infinite, since we are able to add these $<3.145n$ random $3$-clauses ``for free''.

In regards to exchange rate, the random mixed $1$- and $3$-SAT problem shows more similarity with the random mixed $2$- and $3$-SAT problem, which was first studied in the physics literature~\cite{MZKST96,MZ98,MZKST99b,BMW00} and later given a rigorous treatment in~\cite{AKKK01} (slightly improved upon in~\cite{ZG13}). Here it was established that one can add at least $(2/3)n$ random $3$-clauses, but no more than $2.17n$, to any under-constrained random $2$-CNF formula and have the resulting formula remain under-constrained. Again, the exchange rate between $2$- and $3$-clauses is infinite, and the smaller clauses completely dominate the problem. That is, the asymptotic satisfiability is determined completely by the random $2$-CNF sub-formula, just as we saw that the asymptotic satisfiability of a random mixed $1$- and $3$-CNF formula is determined by the random $1$-CNF sub-formula (see Chapter~10 of~\cite{handbook} for a further discussion on exchange rate).

Returning finally to the random mixed $1$- and $2$-SAT problem again, we note that this problem is not only interesting in its own right, but it is also sometimes useful for proofs. Indeed, in the paper~\cite{achlioptas00}, Achlioptas uses the following result: if $\varphi$ is a random CNF formula with $n$ variables, $\alpha n$ binary clauses, and any polynomial in $\log(n)$ unit clauses, then $\varphi$~is under-constrained in the usual region $\alpha<1$. Our Theorem~\ref{mainthm2} is obviously a direct strengthening of this, since we show that one can take~$n^q$ for any $q<1/2$ (actually anything in~$o(\sqrt{n})$) in place of the polynomial in $\log(n)$ unit clauses and still get the same result. Theorem~\ref{mainthm2} also shows that this is \emph{optimal}, in the sense that adding $n^q$ independent random unit clauses to an under-constrained random $2$-CNF formula spoils the asymptotic satisfiability when $q\geq 1/2$.

\subsection{The threshold functions}
For $k=2,3$, $\alpha>0$ and $0\leq\beta<\infty$, let $\varphi$ be a random $k$-CNF formula with $m\sim\alpha n$ clauses and $n$ variables. We recall that $\varphi$ has $n^{1-1/k}$ degress of freedom. Let $\mathcal L\subseteq\pm[n]$ be consistent with $\lvert\mathcal L\rvert=f=f(n)$, where $f/n^{1-1/k}\to\beta$. We denote the threshold function from~\eqref{defthreshfunc} by
\begin{equation*}
    \pi_k(\alpha,\beta)\coloneqq\lim_{n\to\infty}\prob(\varphi_\mathcal L\in\SAT),
\end{equation*}
when this limit exists (and only depends on~$\alpha$ and~$\beta$), and define $\pi_k(\alpha,\beta)\coloneqq 0$ otherwise. From Theorem~\ref{mainthm}, we have that $\pi_2(\alpha,\beta)=e^{-(\beta/2)^2\alpha (1-\alpha)^{-1}}$ when $0<\alpha<1$ and $\pi_3(\alpha,\beta)=e^{-(\beta/2)^3\alpha}$ when $0<\alpha<3.145$. The graphs of $\pi_2$ and $\pi_3$ are plottet in Figure~\ref{figthreshfunc}.

\begin{figure}[htb]
    \centering
    \includegraphics[width=0.9\linewidth]{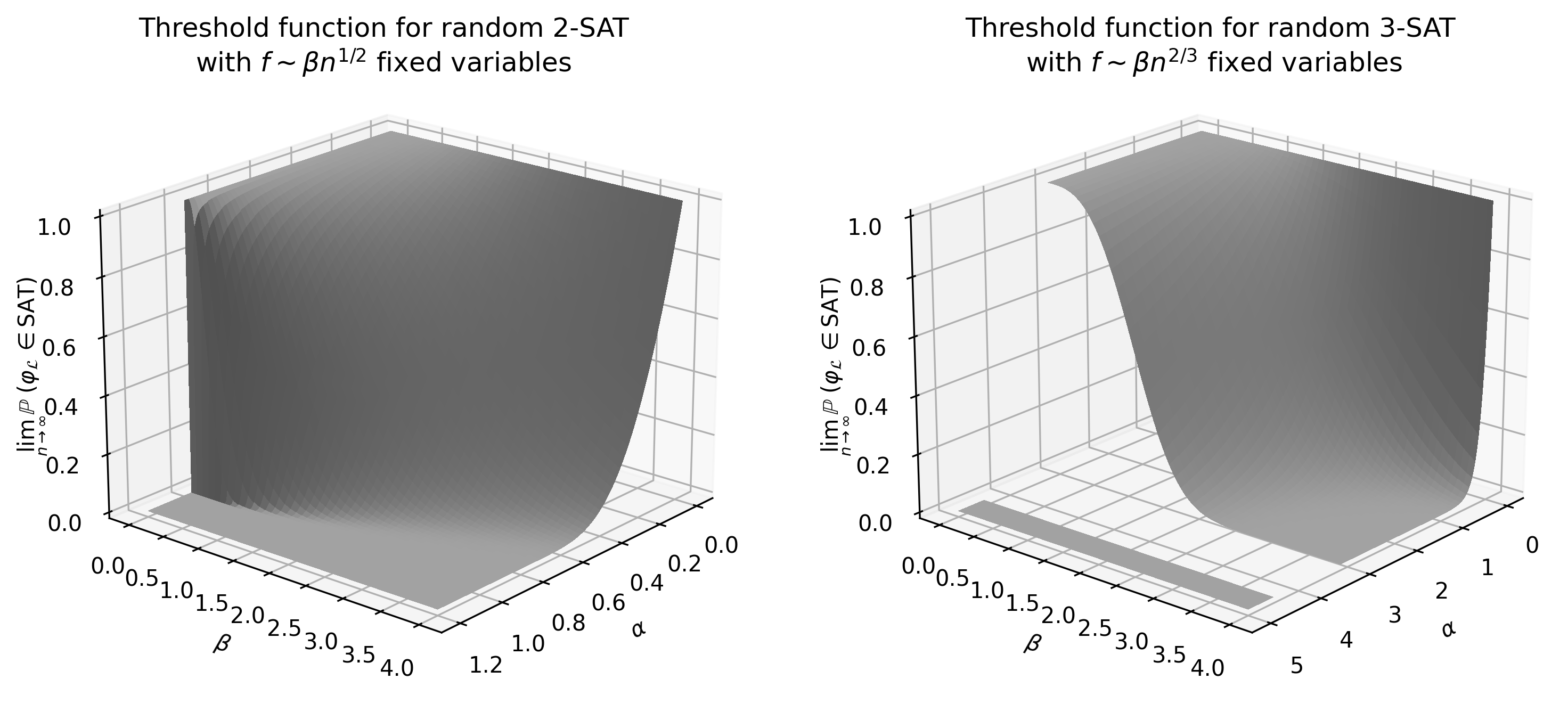}
    \caption{The threshold functions corresponding to the degrees of freedom in the random $2$-SAT and $3$-SAT problems.}
    \label{figthreshfunc}
\end{figure}

We define $\alpha_\textup{supp}(k)\coloneqq\inf\{\alpha\colon\pi_k(\alpha,\beta)=0\}$, the right endpoint of $\supp(\pi_k(\cdot,\beta))$. Now, if~$\varphi$ is unsatisfiable w.h.p., then $\pi_k(\alpha,\beta)=0$ for all $\beta\geq 0$, so we get $\alpha_\textup{supp}(3)\leq 4.4898$ from~\cite{DKMP09}, as this is the current best upper bound on~$\alpha_\textup{c}(3)$. Therefore, the value of the threshold function~$\pi_3(\alpha,\beta)$ is known for $\alpha<3.145$ (from our Theorem~\ref{mainthm}) and for $\alpha>4.4898$, which explains the ``gap'' in the graph in Figure~\ref{figthreshfunc} r.h.s. For the random $2$-SAT problem, we have $\alpha_\textup{supp}(2)=\alpha_\textup{c}(2)=1$ from~\cite{CR92} and Theorem~\ref{mainthm}.

What happens in this gap $\alpha\in[3.145,4.4898]$ remains an open problem with several potential scenarios possible. For the random $2$-SAT problem, we see that fixing $f\sim\beta\sqrt{n}$ variables ($\beta>0$) has ``smoothed out'' the sharp satisfiability threshold at $\alpha_\textup{c}(2)=1$ (this sharp threshold is visible at $\beta=0$). That is, the threshold function $\pi_2(\alpha,\beta)$ is continuous, and even $C^\infty$ (smooth), in~$\alpha$ on the entire domain $\alpha>0$, since $e^{-(\beta/2)^2\alpha (1-\alpha)^{-1}}$, and its derivatives of all orders in~$\alpha$, tend towards~$0$ as $\alpha\to 1$. On the other hand, $e^{-(\beta/2)^3\alpha}$ does not tend towards~$0$ as $\alpha\to\alpha_\textup{supp}(3)$ (whatever $\alpha_\textup{supp}(3)$ might be). From these observations and Theorem~\ref{mainthm}, we immediately get the following conclusion.

\begin{corollary}
\label{maincor}
    Let $\beta>0$ be given. For the random $2$-SAT problem, the threshold function $\pi_2(\alpha,\beta)$ is analytic for $0<\alpha<\alpha_\textup{supp}(2)=1$ and continuous (smooth, even) for all $\alpha>0$. For the random $3$-SAT problem, the threshold function $\pi_3(\alpha,\beta)$ is analytic for $0<\alpha<3.145$, but it \emph{cannot} be both analytic for $0<\alpha<\alpha_\textup{supp}(3)$ and continuous for all $\alpha>0$.
\end{corollary}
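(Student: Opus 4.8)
The plan is to prove all four assertions directly from the explicit threshold functions in Theorem~\ref{mainthm}, together with two elementary inputs from real analysis: the flatness of functions of the form $e^{-c/(1-\alpha)}$ at $\alpha=1$, and the identity theorem for real-analytic functions on an interval.

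For the random $2$-SAT problem, Theorem~\ref{mainthm} gives $\pi_2(\alpha,\beta)=e^{-(\beta/2)^2\alpha(1-\alpha)^{-1}}$ on $(0,1)=(0,\alpha_{\textup{supp}}(2))$; since $\alpha\mapsto\alpha(1-\alpha)^{-1}$ is rational with no pole in $(0,1)$, this composition is real-analytic there, which is the first claim. For the global smoothness claim I would first record that $\pi_2(\alpha,\beta)=0$ for every $\alpha>1$: indeed $\alpha>1=\alpha_{\textup c}(2)$ forces the underlying formula $\varphi$ to be unsatisfiable w.h.p.\ by \cite{CR92,goerdt96}, and $\varphi_{\mathcal L}\in\SAT$ implies $\varphi\in\SAT$, so $\prob(\varphi_{\mathcal L}\in\SAT)\le\prob(\varphi\in\SAT)\to 0$; at $\alpha=1$ the limit in~\eqref{defthreshfunc} is not a function of $(\alpha,\beta)$ alone (it depends on the rate at which $m(n)/n\to1$), so $\pi_2(1,\beta)=0$ by the convention defining $\pi_k$. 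It then remains to glue the analytic branch on $(0,1)$ to the zero branch on $[1,\infty)$: substituting $u=(1-\alpha)^{-1}$ turns each $\alpha$-derivative of $e^{-(\beta/2)^2\alpha(1-\alpha)^{-1}}$ into $e^{(\beta/2)^2}$ times a polynomial in $u$ times $e^{-(\beta/2)^2u}$, which tends to $0$ as $\alpha\uparrow1$. Hence all one-sided derivatives at $\alpha=1$ vanish from both sides, and the standard one-variable patching lemma yields $\pi_2(\cdot,\beta)\in C^\infty((0,\infty))$ (and $\pi_2$ is not analytic at $\alpha=1$, since it vanishes on $(1,\infty)$ but not on $(1-\varepsilon,1)$).

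For the random $3$-SAT problem, analyticity of $\pi_3(\alpha,\beta)=e^{-(\beta/2)^3\alpha}$ on $(0,3.145)$ is immediate, as this is entire in $\alpha$. For the impossibility statement I would fix $\beta>0$ and first locate $\alpha_{\textup{supp}}(3)$: since $e^{-(\beta/2)^3\alpha}>0$ and equals $\pi_3(\alpha,\beta)$ on $(0,3.145)$ while $\varphi$ is unsatisfiable w.h.p.\ for $\alpha>\alpha_{\textup c}(3)$, the zero set $\{\alpha:\pi_3(\alpha,\beta)=0\}$ lies in $[3.145,\infty)$ and is non-empty, so $3.145\le\alpha_{\textup{supp}}(3)\le4.4898$ using \cite{DKMP09}. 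Now suppose, for contradiction, that $\pi_3(\cdot,\beta)$ is both analytic on $(0,\alpha_{\textup{supp}}(3))$ and continuous on $(0,\infty)$. The analytic functions $\alpha\mapsto\pi_3(\alpha,\beta)$ and $\alpha\mapsto e^{-(\beta/2)^3\alpha}$ agree on the non-degenerate interval $(0,3.145)$, which (as $3.145\le\alpha_{\textup{supp}}(3)$) is contained in $(0,\alpha_{\textup{supp}}(3))$; by the identity theorem they agree on all of $(0,\alpha_{\textup{supp}}(3))$, so letting $\alpha\uparrow\alpha_{\textup{supp}}(3)$ and invoking continuity gives $\pi_3(\alpha_{\textup{supp}}(3),\beta)=e^{-(\beta/2)^3\alpha_{\textup{supp}}(3)}>0$. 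But $\alpha_{\textup{supp}}(3)$ is the infimum of the zero set, so there is a sequence $\alpha_j\downarrow\alpha_{\textup{supp}}(3)$ with $\pi_3(\alpha_j,\beta)=0$, and continuity then forces $\pi_3(\alpha_{\textup{supp}}(3),\beta)=0$, a contradiction. Hence $\pi_3(\cdot,\beta)$ cannot have both properties.

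I do not expect a genuinely hard step: the whole argument reduces to Theorem~\ref{mainthm} plus textbook real analysis. The only point needing care is the behaviour exactly at the boundary values $\alpha=1$ (for $k=2$) and $\alpha=\alpha_{\textup{supp}}(3)$ (for $k=3$), where one must pin down that $\pi_k$ takes the value $0$ --- via the convention in the definition of $\pi_k$ together with the sharpness of $\alpha_{\textup c}(2)$ in the first case, and via the infimum characterisation of $\alpha_{\textup{supp}}$ in the second --- since this is precisely where the failure of analyticity (for $k=2$) and the dichotomy (for $k=3$) are located.
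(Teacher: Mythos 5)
Your proposal is correct and follows essentially the same route as the paper: the analyticity claims fall out of the explicit formulas in Theorem~\ref{mainthm}, the $2$-SAT smoothness comes from the flatness of $e^{-(\beta/2)^2\alpha(1-\alpha)^{-1}}$ at $\alpha=1$, and the $3$-SAT non-regularity is obtained from the identity theorem plus the fact that $e^{-(\beta/2)^3\alpha}$ does not vanish at $\alpha_\textup{supp}(3)$. The paper's formal proof only records the last step (the contradiction for $\pi_3$), with the $2$-SAT observations and the identification of $\alpha_\textup{supp}(2)$ and the bound on $\alpha_\textup{supp}(3)$ absorbed into the preceding prose; you have simply written those steps out in full, including the glueing of the analytic branch on $(0,1)$ to the zero branch on $[1,\infty)$ and the infimum argument pinning $\pi_3(\alpha_\textup{supp}(3),\beta)=0$, both of which are fine.
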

\begin{proof}
Because of the identity theorem for analytic functions, if $\pi_3(\alpha,\beta)$ is real analytic for $0<\alpha<\alpha_\textup{supp}(3)$, then necessarily $\pi_3(\alpha,\beta)=e^{-(\beta/2)^3\alpha}$ for all $0<\alpha<\alpha_\textup{supp}(3)$, which precludes continuity at~$\alpha_\textup{supp}(3)$.
\end{proof}

Corollary~\ref{maincor} shows a significant distinction between the random $2$-SAT and $3$-SAT problems. While the threshold function for random $2$-SAT is regular---continuous and analytic on the interior of its support---the threshold function for random $3$-SAT is non-regular. Consequently, the random $3$-SAT problem is particularly sensitive to variations in the clause density~$\alpha$ compared to the random $2$-SAT problem.

\subsection{Structure}
The structure of the remainder of the paper is as follows: in Section~\ref{secsketchofproof} we give a sketch of the proof of Theorem~\ref{mainthm}, where we explain the main steps and difficulties. In Section~\ref{secpreliminaries} we review the necessary preliminaries for a complete proof of Theorems~\ref{mainthm} and~\ref{mainthm2}, and in Section~\ref{sectheproof} we give the full proof, starting with a number of smaller results required for later.

\newpage
\section{Sketch of proof}
\label{secsketchofproof}
We begin by outlining a proof of Theorem~\ref{mainthm} in the case $k=2$ when $\alpha>0$. The proof relies on the following two special cases: namely~\eqref{1satproblem}, the random $1$-SAT problem, a special case of Theorem~\ref{mainthm2}, and Theorem~\ref{mainthm} in the ``subcritical'' case where $\beta=0$. The latter says that fixing $o(\sqrt{n})$ variables in an under-constrained random $2$-CNF formula still yields a w.h.p.\ satisfiable formula, and we prove this using a novel variation on the classic ``snakes and snares'' proof of $\alpha_\textup{c}(2)=1$ from~\cite{CR92,goerdt96,knuth15}. The former, i.e.~\eqref{1satproblem}, is proved through a direct counting argument.

The idea of the proof is now as follows: let $\varphi=C_1\land C_2\land\dots\land C_m$ be a random $2$-CNF formula with $n$ variables and $m=m(n)$ clauses, where $m/n\to\alpha$, $0<\alpha<1$, and let $\mathcal L\subseteq\pm[n]$ be consistent with $\lvert\mathcal L\rvert=f=f(n)$, where $f/\sqrt{n}\to\beta$, and we assume without loss of generality that $0<\beta<\infty$. Now, a critical observation is the following: each of the clauses~$C_j$ experiences one of four fates when the variables dictated by~$\mathcal L$ are fixed:
\begin{enumerate}
    \item[0.] it becomes a $0$-clause, i.e.\ is unsatisfied, and thus precludes the satisfiability of~$\varphi_\mathcal L$,
    \item it becomes a $1$-clause and thus dictates the value of a new variable,
    \item it remains unaltered as a $2$-clause,
    \item[$\star$.] it becomes a $\star$-clause, i.e.\ is satisfied, and thus no longer affects the satisfiability of~$\varphi_\mathcal L$.
\end{enumerate}
The core of the proof is understanding and carefully controlling how many clauses land in each of the four cases above, and especially important are cases 0 and 1. Denote by $\mathcal C^{(1)}_1$ the set of $j\in [m]$ for which $C_j$ falls into case 1. The unit clauses corresponding to $\mathcal C^{(1)}_1$ form a $1$-CNF formula $\lambda^{(1)}=\min\{ C_j\colon j\in\mathcal C^{(1)}_1\}$, which must be satisfiable for $\varphi_\mathcal L$ to be satisfiable, and furthermore, they fix the value of $\lvert\mathcal C^{(1)}_1\rvert$ new variables. Thus, the ``remaining'' $2$-CNF formula (i.e.\ the conjunction of the clauses in case 2), call it $\varphi^{(1)}$, undergoes the same ``splitting'' into $0$-, $1$-, $2$-, and $\star$-clauses when fixing the variables dictated by~$\lambda^{(1)}$.

Let $\mathcal C^{(r)}_k$ denote the set of $j\in [m]$ for which $C_j$ falls into case $k\in\{ 0,1,2,\star\}$ in the $r$'th round of this iterative process. We denote by $\lambda^{(r)}$ and $\varphi^{(r)}$ respectively the random $1$-CNF and random $2$-CNF formula generated in this round, and let $\mathcal L_r$ denote the set of literals forming $\lambda^{(r)}$. For each~$r$ we rely on the fact that $\varphi^{(r-1)}_{\mathcal L_{r-1}}$ (or $\varphi_\mathcal L$ for $r=1$) is satisfiable if and only if
\begin{equation*}
    \lvert\mathcal C^{(r)}_0\rvert=0,\quad\lambda^{(r)}\in\SAT,\quad\text{and}\quad\varphi^{(r)}_{\mathcal L_r}\in\SAT.
\end{equation*}
It turns out that these three events are almost independent (the error vanishes as $n\to\infty$), so
\begin{equation}
\label{loosedecomp}
    \prob\bigl(\varphi^{(r-1)}_{\mathcal L_{r-1}}\in\SAT\bigr)\approx\prob\bigl(\lvert\mathcal C^{(r)}_0\rvert=0\bigr)\cdot\prob\bigl(\lambda^{(r)}\in\SAT\bigr)\cdot\prob\bigl(\varphi^{(r)}_{\mathcal L_{r}}\in\SAT\bigr)
\end{equation}
for each $r\in\mathbb N$. Using this approach, the important issue becomes: how large are $\lvert\mathcal C^{(r)}_0\rvert$ and~$\lvert\mathcal C^{(r)}_1\rvert$? This is obviously heavily dependent on the previous ``rounds'', but on average, $\lvert\mathcal C^{(r)}_1\rvert$ is of the order $\alpha^r\beta\sqrt{n}$. A core part of the proof is showing that $\lvert\mathcal C^{(r)}_1\rvert$ concentrates around this mean. Now, we have suppressed the dependence on~$n$ in the notation, but the iterative decomposition~\eqref{loosedecomp} is done for each $n\in\mathbb N$. The next step of the proof is to \emph{diagonalize}, taking $r$ dependent on~$n$; specifically, when $r$ becomes larger than $\log(n)$, then $\alpha^r\beta\sqrt{n}$, and thus $\lvert\mathcal C^{(r)}_1\rvert$, becomes \emph{smaller} than $\beta n^{1/2+\log(\alpha)}$ (where of course $\log(\alpha)<0$), and at this point the problem shifts into the subcritical case where the number of variables being fixed is $o(\sqrt{n})$, which is one of the special cases which we already proved. Indeed, taking $R\coloneqq c\log(n)$, where $c>0$ is some appropriate constant, we use the decomposition~\eqref{loosedecomp} $R$~times successively to get
\begin{equation*}
    \prob(\varphi_\mathcal L\in\SAT)\approx\prob\bigl(\varphi^{(R)}_{\mathcal L_{R}}\in\SAT\bigr)\prod_{r=1}^R\prob\bigl(\lvert\mathcal C^{(r)}_0\rvert=0\bigr)\prod_{r=1}^R\prob\bigl(\lambda^{(r)}\in\SAT\bigr),
\end{equation*}
and the special case gives us $\prob(\varphi^{(R)}_{\mathcal L_{R}}\in\SAT)\to 1$, and the other special case~\eqref{1satproblem} gives us (because of the concentration phenomenon)
\begin{equation*}
    \prob\bigl(\lambda^{(r)}\in\SAT\bigr)\longrightarrow e^{-\tfrac{1}{4}\beta^2\alpha^{2r}}
\end{equation*}
for each $r\in\mathbb N$. Finally, we are able to show directly that
\begin{equation*}
    \prob\bigl(\lvert\mathcal C^{(r)}_0\rvert=0\bigr)\longrightarrow e^{-\tfrac{1}{4}\beta^2\alpha^{2r-1}}
\end{equation*}
for each $r\in\mathbb N$, which will then imply (with some additional arguments) that
\begin{equation*}
    \prob(\varphi_\mathcal L\in\SAT)\longrightarrow\exp\biggl(\frac{-\beta^2}{4}\sum_{r=1}^\infty\alpha^r\biggr)=\exp\biggl(\frac{-\beta^2\alpha}{4(1-\alpha)}\biggr).
\end{equation*}
We see that each even term in the infinite series comes from the probability that the appearing unit clauses cause a contradiction among themselves, and each odd term comes from the probability of the appearance of a $0$-clause.

Besides some lemmas and the special cases, the main technical difficulty in converting the idea above into a proof is the concentration of~$\lvert\mathcal C^{(r)}_k\rvert$, in particular for $k=1$. We will find bounds $f_r^-,f_r^+$ such that $\lvert\mathcal C^{(r)}_1\rvert$ stays within $[f_r^-,f_r^+]$ w.h.p., and such that both $f_r^-,f_r^+\sim\alpha^r\beta\sqrt{n}$. In place of~$\lambda^{(r)}$ we insert an independent random $1$-CNF formula with $f_r^-$ resp.\ $f_r^+$ clauses, which will lead to an asymptotic upper resp.\ lower bound on $\prob(\varphi_\mathcal L\in\SAT)$. Namely, the idea will be to choose
\begin{equation*}
    f_r^\pm\approx\alpha^r f\pm r\alpha^r n^{3/8},
\end{equation*}
which exactly does the job, as we will see.

Now, if $\alpha=0$ (and $\varphi$ is still a random $2$-CNF formula), then already in the first ``round'' $r=1$ in the decomposition~\eqref{loosedecomp}, the number of $1$-clauses generated $\lvert\mathcal L_1\rvert$ is $o(\sqrt{n})$, so the subcritical special case may immediately be invoked.

For the case $k=3$ in Theorem~\ref{mainthm}, the same strategy may be employed. The decomposition~\eqref{loosedecomp} still holds, although $\varphi^{(r)}$ is now a random mixed $2$- and $3$-CNF formula. Thus, there are more things to keep track of in this case, and we will not have a proof of the subcritical case handy, making the whole thing slightly more tedious. On the other hand, we show that w.h.p.\ there are no more unit clauses appearing after three rounds, so we avoid infinite series. For this reason, we are also able to prove Theorem~\ref{mainthm} case $k=3$ for all $0\leq\alpha<3.145$ instead of having $\alpha=0$ separate. The threshold function comes from the fact that
\begin{equation*}
    \prob\bigl(\lvert\mathcal C^{(1)}_0\rvert=0\bigr)\longrightarrow e^{-(\gamma/2)^3},
\end{equation*}
which comes from a direct calculation. Thus, it remains to show that the other terms of the decomposition~\eqref{loosedecomp} tend towards $1$. This is again achieved via the concentration of the sets $\lvert\mathcal C^{(r)}_k\rvert$. We rely on existing literature to show that the final error term involving a random mixed $2$- and $3$-CNF formula vanishes.

\newpage
\section{Preliminaries}
\label{secpreliminaries}
In this section, we will review the necessary preliminary results and notations for Boolean functions $h:\{-1,1\}^n\to\{-1,1\}$ of $n\in\mathbb N$ variables needed for the formulation and proof of this papers results.

\subsection{Boolean functions}
For any $A\subseteq\mathbb Z$ we define:
\begin{equation*}
    A_\textup{abs}\coloneqq\{\lvert a\rvert\colon a\in A\},\quad-A\coloneqq\{-a\colon a\in A\},\quad\pm A\coloneqq A\cup(-A),
\end{equation*}
and $\lvert A\rvert$ for the number of elements in $A$. For $n\in\mathbb N$ we use the notation
\begin{equation*}
    [n]\coloneqq\{ 1,2,\dots,n\}.
\end{equation*}

We let $\SAT$ denote the set of all functions $h:B\to\{-1,1\}$ with the element~$1$ contained in the range of~$h$, where $B$ is a non-empty subset of $\{-1,1\}^n$ for any~$n$, called the \emph{satisfiable} functions. If $x\in B$ is such that $h(x)=1$, then $h$ is said to be \emph{satisfied} at~$x$. The collection of such~$x$ is called the \emph{solution space} for~$h$ and is denoted
\begin{equation*}
    \mathsf{SOL}(h)\coloneqq h^{-1}(\{ 1\})=\{ x\in B\colon h(x)=1\}\subseteq\{-1,1\}^n.
\end{equation*}

If $\mathcal B$ is a set of Boolean functions of $n$ variables, then we will say that $\mathcal B$ is \emph{consistent} if it is possible to satisfy all functions in $\mathcal B$ simultaneously, i.e.\ if there exists an $x\in\{-1,1\}^n$ such that $h(x)=1$ for all $h\in\mathcal B$. If we define a new Boolean function $\min(\mathcal B):\{-1,1\}^n\to\{-1,1\}$ by
\begin{equation}
\label{literalstocnf}
    \min(\mathcal B)(x)\coloneqq\min_{h\in\mathcal B}h(x),\quad (x\in\{-1,1\}^n),
\end{equation}
then $\mathcal B$ is consistent if and only if $\min(\mathcal B)\in\SAT$.

\subsection{Literals and fixing variables}
A function $l:\{-1,1\}^n\to\{-1,1\}$ is called a \emph{literal} if, for some $v\in[n]$,
\begin{equation*}
    l(x)=x_v\text{ for all $x$},\quad\text{or}\quad l(x)=-x_v\text{ for all $x$}.
\end{equation*}
Each literal (of $n$ variables) corresponds to an element of $\pm[n]$ in the following way: the literal $x\mapsto x_v$ corresponds to~$v$ and $x\mapsto-x_v$ corresponds to~$-v$. This correspondence is bijective since taking an element $l\in\pm[n]$ to the literal $x\mapsto\sgn(l)x_{\lvert l\rvert}$ is the inverse operation. Thus, the space of literals in $n$ variables is identified with the set $\pm[n]$, and we shall use these two sets interchangeably in the following. So considering e.g.\ the number~$-5$ as a literal we have $-5(x)=-x_5$.

Notice that a literal~$l$ is satisfied at~$x$ (i.e.\ $l(x)=1$) exactly when $x_{\lvert l\rvert}=\sgn(l)$. It follows for a set of literals $\mathcal L\subseteq\pm[n]$ that, for all $x\in\{-1,1\}^n$,
\begin{equation*}
    l(x)=1\text{ for all }l\in\mathcal L\iff x_{\lvert l\rvert}=\sgn(l)\text{ for all }l\in\mathcal L,
\end{equation*}
and hence that $\mathcal L$ is consistent if and only if either $v\notin\mathcal L$ or $-v\notin\mathcal L$ for every $v\in[n]$, or equivalently if at most one of~$v$ and~$-v$ is a member of $\mathcal L$ for every $v\in[n]$. We see from this that for a consistent set of literals $\mathcal L$, satisfying $\min(\mathcal L)$ (at~$x$) amounts to fixing certain of the variables~$x_v$; more precisely fixing $x_{\lvert l\rvert}$ to~$\sgn(l)$ for every~$l\in\mathcal L$. Motivated by this we define for every consistent set of literals $\mathcal L\subseteq\pm[n]$ and every $v\in[n]$ the set
\begin{equation}
\label{Bdefn}
    B_v(\mathcal L)\coloneqq
    \begin{cases}
        \{ 1\},&\text{if $v\in\mathcal L$,} \\
        \{-1\},&\text{if $-v\in\mathcal L$,} \\
        \{-1,1\},&\text{if $v,-v\notin\mathcal L$,}
    \end{cases}
    \quad\text{and we define}\quad B(\mathcal L)\coloneqq\bigtimes_{v=1}^n B_v(\mathcal L).
\end{equation}
Then we exactly get $\mathsf{SOL}(\min(\mathcal L))=B(\mathcal L)$ by the preceding discussion (this identity will also hold for contradicting $\mathcal L$ if we extend the definition of $B_v(\mathcal L)$ to~$\emptyset$ when both $v,-v\in\mathcal L$). This again implies that for any Boolean function $h:\{-1,1\}^n\to\{-1,1\}$ it holds that
\begin{equation*}
    h\land\min(\mathcal L)\in\SAT\iff h\vert_{B(\mathcal L)}\in\SAT,
\end{equation*}
where $h\vert_{B(\mathcal L)}$ is the restriction of~$h$ to the set $B(\mathcal L)\subseteq\{-1,1\}^n$. Motivated by this, we define for any Boolean function $h:\{-1,1\}^n\to\{-1,1\}$ and any consistent set of literals $\mathcal L\subseteq\pm[n]$,
\begin{equation}
\label{fixvardefn}
    h_\mathcal L\coloneqq h\vert_{B(\mathcal L)}.
\end{equation}
Thus, $h_\mathcal L$ is the function~$h$ but with the variables dictated by $\mathcal L$ fixed. More generally we get for $\mathcal L\subseteq\pm[n]$ (not necessarily consistent) that
\begin{equation}
\label{splitoffunitclauses}
h\land\min(\mathcal L)\in\SAT\iff\min(\mathcal L)\in\SAT\quad\text{and}\quad h_\mathcal L\in\SAT.
\end{equation}

\begin{example}
    If $h:\{-1,1\}^n\to\{-1,1\}$ ($n\geq 5$ for this example) is again any Boolean function, and $\mathcal L=\{ 1,-2,4\}$, we see that $\mathcal L$ is dictating $x_1=1$, $x_2=-1$, and $x_4=1$, i.e.\
    \begin{equation*}
        B(\mathcal L)=\{ 1\}\times\{-1\}\times\{-1,1\}\times\{ 1\}\times\{-1,1\}^{n-4},
    \end{equation*}
    giving us
    \begin{equation*}
        h_\mathcal L(x)=h(1,-1,x_3,1,x_5,\dots,x_n),\quad (x\in B(\mathcal L)).
    \end{equation*}
\end{example}

\subsection{Clauses and CNF formulas}
A function $C:\{-1,1\}^n\to\{-1,1\}$ is called a \emph{(disjunctive) clause} if there exists $k\in\mathbb N$ and literals $l_1,l_2,\dots,l_k\in\pm[n]$ such that
\begin{equation*}
    C(x)=l_1(x)\lor l_2(x)\lor\dots\lor l_k(x)=\max_{i\in[k]}l_i(x),\quad (x\in\{-1,1\}^n).
\end{equation*}
The minimum such~$k$ is called the \emph{length} of~$C$, and we call~$C$ a $k$-clause. A $1$-clause is just a literal and is also called a \emph{unit clause}. Further, by convention we call the constant function $C\equiv-1$ a $0$-clause, i.e.\ a clause that is always unsatisfied, and the constant function $C\equiv 1$ a $\star$-clause, i.e.\ a clause that is always satisfied.

If \(C_1,C_2,\dots,C_m\) are clauses, then the function $\varphi:\{-1,1\}^n\to\{-1,1\}$ defined by
\begin{equation}
\label{cnfdef}
\varphi(x)=C_1(x)\land C_2(x)\land\dots\land C_m(x)=\min_{j\in[m]}C_j(x),\quad (x\in\{-1,1\}^n),
\end{equation}
is said to be a \emph{CNF formula}, and the form in~\eqref{cnfdef} is called a \emph{CNF representation} for~$\varphi$ (CNF is short for \emph{conjunctive normal form}). We allow $m=0$ which by convention gives $\varphi\equiv1$. If each of the clauses $C_1,C_2,\dots,C_m$ has length~$k$ (ignoring any $\star$-clauses), then $\varphi$ is called a $k$-CNF formula, and~\eqref{cnfdef} is called a $k$-CNF representation.

A $1$-CNF formula is just a conjunction/minimum of literals, and from any set of literals $\mathcal L$ it is of course possible to define a $1$-CNF formula~$\lambda$ by taking $\lambda=\min(\mathcal L)$ as defined in~\eqref{literalstocnf}. This operation sends all sets with contradicting literals to the same $1$-CNF formula (the constant function $x\mapsto -1$), but when restricted to the consistent sets of literals, it is one-to-one. Indeed, in this case $\mathsf{SOL}(\lambda)=B(\mathcal L)$, and $B(\mathcal L)$ determines~$\mathcal L$. Thus, for any \emph{satisfiable} $1$-CNF formula~$\lambda$ with corresponding set of literals~$\mathcal L$ (i.e.\ $\lambda=\min(\mathcal L)$) we may and do write~$h_\lambda$ instead of~$h_\mathcal L$ for any $h:\{-1,1\}^n\to\{-1,1\}$.

\subsection{Fixing variables in CNF formulas}
Consider first a $2$-CNF formula
\begin{equation*}
    \varphi=C_1\land C_2\land\dots\land C_m,
\end{equation*}
each clause given by the disjunction of two literals: $C_j=l_{j,1}\lor l_{j,2}$, where $l_{j,i}\in\pm[n]$ for all $j\in[m]$ and $i=1,2$, and consider in addition a consistent set of literals $\mathcal L\subseteq\pm[n]$. In the formula~$\varphi_\mathcal L$, each literal $l_{j,i}$ has a possibility of being fixed, and this happens exactly when $l_{j,i}\in\pm\mathcal L$. More precisely, if $l_{j,i}\in\mathcal L$, then $(l_{j,i})_\mathcal L\equiv 1$, and if $l_{j,i}\in -\mathcal L$, then $(l_{j,i})_\mathcal L\equiv-1$, and lastly if $l_{j,i}\notin\pm\mathcal L$, then $(l_{j,i})_\mathcal L=l_{j,i}$. This yields the following cases for $(C_j)_\mathcal L$ for each $j\in[m]$:
\begin{enumerate}
    \item[0.] If both literals get fixed to~$-1$, i.e.\ $l_{j,1},l_{j,2}\in-\mathcal L$, then $(C_j)_\mathcal L\equiv-1$ can never be satisfied.
    \item If one literal gets fixed to~$-1$, say $l_{j,1}\in-\mathcal L$, but the other is not fixed, i.e.\ $l_{j,2}\notin\pm\mathcal L$, then $(C_j)_\mathcal L=-1\lor l_{j,2}=l_{j,2}$, so $(C_j)_\mathcal L$ is the unit clause $l_{j,2}$. Similarly if $l_{j,2}\in-\mathcal L$ and $l_{j,1}\notin\pm\mathcal L$, then $(C_j)_\mathcal L=l_{j,1}$.
    \item If no literal gets fixed, i.e.\ both $l_{j,1},l_{j,2}\notin\pm\mathcal L$, then $(C_j)_\mathcal L=C_j$.
    \item[$\star$.] If one or both literals gets fixed to~$1$, i.e.\ $l_{j,1}\in\mathcal L$ or $l_{j,2}\in\mathcal L$, then $(C_j)_\mathcal L\equiv1$ is always satisfied.
\end{enumerate}
We see that each of the clauses $(C_j)_\mathcal L$ either becomes unsatisfiable (case~0), is transformed to a unit clause (case~1), remains intact (case~2), or is immediately satisfied (case~$\star$). Let for $k\in\{ 0,1,2,\star\}$
\begin{equation}
\label{defnmk}
    \mathcal C_k\coloneqq\{ j\in[m]\colon\text{$j$ is in case $k$}\},
\end{equation}
where we suppress the dependence on the~$l_{j,i}$'s and~$\mathcal L$ in the notation. That is, $\mathcal C_k$ consists of exactly those $j\in[m]$ where $C_j$ becomes a $k$-clause when fixing the variables dictated by~$\mathcal L$, i.e.\
\begin{equation*}
    j\in\mathcal C_k\iff\text{$(C_j)_\mathcal L$ is a $k$-clause}.
\end{equation*}
We can now split~$\varphi_\mathcal L$ into parts by defining for each $k\in\{ 0,1,2,\star\}$:
\begin{equation}
\label{defnpsik}
    \varphi_k\coloneqq\min_{j\in\mathcal C_k}(C_j)_\mathcal L,
\end{equation}
where $\min\emptyset=1$, so that $\varphi_k$ is a $k$-CNF formula, and $\varphi_\mathcal L=\varphi_0\land\varphi_1\land\varphi_2$. Using the latter and~\eqref{splitoffunitclauses}, we find that
\begin{equation}
\label{phiLsatiff}
    \varphi_\mathcal L\in\SAT\iff\lvert\mathcal C_0\rvert=0,\quad\varphi_1\in\SAT,\quad\text{and}\quad(\varphi_2)_{\varphi_1}\in\SAT.
\end{equation}
This method of ``splitting'' $\varphi_\mathcal L$ into its $0$-, $1$-, and $2$-CNF sub-formulas $\varphi_0,\varphi_1,\varphi_2$, along with the corresponding sets $\mathcal C_0,\mathcal C_1,\mathcal C_2$, will play a major role in the proof of the main theorem of this paper.

Consider now a $3$-CNF formula $\varphi=C_1\land\dots\land C_m$. By completely analogous considerations to the above, we see that each clause falls into one of of the following five cases when fixing the variables dictated by $\mathcal L$: becomes unsatisfiable (case~0), is transformed to a unit clause (case~1), is transformed to a binary clause (case~2), remains intact (case~3), or is immediately satisfied (case~$\star$). Again we may define $\mathcal C_k$ and $\varphi_k$ for $k\in\{ 0,1,2,3,\star\}$ as in~\eqref{defnmk} and~\eqref{defnpsik}, where then $\varphi_\mathcal L=\varphi_0\land\varphi_1\land\varphi_2\land\varphi_3$, and we get that
\begin{equation}
\label{phiLsatiff3}
    \varphi_\mathcal L\in\SAT\iff\lvert\mathcal C_0\rvert=0,\quad\varphi_1\in\SAT,\quad\text{and}\quad(\varphi_2\land\varphi_3)_{\varphi_1}\in\SAT.
\end{equation}

\subsection{Random CNF formulas}
A random $k$-CNF formula $\varphi$ with $m$~clauses and $n$~variables is obtained by sampling $m$ i.i.d.\ random $k$-clauses $C_1,\dots,C_m$ and taking their conjunction:
\begin{equation*}
    \varphi\coloneqq C_1\land\dots\land C_m=\min_{j\in[m]}C_j.
\end{equation*}
Each of the $C_j$ is obtained by sampling $V_j=(V_{j,1},\dots,V_{j,k})$ uniformly at random among all sequences of $k$ \emph{pairwise distinct} variables from $[n]$, and then sampling fair random signs (i.e.\ equal to $-1$ or $1$, each with probability $1/2$) $S_j=(S_{j,1},\dots,S_{j,k})$ independently of~$V_j$. This gives random literals $L_{j,i}\coloneqq S_{j,i}V_{j,i}$ and the random $k$-clause
\begin{equation*}
    C_j\coloneqq L_{j,1}\lor\dots\lor L_{j,k}=\max_{i\in [k]}L_{j,i}.
\end{equation*}
If the lengths of the clauses $C_1,\dots,C_j$ are not all chosen to be equal, we say that $\varphi$ is a random \emph{mixed} CNF formula.

\newpage
\section{The proof}
\label{sectheproof}

\subsection{Lemmas}
In this section, we establish a few technical lemmas needed for the proof of Theorem~\ref{mainthm}. We will first show that for a random $k$-CNF formula~$\varphi$ and consistent set of literals $\mathcal L\subseteq\pm[n]$, the probability that $\varphi_\mathcal L\in\SAT$ only depends on $\mathcal L$ through $\lvert\mathcal L\rvert$, the number of variables being fixed, and in a non-increasing way.

\begin{lemma}
\label{probdecreaseinc}
    Let $\varphi$ be a random (possibly mixed) CNF formula with $m$~clauses and $n$~variables, and let $\mathcal L,\mathcal L^\prime\subseteq\pm[n]$ be two consistent sets of literals.
    \begin{enumerate}[label=(\roman*)]
        \item If $\lvert\mathcal L\rvert=\lvert\mathcal L^\prime\rvert$, then $\prob(\varphi_\mathcal L\in\SAT)=\prob(\varphi_{\mathcal L^\prime}\in\SAT)$.
        \item If $\lvert\mathcal L\rvert\geq\lvert\mathcal L^\prime\rvert$, then $\prob(\varphi_\mathcal L\in\SAT)\leq\prob(\varphi_{\mathcal L^\prime}\in\SAT)$.
    \end{enumerate}
\end{lemma}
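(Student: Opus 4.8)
The plan is to exploit the symmetry of the random model under relabelling and sign-flipping of variables to reduce everything to counting, and then to handle the monotonicity part by a coupling/containment argument.

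\textbf{Part (i): reduction to $\lvert\mathcal L\rvert$.} The key observation is that the distribution of a random $k$-CNF formula $\varphi$ with $m$ clauses and $n$ variables is invariant under the group $G$ of transformations of the literal set $\pm[n]$ generated by (a) permutations $\pi\in S_n$ of the variable indices, acting by $l\mapsto\sgn(l)\,\pi(\lvert l\rvert)$, and (b) sign flips $l\mapsto-l$ applied coordinatewise to any subset of variables. Indeed, each clause $C_j$ is obtained by choosing a uniformly random $k$-subset of variables, a uniformly random ordering, and i.i.d.\ fair signs; applying any $g\in G$ to all literals of $\varphi$ permutes the sample space of each $C_j$ bijectively and preserves the uniform measure, so $g\cdot\varphi\overset{d}{=}\varphi$. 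Moreover, fixing variables commutes with this action: one checks directly from the definitions~\eqref{Bdefn}--\eqref{fixvardefn} that $(g\cdot\varphi)_{g\cdot\mathcal L}$ and $g\cdot(\varphi_\mathcal L)$ have the same solution space up to the induced relabelling of $\{-1,1\}^n$, hence $\varphi_\mathcal L\in\SAT\iff(g\cdot\varphi)_{g\cdot\mathcal L}\in\SAT$. Now, if $\lvert\mathcal L\rvert=\lvert\mathcal L^\prime\rvert$, then since both are consistent (at most one of $v,-v$ in each), there is some $g\in G$ with $g\cdot\mathcal L=\mathcal L^\prime$: first use a sign flip on the variables where $\mathcal L$ and $\mathcal L^\prime$ disagree in sign on their common support, then use a permutation to move $\mathcal L_\textup{abs}$ onto $\mathcal L^\prime_\textup{abs}$ (possible as they have equal cardinality). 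Combining,
\begin{equation*}
    \prob(\varphi_{\mathcal L^\prime}\in\SAT)=\prob\bigl((g\cdot\varphi)_{g\cdot\mathcal L}\in\SAT\bigr)=\prob\bigl(g\cdot(\varphi_\mathcal L)\in\SAT\bigr)=\prob(\varphi_\mathcal L\in\SAT),
\end{equation*}
where the last equality uses $g\cdot\varphi\overset{d}{=}\varphi$.

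\textbf{Part (ii): monotonicity.} By part (i) it suffices to treat the case $\mathcal L^\prime\subseteq\mathcal L$ (enlarge $\mathcal L^\prime$ to any consistent superset of the same size contained in a set equinumerous with $\mathcal L$; more simply, pick representatives so that $\mathcal L^\prime\subseteq\mathcal L$ and both have the prescribed sizes, using (i) to change $\mathcal L,\mathcal L^\prime$ freely). When $\mathcal L^\prime\subseteq\mathcal L$ we have $B(\mathcal L)\subseteq B(\mathcal L^\prime)$ coordinatewise from~\eqref{Bdefn}, hence $\varphi_\mathcal L=\varphi\vert_{B(\mathcal L)}$ is a restriction of $\varphi_{\mathcal L^\prime}=\varphi\vert_{B(\mathcal L^\prime)}$, so $\mathsf{SOL}(\varphi_\mathcal L)\subseteq\mathsf{SOL}(\varphi_{\mathcal L^\prime})$ and therefore $\varphi_\mathcal L\in\SAT\implies\varphi_{\mathcal L^\prime}\in\SAT$ pointwise (deterministically, for every realization of $\varphi$). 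Taking probabilities gives $\prob(\varphi_\mathcal L\in\SAT)\leq\prob(\varphi_{\mathcal L^\prime}\in\SAT)$.

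\textbf{Expected main obstacle.} The genuinely substantive point is verifying that the $G$-action commutes with the variable-fixing operation in the precise sense needed (that the events $\{\varphi_\mathcal L\in\SAT\}$ and $\{(g\cdot\varphi)_{g\cdot\mathcal L}\in\SAT\}$ coincide as events, i.e.\ for each $\omega$), since this is where the interplay between the combinatorial definition of $h_\mathcal L$ in~\eqref{fixvardefn} and the relabelling must be pinned down carefully; everything else is bookkeeping. A minor subtlety is making sure the representative sets in part (ii) can simultaneously be chosen nested and of the right cardinalities, which is immediate once (i) is in hand.
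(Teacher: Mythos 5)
Your proof is correct and takes essentially the same approach as the paper: both exploit the invariance of the random model under variable relabellings combined with per-variable sign flips (the paper makes the group element concrete via the triple $(\pi,\theta,\theta^\prime)$ and verifies the key commutation $(g\cdot\varphi)_{g\cdot\mathcal L}\in\SAT\iff\varphi_\mathcal L\in\SAT$ by the two-case computation you defer to ``bookkeeping''), and for (ii) both reduce to nested sets and use the inclusion $B(\mathcal L)\subseteq B(\mathcal L^\prime)$. One small slip in your description of $g$: flipping signs on the common support \emph{before} permuting does not in general yield $g\cdot\mathcal L=\mathcal L^\prime$ (e.g.\ $\mathcal L=\{1,2\}$, $\mathcal L^\prime=\{1,-3\}$); the correct order is to permute $\mathcal L_\textup{abs}$ onto $\mathcal L^\prime_\textup{abs}$ first and then apply sign flips on $\mathcal L^\prime_\textup{abs}$ to fix the signs, which is exactly what the paper's formula $S^\prime_{j,i}=S_{j,i}\theta(V_{j,i})\theta^\prime(V^\prime_{j,i})$ encodes.
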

\begin{proof}
    Let $L_{j,i}=S_{j,i}V_{j,i}$ for $j\in[m]$ and $i\in[k_j]$ be the literals defining~$\varphi$. Assume that $\lvert\mathcal L\rvert=\lvert\mathcal L^\prime\rvert$ and let $\pi:[n]\to[n]$ be a permutation such that $\pi(\mathcal L_\textup{abs})=\mathcal L^\prime_\textup{abs}$. Define $\theta,\theta^\prime:[n]\to\{-1,1\}^n$ by putting
    \begin{equation*}
        \theta(v)\coloneqq
        \begin{cases}
            -1,&\text{if $-v\in\mathcal L$,} \\
            1,&\text{otherwise},
        \end{cases}
        \quad\text{and similarly}\quad\theta^\prime(v)\coloneqq
        \begin{cases}
            -1,&\text{if $-v\in\mathcal L^\prime$,} \\
            1,&\text{otherwise}.
        \end{cases}
    \end{equation*}
    This definition is such that if $l\in\mathcal L$ then $\theta(\lvert l\rvert)=\sgn(l)$ and similarly for~$\theta^\prime$. Now define
    \begin{equation*}
        V^\prime_{j,i}\coloneqq\pi(V_{j,i}),\quad S^\prime_{j,i}\coloneqq S_{j,i}\theta(V_{j,i})\theta^\prime(V^\prime_{j,i}),\quad\text{and}\quad L^\prime_{j,i}\coloneqq S^\prime_{j,i}V^\prime_{j,i}
    \end{equation*}
    for all $j\in[m]$ and $i\in[k]$. Then by direct calculation we find that the random vectors $(L^\prime_{j,i})_{j,i}$ and $(L_{j,i})_{j,i}$ have the same distribution, so $\varphi$ and $\varphi^\prime$ have the same distribution, where $\varphi^\prime$ is the random CNF formula defined from the random literals $L^\prime_{j,i}$ for $j\in[m]$ and $i\in[k_j]$. This implies
    \begin{equation*}
        \prob(\varphi_{\mathcal L^\prime}\in\SAT)=\prob(\varphi^\prime_{\mathcal L^\prime}\in\SAT).
    \end{equation*}
    We now show that
    \begin{equation*}
        \varphi^\prime_{\mathcal L^\prime}\in\SAT\iff\varphi_\mathcal L\in\SAT.
    \end{equation*}
    This will follow from the fact that $(L_{j,i})_\mathcal L(x)=(L^\prime_{j,i})_{\mathcal L^\prime}(x^\prime)$ for all $j\in[m]$, $i\in[k]$, and $x\in\{-1,1\}^n$, where $x^\prime\in\{-1,1\}^n$ is given by $x^\prime_v=x_{\pi^{-1}(v)}$. There are two cases:
    \begin{enumerate}[label=(\arabic*)]
        \item If $V_{j,i}\notin\mathcal L_\textup{abs}$, then $V^\prime_{j,i}\notin\mathcal L^\prime_\textup{abs}$, and in this case $\theta(V_{j,i})=\theta^\prime(V^\prime_{j,i})=1$, yielding
        \begin{equation*}
            (L^\prime_{j,i})_{\mathcal L^\prime}(x^\prime)=S^\prime_{j,i}x^\prime_{V^\prime_{j,i}}=S_{j,i}x_{\pi^{-1}(V^\prime_{j,i})}=S_{j,i}x_{V_{j,i}}=(L_{j,i})_\mathcal L(x).
        \end{equation*}
        \item If $V_{j,i}\in\mathcal L_\textup{abs}$, then $V^\prime_{j,i}\in\mathcal L^\prime_\textup{abs}$, and by construction we get $B_{V_{j,i}}(\mathcal L)=\{\theta(V_{j,i})\}$ and $B_{V^\prime_{j,i}}(\mathcal L^\prime)=\{\theta^\prime(V^\prime_{j,i})\}$ (see~\eqref{Bdefn}). This yields (see~\eqref{fixvardefn})
        \begin{equation*}
            (L^\prime_{j,i})_{\mathcal L^\prime}(x^\prime)=S^\prime_{j,i}\theta^\prime(V^\prime_{j,i})=S_{j,i}\theta(V_{j,i})\theta^\prime(V^\prime_{j,i})^2=S_{j,i}\theta(V_{j,i})=(L_{j,i})_\mathcal L(x).
        \end{equation*}
    \end{enumerate}
    All together we find that
    \begin{equation*}
        \prob(\varphi_{\mathcal L^\prime}\in\SAT)=\prob(\varphi^\prime_{\mathcal L^\prime}\in\SAT)=\prob(\varphi_\mathcal L\in\SAT),
    \end{equation*}
    proving (i).
    
    Now for (ii), if $\lvert\mathcal L\rvert\geq\lvert\mathcal L^\prime\rvert$, we consider an injection $\iota:\mathcal L^\prime\to\mathcal L$. Then it is clear that $\varphi_\mathcal L\in\SAT$ implies $\varphi_{\iota(\mathcal L^\prime)}\in\SAT$ (since $\iota(\mathcal L^\prime)\subseteq\mathcal L$), and the result follows from (i) since $\lvert\iota(\mathcal L^\prime)\rvert=\lvert\mathcal L^\prime\rvert$.
\end{proof}

The next lemma shows that adding more clauses to~$\varphi$ only decreases the probability of satisfiability for~$\varphi_\mathcal L$.

\begin{lemma}
\label{probdecreaseinm}
    Let $\varphi$ and $\varphi^\prime$ be random (possibly mixed) CNF formulas with $m_k$ resp.\ $m_k^\prime$ $k$-clauses for each~$k$ and $n$~variables, and let $\mathcal L\subseteq\pm[n]$ be a consistent set of literals. If $m_k\geq m^\prime_k$ for each $k$, then
    \begin{equation*}
        \prob(\varphi_\mathcal L\in\SAT)\leq\prob(\varphi^\prime_\mathcal L\in\SAT),
    \end{equation*}
    and in particular $\prob(\varphi\in\SAT)\leq\prob(\varphi^\prime\in\SAT)$.
\end{lemma}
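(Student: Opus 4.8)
The plan is to deduce the inequality from a monotone coupling in which $\varphi'$ appears as a sub-conjunction of $\varphi$; once this is arranged, the statement reduces to the trivial fact that adjoining more clauses to a CNF formula can only shrink its solution space.

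First I would construct the coupling on a common probability space. For each clause length $k$ let $(C^{(k)}_j)_{j\geq 1}$ be an i.i.d.\ sequence of random $k$-clauses, each sampled as in the definition of a random CNF formula, and take these sequences to be mutually independent across~$k$. Set $\varphi$ equal to the conjunction of the clauses $C^{(k)}_j$ over all $k$ and all $1\leq j\leq m_k$, and set $\varphi'$ equal to the conjunction of the $C^{(k)}_j$ over all $k$ and all $1\leq j\leq m'_k$. Since the clauses are i.i.d.\ within each length class and independent across classes, $\varphi$ and $\varphi'$ have exactly the laws of the random mixed CNF formulas in the statement, so it suffices to prove the inequality on this coupled space. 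As $m_k\geq m'_k$ for every $k$, every clause of $\varphi'$ is also a clause of $\varphi$; writing $\psi$ for the conjunction of the remaining clauses $C^{(k)}_j$ with $m'_k<j\leq m_k$ (with $\psi\equiv 1$ if there are none), we obtain the pointwise identity $\varphi=\varphi'\wedge\psi$.

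Next I would push the restriction $(\cdot)_\mathcal L$ through this conjunction. Restricting a Boolean function to $B(\mathcal L)$ commutes with taking minima, so $\varphi_\mathcal L=\varphi'_\mathcal L\wedge\psi_\mathcal L$ and hence $\mathsf{SOL}(\varphi_\mathcal L)=\mathsf{SOL}(\varphi'_\mathcal L)\cap\mathsf{SOL}(\psi_\mathcal L)\subseteq\mathsf{SOL}(\varphi'_\mathcal L)$. In particular $\varphi_\mathcal L\in\SAT$ implies $\varphi'_\mathcal L\in\SAT$, i.e.\ $\{\varphi_\mathcal L\in\SAT\}\subseteq\{\varphi'_\mathcal L\in\SAT\}$ as events on the coupled space, whence $\prob(\varphi_\mathcal L\in\SAT)\leq\prob(\varphi'_\mathcal L\in\SAT)$. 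For the final assertion I would specialize to $\mathcal L=\emptyset$, for which $B(\emptyset)=\{-1,1\}^n$ and $h_\emptyset=h$, giving $\prob(\varphi\in\SAT)\leq\prob(\varphi'\in\SAT)$.

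I do not expect a real obstacle here: this is a routine monotonicity argument, and the only two points that deserve a line of justification are (i) that the coupled pair $(\varphi,\varphi')$ has the correct marginal distributions, which is precisely why the sequences $(C^{(k)}_j)_j$ are taken i.i.d.\ and mutually independent across~$k$, and (ii) that $(\cdot)_\mathcal L$ distributes over conjunction, which is immediate from $h_\mathcal L=h|_{B(\mathcal L)}$ together with $\mathsf{SOL}(g\wedge h)=\mathsf{SOL}(g)\cap\mathsf{SOL}(h)$.
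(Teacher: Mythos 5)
Your proposal is correct and uses essentially the same coupling argument as the paper: both constructions realize $\varphi$ and $\varphi'$ on a common probability space from independent i.i.d.\ sequences of random $k$-clauses, observe that $\varphi'$ is a sub-conjunction of $\varphi$ so that $\mathsf{SOL}(\varphi_\mathcal L)\subseteq\mathsf{SOL}(\varphi'_\mathcal L)$, and specialize to $\mathcal L=\emptyset$ for the final assertion.
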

\begin{proof}
    The result follows from a straightforward coupling argument; let $C^{(k)}_j$ be independent random $k$-clauses for $j,k\in\mathbb N$ (also independent across different values of~$k$), and define
    \begin{equation*}
        \psi\coloneqq\min_{k\in\mathbb N}\min_{j\in[m_k]}C^{(k)}_j,\quad\text{and}\quad\psi^\prime\coloneqq\min_{k\in\mathbb N}\min_{j\in[m^\prime_k]}C^{(k)}_j.
    \end{equation*}
    Then $\psi$ has the same distribution as~$\varphi$ and $\psi^\prime$ has the same distribution as~$\varphi^\prime$, but it holds by construction that, for every $x\in\{-1,1\}^n$, if $\psi(x)=1$, then $\psi^\prime(x)=1$, and thus also $\psi_\mathcal L(x)=1$ implies $\psi^\prime_\mathcal L(x)=1$. This yields
    \begin{equation*}
        \prob(\varphi_\mathcal L\in\SAT)=\prob(\psi_\mathcal L\in\SAT)\leq\prob(\psi^\prime_\mathcal L\in\SAT)=\prob(\varphi^\prime_\mathcal L\in\SAT).
    \end{equation*}
    The second assertion follows by taking $\mathcal L=\emptyset$.
\end{proof}

This next lemma describes the distribution of $(\varphi_0,\varphi_1,\varphi_2)$ and $(\varphi_0,\varphi_1,\varphi_2,\varphi_3)$, the $0$-, $1$-, $2$-CNF (and $3$-CNF) subformulas of $\varphi_\mathcal L$ defined in~\eqref{defnpsik}, when $\varphi$ is a random $2$- or $3$-CNF formula.

\begin{lemma}
\label{distofphiandm}
    Let $k^\prime=2$ or $k^\prime=3$ and let $K=\{ 0,1,\dots,k^\prime,\star\}$. For $m,n\in\mathbb N$ and $n^\prime\in\mathbb N_0$ with $2\leq n^\prime<n$, let $\varphi$ be a random $k^\prime$-CNF formula with $m$ clauses and $n$ variables, and let $\mathcal L=[n]\setminus[n-n^\prime]$. Let $(\mathcal C_k)_{k\in K}$ and $(\varphi_k)_{k\in K}$ be as defined in~\eqref{defnmk} and~\eqref{defnpsik}, so in particular $\varphi_\mathcal L=\varphi_0\land\dots\land\varphi_{k^\prime}$. Let furthermore $M=(M_k)_{k\in K}$ be a partition of~$[m]$.
    
    Then $(\lvert\mathcal C_k\rvert)_{k\in K}\sim\mathrm{Multinomial}(m,(p_k)_{k\in K})$, and in the conditional distribution given $(\mathcal C_k)_{k\in K}=M$, it holds that the $\varphi_k$'s are independent and that $\varphi_k$ is a random $k$-CNF formula with $\lvert M_k\rvert$ clauses and $n-n^\prime$ variables for all $k\in K$.
    
    Furthermore, if $k^\prime=2$, then
    \begin{equation*}
        p_0=\frac{n^\prime(n^\prime-1)}{4n(n-1)},\quad p_1=\frac{n^\prime (n-n^\prime)}{n(n-1)},\quad p_2=\frac{(n-n^\prime)(n-n^\prime-1)}{n(n-1)},
    \end{equation*}
    and if $k^\prime=3$, then
    \begin{align*}
        p_0&=\frac{n^\prime(n^\prime-1)(n^\prime-2)}{8n(n-1)(n-2)},& p_1&=\frac{3n^\prime (n^\prime-1)(n-n^\prime)}{4n(n-1)(n-2)}, \\
        p_2&=\frac{3n^\prime (n-n^\prime)(n-n^\prime-1)}{2n(n-1)(n-2)},& p_3&=\frac{(n-n^\prime)(n-n^\prime-1)(n-n^\prime-2)}{n(n-1)(n-2)}.
    \end{align*}
\end{lemma}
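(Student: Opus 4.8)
The plan is to reduce everything to the facts that the clauses $C_1,\dots,C_m$ are i.i.d.\ and that the fate of a single clause is governed by one explicit counting computation. Since every element of $\mathcal L=[n]\setminus[n-n']$ is positive, fixing the variables dictated by $\mathcal L$ amounts to setting $x_v=1$ for all $v\in\{n-n'+1,\dots,n\}$. For each $j$, both the \emph{type} $T_j\in K$ (the case $k$ into which $C_j$ falls, cf.\ \eqref{defnmk}, so that $j\in\mathcal C_k\iff T_j=k$) and the fixed clause $D_j\coloneqq(C_j)_\mathcal L$ are deterministic functions of the pair $(V_j,S_j)$; hence the pairs $(T_1,D_1),\dots,(T_m,D_m)$ are i.i.d. Writing $p_k\coloneqq\prob(T_1=k)$, the vector $(\lvert\mathcal C_k\rvert)_{k\in K}=(\#\{j\in[m]:T_j=k\})_{k\in K}$ is then $\mathrm{Multinomial}(m,(p_k)_{k\in K})$ by the elementary description of the counts of i.i.d.\ categorical variables, so it remains only to identify the conditional law of $D_1$ given $T_1=k$ and to evaluate $p_k$.

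For the core computation I would fix $k\in\{1,\dots,k'\}$ and a clause $D$ of length $k$ on the variable set $[n-n']$, and count the pairs $(V_1,S_1)$ for which $T_1=k$ and $D_1=D$. Since the free variables of $C_1$ form a sub-tuple of the pairwise distinct tuple $V_1$, the clause $D_1$ always has distinct variables and is unambiguously identified with a formal clause, so this counting is legitimate. Now $T_1=k$ (for $1\le k\le k'$) is exactly the requirement that $k'-k$ of the $k'$ positions carry a variable in $\{n-n'+1,\dots,n\}$ with sign $-1$ while the other $k$ positions carry distinct variables in $[n-n']$; given this, $D_1=D$ pins the $k$ free literals, as a set, to be the literals of $D$. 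Counting gives $\binom{k'}{k}$ choices of the free positions, $k!$ orderings of the literals of $D$ along them (which also determines the free signs), and $n'(n'-1)\cdots(n'-k'+k+1)$ ordered choices of the $k'-k$ fixed variables, all with sign $-1$; dividing by the total number $2^{k'}n(n-1)\cdots(n-k'+1)$ of pairs $(V_1,S_1)$ yields a quantity independent of $D$. I would conclude that conditionally on $T_1=k$ the clause $D_1$ is uniform over the $2^k\binom{n-n'}{k}$ clauses of length $k$ on $[n-n']$, i.e.\ a uniformly random $k$-clause in $n-n'$ variables, and that summing the joint probability over all such $D$ (using $k!\binom{n-n'}{k}=(n-n')(n-n'-1)\cdots(n-n'-k+1)$) gives $p_k=\binom{k'}{k}(n-n')(n-n'-1)\cdots(n-n'-k+1)\cdot n'(n'-1)\cdots(n'-k'+k+1)/\bigl(2^{k'-k}n(n-1)\cdots(n-k'+1)\bigr)$, which specialises to the displayed formulas for $k'=2,3$. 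The two degenerate values are immediate: on $\{T_1=0\}$ all $k'$ positions are fixed with sign $-1$, so $D_1\equiv-1$ and $p_0$ is the same expression with $k=0$; on $\{T_1=\star\}$ one has $D_1\equiv 1$, and $p_\star=1-\sum_{k\ne\star}p_k$.

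Finally, to obtain the conditional structure I would observe that conditioning on $(\mathcal C_k)_{k\in K}=M$ is precisely conditioning on the event $\{(T_1,\dots,T_m)=(t_1,\dots,t_m)\}$, where $t_j$ is the unique $k$ with $j\in M_k$. By the independence of the pairs $(T_j,D_j)$, under this conditioning $D_1,\dots,D_m$ are independent with $D_j$ a uniformly random $t_j$-clause in $n-n'$ variables, by the previous paragraph. Hence $\varphi_k=\min_{j\in M_k}D_j$ (cf.\ \eqref{defnpsik}) is a conjunction of $\lvert M_k\rvert$ i.i.d.\ uniform $k$-clauses in $n-n'$ variables---that is, a random $k$-CNF formula with $\lvert M_k\rvert$ clauses and $n-n'$ variables---and the $\varphi_k$, $k\in K$, are independent because they are built from the disjoint blocks $(D_j)_{j\in M_k}$ of an independent family. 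The one place I expect to require care is the counting step: correctly isolating the event $\{T_1=k\}$, reconciling the unordered clause $D$ with the ordered tuple $V_1$ via the factor $k!$, and separately treating the degenerate cases $k\in\{0,\star\}$; everything else is a routine consequence of the i.i.d.\ structure of the $C_j$.
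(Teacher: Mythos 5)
Your proof is correct and follows essentially the same route as the paper's: both reduce the claim to the i.i.d.\ structure of the clauses, classify each clause by its fate under fixing, and compute the $p_k$ by direct counting, then use independence across clauses to get the conditional product structure. You package the argument via the type-and-fixed-clause pairs $(T_j,D_j)$ and a combinatorial count over $(V_1,S_1)$, while the paper works at the level of the literal tuples and partitions the index set by sets $A_k\subseteq(\pm[n])^{k'}$, but the mathematical substance is identical.
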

\begin{proof}
    Assume initially that $k^\prime=2$ and let $L=(L_{j,i})_{j\in[m],i\in[2]}$ be the random literals defining~$\varphi$. Notice that $[n]\setminus\mathcal L=[n-n^\prime]$, and define the sets
    \begin{gather*}
        A_0\coloneqq-\mathcal L\times-\mathcal L,\quad A_1\coloneqq(-\mathcal L\times\pm[n-n^\prime])\cup(\pm[n-n^\prime]\times-\mathcal L), \\
        A_2\coloneqq\pm[n-n^\prime]\times\pm[n-n^\prime],\quad A_\star\coloneqq(\mathcal L\times\pm[n])\cup(\pm[n]\times\mathcal L).
    \end{gather*}
    Notice for each $k\in K$ that
    \begin{equation*}
        \mathcal C_k=\big\{\, j\in[m]\colon (L_{j,1},L_{j,2})\in A_k\,\big\},
    \end{equation*}
    where the sets $\mathcal C_k$ are those defined in~\eqref{defnmk}. Since $(A_k)_{k\in K}$ is a partition of $\pm[n]\times\pm[n]$, it is clear that $(\lvert\mathcal C_k\rvert)_{k\in K}$ follows a multinomial distribution. Now,
    \begin{equation*}
        p_k=\frac{1}{m}\mean\bigl[\lvert\mathcal C_k\rvert\bigr]=\frac{1}{m}\mean\biggl[\sum_{j=1}^m 1_{\{ (L_{j,1},L_{j,2})\in A_k\}}\biggr]=\prob\bigl((L_{1,1},L_{1,2})\in A_k\bigr).
    \end{equation*}
    From the known distribution of the random literals $(L_{1,1},L_{1,2})$, we find by direct calculations the values of $p_0,p_1,p_2$ stipulated.
    
    Put $\mathcal C\coloneqq(\mathcal C_k)_{k\in K}$, and let $M=(M_k)_{k\in K}$ be a partition of $[m]$. For any $l_{j,i}\in\pm[n]$, $j\in[m]$ and $i=1,2$, we immediately find by independence of the $(L_{j,1},L_{j,2})$ pairs that
    \begin{equation*}
        \prob\bigl(L=(l_{j,i})_{j\in[m],i\in[2]}\bigm\vert\mathcal C=M\bigr)=\prod_{k\in K}\prod_{j\in M_k}\prob\bigl((L_{j,1},L_{j,2})=(l_{j,1},l_{j,2})\bigm\vert (L_{j,1},L_{j,2})\in A_k\bigr),
    \end{equation*}
    so the pairs $(L_{j,1},L_{j,2})$ for $j\in[m]$ are conditionally independent given $\mathcal C=M$. We further find by direct calculation that for $j\in M_2$ and $(l_{j,1},l_{j,2})\in A_2$ with $\lvert l_{j,1}\rvert\neq\lvert l_{j,2}\rvert$,
    \begin{equation*}
        \prob\bigl((L_{j,1},L_{j,2})=(l_{j,1},l_{j,2})\bigm\vert (L_{j,1},L_{j,2})\in A_2\bigr)=\frac{1}{4n(n-1)p_2}=\frac{1}{4(n-n^\prime)(n-n^\prime-1)},
    \end{equation*}
    showing that $(L_{j,1},L_{j,2})$ is uniformly distributed over all pairs of strictly distinct literals from $\pm[n-n^\prime]$ given $\mathcal C=M$. Also, $(L_{j,1},L_{j,2})\in A_2$ is ``case 2'' described above~\eqref{defnmk}, so $(L_{j,i})_\mathcal L=L_{j,i}$ for $i=1,2$ almost surely given $\mathcal C=M$. Recalling that
    \begin{equation*}
        \varphi_k(x)=\min_{j\in\mathcal C_k}\bigl((L_{j,1})_\mathcal L(x)\lor (L_{j,2})_\mathcal L(x)\bigr),\quad (x\in\{-1,1\}^n),
    \end{equation*}
    for $k=0,1,2$, it follows that in the conditional distribution given $\mathcal C=M$, $\varphi_2$ is indeed a random $2$-CNF formula (defined from the random literals $(L_{j,i})_{j\in M_k,i\in[2]}$). An analogous argument shows that given $\mathcal C=M$, $\varphi_1$ is a random $1$-CNF formula. By independence of the $(L_{j,1},L_{j,2})$ pairs in the conditional distribution, $\varphi_0$, $\varphi_1$ and $\varphi_2$ are independent given $\mathcal C=M$.

    Completely analogous considerations establish the result for $k^\prime=3$.
\end{proof}

The final lemma in this section concerns the amount of duplicate literals in a random $1$-CNF formula. We show that there are loosely speaking very few duplicates if the number of clauses/literals is proportional to the square root of the number of variables.

\begin{lemma}
\label{fewduplicateliterals}
    Let $n_0=n_0(n)$ and $f=f(n)$ be such that $f/\sqrt{n_0}\to\beta$, where $0\leq\beta<\infty$. Consider i.i.d.\ random literals $L_1,L_2,\dots,L_f$ uniformly distributed on $\pm[n_0]$. Put $\Lambda\coloneqq\{ L_1,\dots,L_f\}$. Then
    \begin{equation*}
        \prob\bigl(\lvert\Lambda\rvert<f-w\bigr)\leq\frac{4\beta^2}{w}
    \end{equation*}
    for all $w=w(n)$ when $n$ is large enough.
\end{lemma}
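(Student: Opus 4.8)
The plan is to apply Markov's inequality to the \emph{defect} $D\coloneqq f-\lvert\Lambda\rvert$, i.e.\ the number of literals among $L_1,\dots,L_f$ that repeat an earlier one, after bounding $\mean[D]$ by a pairwise collision count. The key point is not to try to control $\lvert\Lambda\rvert$ directly, but to estimate this defect.

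First I would record that $\{\lvert\Lambda\rvert<f-w\}=\{D>w\}$ and introduce the collision statistic $X\coloneqq\sum_{1\leq i<j\leq f}1_{\{L_i=L_j\}}$. Grouping the literals by value, if a given value of $\pm[n_0]$ occurs exactly $c\geq 1$ times among $L_1,\dots,L_f$, then it contributes $c-1$ to $D$ and $\binom{c}{2}$ to $X$; since $\binom{c}{2}\geq c-1$ for every integer $c\geq 1$, summing over all occurring values gives the pointwise inequality $D\leq X$. Hence, for $w>0$ (which we may assume, as otherwise the claim is vacuous), Markov's inequality yields $\prob(\lvert\Lambda\rvert<f-w)=\prob(D>w)\leq\prob(X>w)\leq\mean[X]/w$.

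Next I would compute $\mean[X]$ exactly. Because $L_i$ and $L_j$ are independent and uniform on the $2n_0$-element set $\pm[n_0]$, we have $\prob(L_i=L_j)=1/(2n_0)$, so $\mean[X]=\binom{f}{2}\cdot\frac{1}{2n_0}=\frac{f(f-1)}{4n_0}\leq\frac{f^2}{4n_0}$, giving $\prob(\lvert\Lambda\rvert<f-w)\leq\frac{f^2}{4n_0 w}$. Finally, invoking the hypothesis $f/\sqrt{n_0}\to\beta$, we get $f^2/n_0\to\beta^2$, so in particular $f^2/n_0\leq 16\beta^2$ once $n$ is large enough (the generous constant here is what produces the factor $4$ in the statement), and therefore $\frac{f^2}{4n_0 w}\leq\frac{4\beta^2}{w}$ for all such $n$, as claimed.

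As for obstacles, there is essentially none: this is a one-line first-moment estimate, and the only thing that must be chosen correctly is the statistic. One can equivalently avoid the combinatorial identity $\binom{c}{2}\geq c-1$ by writing $D=\sum_{j=2}^{f}1_{\{L_j\in\{L_1,\dots,L_{j-1}\}\}}$ and bounding each term's expectation by $(j-1)/(2n_0)$ via a union bound, arriving at the same estimate $\mean[D]\leq f(f-1)/(4n_0)$.
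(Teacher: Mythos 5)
Your argument is correct, and it is a genuinely different---and cleaner---route than the paper's. The paper proves the bound for $\lvert\Lambda_\textup{abs}\rvert$ (the number of distinct \emph{variables}, which lower-bounds $\lvert\Lambda\rvert$), writes $f-\lvert\Lambda_\textup{abs}\rvert=\sum_{v}(N_v-1)1_{\{N_v\geq 2\}}$ with $N_v$ the multiplicity of variable $v$, splits this into a $\{N_v\geq 3\}$ part handled via Cauchy--Schwarz and a $\{N_v=2\}$ part handled via a union bound, and only then applies Markov; the by-product~\eqref{drawatmosttwo} ($N_v\leq 2$ for all $v$ w.h.p.) is reused in the proof of Lemma~\ref{random1sat}, which likely explains why the paper works at the level of variable multiplicities. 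Your observation that the literal defect $D=f-\lvert\Lambda\rvert$ is pointwise dominated by the pairwise collision count $X=\sum_{i<j}1_{\{L_i=L_j\}}$ (via $\binom{c}{2}\geq c-1$ for integers $c\geq 1$) collapses the whole thing to a single first-moment computation, works directly with $\lvert\Lambda\rvert$ rather than the stronger quantity $\lvert\Lambda_\textup{abs}\rvert$, and in fact yields the sharper constant $\beta^2/w$ (you kept the slack $f^2/n_0\leq 16\beta^2$ only to reproduce the stated $4\beta^2/w$). One minor caveat, shared by your proof and the paper's: taken literally, the bound $4\beta^2/w=0$ for $\beta=0$ cannot hold when $f\geq 2$ and $w$ is small since the collision probability is positive; both arguments only give a bound of the form $\epsilon/w$ for large $n$ in that regime, but the paper's applications always have $\beta>0$ or send $w\to\infty$, so this is harmless.
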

\begin{proof}
    Let $V_j\coloneqq\lvert L_j\rvert$ for each $j\in[f]$, so that $\Lambda_\textup{abs}=\{ V_1,\dots,V_f\}$. Define for each $v\in[n_0]$
    \begin{equation*}
        N_v\coloneqq\bigl\lvert\bigl\{ j\in[f]\colon V_j=v\bigr\}\bigr\rvert.
    \end{equation*}
    Then $(N_1,\dots,N_{n_0})\sim\mathrm{Multinomial}(f,(1/n_0,\dots,1/n_0))$, and we see that
    \begin{equation*}
        \lvert\Lambda_\textup{abs}\rvert=f-\sum_{v\in[n_0]}(N_v-1)1_{\{ N_v\geq 2\}}.
    \end{equation*}
    We split the sum as follows:
    \begin{equation*}
        \sum_{v\in[n_0]}(N_v-1)1_{\{ N_v\geq 2\}}=\sum_{v\in[n_0]}(N_v-1)1_{\{ N_v\geq 3\}}+\sum_{v\in[n_0]}1_{\{ N_v=2\}}.
    \end{equation*}
    Cauchy-Schwarz yields
    \begin{equation*}
        \mean[(N_v-1)1_{\{ N_v\geq 3\}}]\leq\mean[N_v 1_{\{ N_v\geq 3\}}]\leq\mean[N_v^2]^{1/2}\prob(N_v\geq 3)^{1/2},
    \end{equation*}
    where
    \begin{equation*}
        \mean[N_v^2]=\frac{f(f-1)}{n_0^2}+\frac{f}{n_0}\leq\frac{f^2}{n_0^2}+\frac{f}{n_0}\leq 2\frac{f}{n_0},
    \end{equation*}
    where we in the end use that $f/n_0\to 0$ as $n\to\infty$, so $f/n_0\leq 1$ when $n$ is large enough, and thus $\mean[N_v^2]^{1/2}\leq\sqrt{2f}/\sqrt{n_0}$ when $n$ is large enough. For the second factor we notice that $N_v\geq 3$ if and only if there exists $j_1<j_2<j_3$ such that $V_{j_1}=V_{j_2}=V_{j_3}=v$, so that
    \begin{equation*}
        \prob(N_v\geq 3)=\sum_{j_1<j_2<j_3}\prob(V_{j_1}=V_{j_2}=V_{j_3}=v)=\binom{f}{3}\frac{1}{n_0^3}\leq\frac{f^3}{n_0^3},
    \end{equation*}
    since the $V_1,\dots,V_f$ are i.i.d.\ uniformly distributed on $[n_0]$. We note for later that, in particular,
    \begin{equation}
    \label{drawatmosttwo}
        \prob(N_v\geq 3\text{ for some $v\in[n_0]$})\leq\sum_{v\in[n_0]}\prob(N_v\geq 3)\leq\frac{f^3}{n_0^2}=\biggl(\frac{f}{\sqrt{n_0}}\biggr)^3\frac{1}{\sqrt{n_0}}\longrightarrow 0
    \end{equation}
    as $n\to\infty$, so that $N_v\leq 2$ for all $v\in[n_0]$ w.h.p. Together this shows that
    \begin{equation*}
        \sum_{v\in[n_0]}\mean[(N_v-1)1_{\{ N_v\geq 3\}}]\leq\sqrt{2}\frac{f^2}{n_0}=\sqrt{2}\biggl(\frac{f}{\sqrt{n_0}}\biggr)^2\longrightarrow\sqrt{2}\beta^2\quad\text{as $n\to\infty$},
    \end{equation*}
    so this sum is smaller than~$2\beta^2$ when $n$ is large enough. For the other sum we note that, as before, $N_v\geq 2$ if and only if there exists $j_1<j_2$ such that $V_{j_1}=V_{j_2}=v$, so by a similar argument,
    \begin{equation*}
        \mean\biggl[\sum_{v\in[n_0]}1_{\{ N_v=2\}}\biggr]=\sum_{v\in[n_0]}\prob(N_v\geq 2)\leq\frac{f^2}{n_0}=\biggl(\frac{f}{\sqrt{n_0}}\biggr)^2\longrightarrow\beta^2\quad\text{as $n\to\infty$}
    \end{equation*}
    so this sum is also smaller than~$2\beta^2$ when $n$ is large enough. Taken together:
    \begin{equation*}
        \mean\biggl[\sum_{v\in[n_0]}(N_v-1)1_{\{ N_v\geq 2\}}\biggr]\leq 4\beta^2
    \end{equation*}
    when $n$ is large enough, and thus by Markov's inequality:
    \begin{equation*}
        \prob\bigl(\lvert\Lambda_\textup{abs}\rvert<f-w\bigr)=\prob\biggl(\sum_{v\in[n_0]}(N_v-1)1_{\{ N_v\geq 2\}}>w\biggr)\leq\frac{4\beta^2}{w}
    \end{equation*}
    when $n$ is large enough. As $\lvert\Lambda\rvert\geq\lvert\Lambda_\textup{abs}\rvert$, we are done.
\end{proof}

\subsection{Special cases}
We need to prove two special cases of our main result, which we will need for the proof itself. Now, we will actually prove our main result in a more general form than stated in Theorem~\ref{mainthm}. Namely, we let our random $2$- or $3$-CNF formula $\varphi$ have any number of variables $n_0=n_0(n)$, as long as $n_0(n)\to\infty$ when $n\to\infty$, as opposed to $\varphi$ having exactly $n$ variables. The assumption on the number of clauses~$m$ then becomes $m/n_0\to\alpha$ and the number of variables fixed~$f$ becomes $f\sqrt{m}/n_0\to\gamma$ or $fm^{1/3}/n_0\to\gamma$ respectively. We shall for convenience also employ this small increase in generality for the results of this section (as we have already done in Lemma~\ref{fewduplicateliterals}), but it does not in any way change the proofs (it simply amounts to exchanging some $n$'s for~$n_0$'s), and for this reason we shall abuse notation by writing $n\coloneqq n_0$.

The characterization of the random $1$-SAT problem~\eqref{1satproblem} is the first special case which shall actually play an important role in the proof of Theorem~\ref{mainthm}.

\begin{lemma}[The random $1$-SAT problem]
\label{random1sat}
    For $0\leq\beta\leq\infty$, let $\lambda$ be a random $1$-CNF formula with $f=f(n)$ clauses and $n_0=n_0(n)$ variables, where $n_0\to\infty$ and $f/\sqrt{n_0}\to\beta$. Then
    \begin{equation*}
        \lim_{n\to\infty}\prob(\lambda\in\SAT)=e^{-(\beta/2)^2}.
    \end{equation*}
\end{lemma}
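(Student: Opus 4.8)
\emph{Plan.} The formula $\lambda=\min\{L_1,\dots,L_f\}$ is satisfiable if and only if the random set of literals $\{L_1,\dots,L_f\}$ is consistent, i.e.\ there is no $v\in[n_0]$ for which both $v$ and $-v$ occur among the $L_j$. I would first condition on the absolute values $V_j\coloneqq\lvert L_j\rvert$. Given $(V_1,\dots,V_f)$, the signs $S_j=\sgn(L_j)$ are i.i.d.\ fair and independent of the $V_j$, so for each $v$ the event ``all literals with $V_j=v$ share a sign'' has conditional probability $2^{-(N_v-1)}$ whenever $N_v\coloneqq\lvert\{j:V_j=v\}\rvert\geq1$, and these events are independent across $v$ since they involve disjoint blocks of signs. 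Writing $R\coloneqq\lvert\{v:N_v\geq1\}\rvert$ and using $f-R=\sum_v(N_v-1)^+$, this gives the identity
\[
    \prob(\lambda\in\SAT)=\mean\Bigl[\,\prod_{v:N_v\geq1}2^{-(N_v-1)}\,\Bigr]=\mean\bigl[2^{-(f-R)}\bigr],
\]
turning the problem into a ``balls in bins'' problem: throw $f$ balls into $n_0$ bins uniformly and understand $f-R$.

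Assume first $\beta<\infty$. By a first-moment bound exactly as in the proof of Lemma~\ref{fewduplicateliterals} (see~\eqref{drawatmosttwo}), with high probability every bin receives at most two balls; on that event $f-R$ equals the collision count $D\coloneqq\lvert\{v:N_v=2\}\rvert$. Since $2^{-(f-R)}\leq1$, the contribution of the exceptional event is $o(1)$, so $\prob(\lambda\in\SAT)=\mean[2^{-D}]+o(1)$. The main step is the classical Poisson limit for the number of collisions in the birthday problem: $D=\sum_{v\in[n_0]}1_{\{N_v\geq2\}}$ converges in distribution to a Poisson random variable with mean $\mu\coloneqq\lim_n\binom{f}{2}/n_0=\beta^2/2$. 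I would prove this via convergence of factorial moments, $\mean[(D)_r]\to\mu^r$ for each $r$: by symmetry $\mean[(D)_r]=(n_0)_r\,\prob(N_1\geq2,\dots,N_r\geq2)$, and $\prob(N_1\geq2,\dots,N_r\geq2)=(1+o(1))\binom{f}{2}^r/n_0^{2r}$, the leading term coming from placing $r$ disjoint pairs of distinct balls into the $r$ bins and the overcounting/shared-ball corrections being of relative order $O(f/n_0)+O(1/n_0)=o(1)$; the crude uniform bound $\mean[(D)_r]\leq(f^2/2n_0)^r$ provides the summable domination needed to upgrade factorial-moment convergence to convergence of the probability generating function, $\mean[z^D]\to e^{\mu(z-1)}$ for $\lvert z\rvert\leq1$. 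Evaluating at $z=\tfrac12$ yields $\mean[2^{-D}]\to e^{-\mu/2}=e^{-\beta^2/4}=e^{-(\beta/2)^2}$, which with the previous display proves the lemma for all $0\leq\beta<\infty$ (the case $\beta=0$ being the degenerate limit $\mu=0$).

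For $\beta=\infty$ I would use a monotonicity squeeze. Fix an arbitrary finite $\beta'>0$ and set $f'\coloneqq\lceil\beta'\sqrt{n_0}\rceil$, so $f'\leq f$ for $n$ large and $f'/\sqrt{n_0}\to\beta'$. The first $f'$ literals form a random $1$-CNF formula $\lambda'$ with $f'$ clauses, and $\lambda\in\SAT\implies\lambda'\in\SAT$ (this is also an instance of Lemma~\ref{probdecreaseinm}); hence $\limsup_n\prob(\lambda\in\SAT)\leq\lim_n\prob(\lambda'\in\SAT)=e^{-(\beta'/2)^2}$ by the finite case already proved. Letting $\beta'\to\infty$ gives $\prob(\lambda\in\SAT)\to0=e^{-\infty}$, as required.

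The main obstacle is the Poisson limit for $D$: the combinatorial bookkeeping of the factorial moments and their error terms (or, alternatively, setting up a Chen--Stein coupling), together with the tail domination needed to pass from moments to the generating function; everything else is routine. I note there is a purely ``direct counting'' alternative: the same conditioning yields the exact identity $\prob(\lambda\in\SAT)=\frac{f!}{n_0^{f}}\,[x^{f}]\,(2e^{x/2}-1)^{n_0}$ by the exponential formula, since a bin holding $k$ literals carries weight $2^{-(k-1)}$ and hence has per-bin exponential generating function $1+2(e^{x/2}-1)=2e^{x/2}-1$; a saddle-point estimate at the saddle $x\approx f/n_0$ then again gives $e^{-\beta^2/4}$. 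This route trades the probabilistic Poisson estimate for an analytic one of comparable difficulty, so I would present the probabilistic argument as the main proof.
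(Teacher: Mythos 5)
Your proposal is correct and shares the paper's opening reduction: condition on the variable indices $V_j$, observe the conditional probability of satisfiability equals $2^{-(f-R)}$ with $R=\lvert\{V_1,\dots,V_f\}\rvert$, and truncate to the event that every $N_v\leq 2$ via the same first-moment estimate (cf.~\eqref{drawatmosttwo}), after which the problem becomes computing $\lim_n\mean[2^{-D}]$ for the collision count $D$. Where the routes diverge is in evaluating that limit. The paper writes the probability as an explicit sum $\sum_{k}\binom{n_0-k}{f-2k}\binom{n_0}{k}\frac{f!}{4^k n_0^f}$ over multiplicity profiles with exactly $k$ doubled entries and applies dominated convergence to the rearranged series, in effect deriving the Poisson limit for $D$ by hand in this specific setting. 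You instead invoke the classical Poisson approximation $D\Rightarrow\mathrm{Pois}(\beta^2/2)$ via factorial moments, upgrade to convergence of the probability generating function by a crude uniform moment bound, and evaluate at $z=\tfrac12$. Your version makes the Poisson structure (and the slick final evaluation $e^{\mu(z-1)}\vert_{z=1/2}=e^{-\beta^2/4}$) more visible, at the cost of the mildly delicate factorial-moment-to-PGF step, while the paper's version is more elementary and self-contained; both are at a comparable level of rigor, and the $\beta=\infty$ case is handled identically via Lemma~\ref{probdecreaseinm}. One small slip worth fixing: you define $D$ once as $\lvert\{v:N_v=2\}\rvert$ and once as $\sum_v 1_{\{N_v\geq 2\}}$; these coincide only on the truncation event, which suffices for your argument but should be stated consistently. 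Your closing remark, the exact exponential-formula identity $\prob(\lambda\in\SAT)=\frac{f!}{n_0^f}[x^f](2e^{x/2}-1)^{n_0}$, is a genuinely nice alternative not present in the paper.
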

\begin{proof}
    Assume initially that $\beta<\infty$. We abuse notation by writing $n\coloneqq n_0$. Let the random literals corresponding to $\lambda$ be $L_1,L_2,\dots,L_f$, where $S_j=\sgn(L_j)$ and $V_j=\lvert L_j\rvert$. For each $v\in[n]$ we put
    \begin{equation*}
        N_v\coloneqq\bigl\lvert\bigl\{ j\in[f]\colon V_j=v\bigr\}\bigr\rvert.
    \end{equation*}
    Then $N\coloneqq (N_1,\dots,N_n)\sim\mathrm{Multinomial}(f,(1/n,\dots,1/n))$. From~\eqref{drawatmosttwo} we have that $N_v\leq 2$ for all $v\in[n]$ w.h.p., so when writing
    \begin{equation*}
        \prob(\lambda\in\SAT)=\prob(\lambda\in\SAT,N_v\geq 3\text{ for some $v\in[n]$})+\prob(\lambda\in\SAT,N_v\leq 2\text{ for all $v\in[n]$}),
    \end{equation*}
    we see that the first term vanishes as $n\to\infty$, and in regards to the second term we have:
    \begin{equation*}
        \{ N_v\leq 2\text{ for all $v\in[n]$}\}=\bigcup_{\substack{\eta\in\{0,1,2\}^n\colon \\ \eta_1+\dots+\eta_n=f}}\{ N=\eta\}=\bigcup_{k=0}^{\lfloor f/2\rfloor}\bigcup_{\eta\in H_k}\{ N=\eta\},
    \end{equation*}
    where 
    \begin{equation*}
        H_k\coloneqq\bigl\{\,\eta\in\{0,1,2\}^n\colon\eta_1+\dots+\eta_n=f,\lvert\{ v\in[n]\colon\eta_v=2\}\rvert=k\,\bigr\},
    \end{equation*} 
    i.e.\ we have divided the union into sub-unions depending on how many entries of $\eta$ are equal to~$2$, so that $\eta\in H_k$ when there are $k$ entries of $\eta$ equal to~$2$. Since the events on the right-hand side are disjoint, we may write
    \begin{equation}
    \label{probphisatunit}
        \prob(\lambda\in\SAT,N_v\leq 2\text{ for all $v\in[n]$})=\sum_{k=0}^{\lfloor f/2\rfloor}\sum_{\eta\in H_k}\prob(\lambda\in\SAT,N=\eta).
    \end{equation}
    Notice that $\lvert H_k\rvert=\binom{n-k}{f-2k}\binom{n}{k}$, corresponding to firstly choosing $k$ entries of $\eta$ to equal~$2$ and then choosing $f-2k$ of the remaining $n-k$ entries to equal~$1$ (the remaining entries will equal~$0$).
    
    Now, for fixed $k\in\{ 0,1,\dots,\lfloor f/2\rfloor\}$ and $\eta\in H_k$ we find by definition of $N$ that $N=\eta$ if and only if $(V_1,\dots,V_f)=\underline v$ for some $\underline v\in[n]^f$ satisfying $\lvert\{ j\in[f]\colon\underline v_j=v\}\rvert=\eta_v$ for all $v\in[n]$. Hence,
    \begin{equation}
    \label{decompphisatx}
        \prob(\lambda\in\SAT,N=\eta)=\sum_{\substack{\underline v\in[n]^f\colon \\ \lvert\{ j\in[f]\colon\underline v_j=v\}\rvert=\eta_v \\ \text{for all $v\in[n]$}}}\prob(\lambda\in\SAT,V=\underline v),
    \end{equation}
    where $V\coloneqq (V_1,\dots,V_f)$. Notice now that $\lambda\in\SAT$ if and only if there are no contradictions in the literals $L_1,\dots,L_f$, i.e.\ we don't have $L_{j_1}=-L_{j_2}$ for any $j_1<j_2$, which is to say that $\lambda\in\SAT$ if and only if $V_{j_1}=V_{j_2}$ implies $S_{j_1}=S_{j_2}$. But since $V$ and $S\coloneqq (S_1,\dots,S_f)$ are independent, we find that:
    \begin{equation*}
        \prob(\lambda\in\SAT,V=\underline v)=\prob(\text{$S_{j_1}=S_{j_2}$ for all $j_1<j_2$ with $\underline v_{j_1}=\underline v_{j_2}$})\prob(V=\underline v)=\frac{1}{2^k}\prob(V=\underline v),
    \end{equation*}
    since there are exactly $k$ pairs $(j_1,j_2)$ satisfying $j_1<j_2$ and $\underline v_{j_1}=\underline v_{j_2}$ by choice of~$\eta$. Inserting this into~\eqref{decompphisatx} and using the known distribution of~$N$ together with the fact that exactly $k$ entries of $\eta$ are~$2$ and the rest are $1$ or~$0$, we get:
    \begin{equation*}
        \prob(\lambda\in\SAT,N=\eta)=\frac{1}{2^k}\prob(N=\eta)=\frac{f!}{4^k n^f},
    \end{equation*}
    and inserting this further into~\eqref{probphisatunit} yields:
    \begin{align}
        \MoveEqLeft\prob(\lambda\in\SAT,N_v\leq 2\text{ for all $v\in[n]$})=\sum_{k=0}^{\lfloor f/2\rfloor}\binom{n-k}{f-2k}\binom{n}{k}\frac{f!}{4^k n^f}\label{needforlater1sat} \\
        &=\sum_{k=0}^\infty\biggl(\prod_{d=0}^{2k-1}\frac{f-d}{\sqrt{n}}\biggr)\biggl(\prod_{d=0}^{f-k-1}\frac{n-d}{n}\biggr)\frac{1}{4^k k!}1_{[0,\lfloor f/2\rfloor]}(k),\label{finalsum1sat}
    \end{align}
    where we in the second equality simply have rearranged the factors in each term. For fixed~$k$ we see that
    \begin{equation*}
        \lim_{n\to\infty}\prod_{d=0}^{2k-1}\frac{f-d}{\sqrt{n}}=\prod_{d=0}^{2k-1}\lim_{n\to\infty}\biggl(\frac{f}{\sqrt{n}}-\frac{d}{\sqrt{n}}\biggr)=\beta^{2k},
    \end{equation*}
    and also for large enough~$n$ (independent of $k$) we have $f/\sqrt{n}\leq\beta+1$, so:
    \begin{equation}
    \label{boundfirstprod}
        \prod_{d=0}^{2k-1}\frac{f-d}{\sqrt{n}}\leq (\beta+1)^{2k}.
    \end{equation}
    In regards to the other product we use that $\log(1+t)/t\to 1$ as $t\to 0$, so that for any $0<\epsilon<1$ we have $(1-\epsilon)t\leq\log(1+t)\leq(1+\epsilon)t$ whenever $t$ is close enough to $0$. This yields
    \begin{equation*}
        (1-\epsilon)\sum_{d=0}^{f-k-1}\frac{-d}{n}\leq\sum_{d=0}^{f-k-1}\log\bigl(1-\tfrac{d}{n}\bigr)\leq(1+\epsilon)\sum_{d=0}^{f-k-1}\frac{-d}{n}
    \end{equation*}
    whenever $n$ is large enough, and since
    \begin{equation*}
        \sum_{d=0}^{f-k-1}\frac{-d}{n}=\frac{-1}{2}\cdot\frac{f-k-1}{\sqrt{n}}\cdot\frac{f-k}{\sqrt{n}}\longrightarrow\frac{-\beta^2}{2}\quad\text{as $n\to\infty$},
    \end{equation*}
    we get by taking $\epsilon\to 0$ that
    \begin{equation*}
        \lim_{n\to\infty}\prod_{d=0}^{f-k-1}\frac{n-d}{n}=\lim_{n\to\infty}\exp\biggl[\sum_{d=0}^{f-k-1}\log\bigl(1-\tfrac{d}{n}\bigr)\biggr]=e^{-\tfrac{1}{2}\beta^2}.
    \end{equation*}
    Of course, we also have for all $n\in\mathbb N$
    \begin{equation}
    \label{boundsecondprod}
        \prod_{d=0}^{f-k-1}\frac{n-d}{n}\leq 1
    \end{equation}
    Thus, by~\eqref{boundfirstprod} and~\eqref{boundsecondprod}, the $k$'th term in~\eqref{finalsum1sat} is bounded by $(\beta+1)^{2k}/(4^k k!)$ when $n$ is large enough (independent of $k$), so by appealing to dominated convergence we get
    \begin{equation*}
        \lim_{n\to\infty}\prob(\lambda\in\SAT)=e^{-\tfrac{1}{2}\beta^2}\sum_{k=0}^\infty\frac{\beta^{2k}}{4^k k!}=e^{-\tfrac{1}{4}\beta^2},
    \end{equation*}
    as claimed.

    Now, if $\beta=\infty$, we consider for an arbitrary $T>0$ a $1$-CNF formula $\psi$ with $\lfloor T\sqrt{n}\rfloor$ clauses and $n$ variables. Since $f/\sqrt{n}\to\infty$ as $n\to\infty$, we have for large enough~$n$ that $f\geq\lfloor T\sqrt{n}\rfloor$, so for such large~$n$ we also have by Lemma~\ref{probdecreaseinm}
    \begin{equation*}
        \prob(\lambda\in\SAT)\leq\prob(\psi\in\SAT),
    \end{equation*}
    and $\prob(\psi\in\SAT)\to e^{-(T/2)^2}$ as $n\to\infty$ by what we have proved thus far. Taking then $T\to\infty$ yields
    \begin{equation*}
        \lim_{n\to\infty}\prob(\lambda\in\SAT)=0
    \end{equation*}
    as desired.
\end{proof}

The second special case which we need to prove separately is the subcritical case $\beta=0$ in Theorem~\ref{mainthm} case $k=2$. To prove this, we employ a new variation of the classical ``snakes and snares'' proof of the sharp satisfiability threshold in the random $2$-SAT problem. In this variation, we show that a $2$-CNF formula that has its satisfiability spoiled by fixing some amount of variables must contain a ``cobra'', which we define shortly. We then show that the mean number of cobras in a random $2$-CNF formula vanishes if we fix asymptotically fewer variables than the square root of the total number of variables, i.e.\ if $\beta=0$.

Let $m,n,N\in\mathbb N$ and $\mathcal L\subseteq\pm[n]$ be a set of literals. Then an \emph{$\mathcal L$-cobra} (of size $N$) is a sequence $l_0,l_1,\dots,l_N\in\pm[n]$ of literals such that
\begin{enumerate}[label=(c\arabic*)]
    \item\label{c1} The variables $\lvert l_0\rvert,\lvert l_1\rvert,\dots,\lvert l_{N-1}\rvert$ are pairwise distinct,
    \item\label{c2} $\lvert l_0\rvert\in\mathcal L_\textup{abs}$,
    \item\label{c3} $\lvert l_N\rvert\in\mathcal L_\textup{abs}\cup\{\lvert l_0\rvert,\lvert l_1\rvert,\dots,\lvert l_{N-1}\rvert\}$.
\end{enumerate}

Let $\varphi$ be a (non-random) $2$-CNF formula with $n$ variables and $m$ clauses of the form $\ell_{j,1}\lor\ell_{j,2}$, $j\in[m]$. We say that $\varphi$ \emph{contains} the sequence $(l_0,l_1,\dots,l_N)$ if there for all $t\in[N]$ exists some $j\in[m]$ such that $\{-l_{t-1},l_t\}=\{\ell_{j,1},\ell_{j,2}\}$.

\begin{lemma}
\label{alphaisnull}
    For $0\leq\alpha<1$, let $\varphi$ be a random $2$-CNF formula with $m=m(n)$ clauses and $n_0=n_0(n)$ variables, where $n_0\to\infty$ and $m/n_0\to\alpha$, and let $\mathcal L\subseteq\pm[n_0]$ be a consistent set of literals with $\lvert\mathcal L\rvert=f=f(n)$, where $f/\sqrt{n_0}\to 0$ as $n\to\infty$. Then it holds that
    \begin{equation*}
        \lim_{n\to\infty}\prob(\varphi_\mathcal L\in\SAT)=1.
    \end{equation*}
\end{lemma}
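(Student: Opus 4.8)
The plan is to reduce the event that $\varphi_\mathcal{L}$ is unsatisfiable to the presence of a rare combinatorial structure in $\varphi$ --- a cobra --- and then kill that structure by a first-moment estimate, exactly as the preceding definitions of ``$\mathcal{L}$-cobra'' and ``contains'' anticipate. Write $M\coloneqq 2n_0(n_0-1)$ for the number of possible $2$-clauses on $n_0$ variables, so each $C_j$ equals any prescribed $2$-clause with probability $1/M$, and note that $\varphi$ contains $(l_0,\dots,l_N)$ precisely when $l_0\to l_1\to\dots\to l_N$ is a directed walk in the implication digraph $H$ of $\varphi$ on vertex set $\pm[n_0]$, where $u\to v$ whenever $\{-u,v\}$ is the support of some clause of $\varphi$; this digraph is skew-symmetric, i.e.\ $u\to v$ iff $-v\to -u$.

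\emph{Step 1 (structural lemma).} First I would prove: if $\varphi\in\SAT$ but $\varphi_\mathcal{L}\notin\SAT$, then $\varphi$ contains an $\mathcal{L}$-cobra of some size $N\geq 1$. By~\eqref{splitoffunitclauses}, and since $\mathcal{L}$ is consistent (so $\min(\mathcal{L})\in\SAT$), $\varphi_\mathcal{L}\notin\SAT$ is the same as $\varphi\land\min(\mathcal{L})\notin\SAT$. By the standard theory of $2$-SAT (completeness of unit propagation for satisfiable $2$-CNF formulas), since $\varphi$ is satisfiable, $\varphi\land\min(\mathcal{L})$ is unsatisfiable if and only if there are literals $\ell,\ell'\in\mathcal{L}$ and a literal $m_0$ with directed walks $\ell\to^\ast m_0$ and $\ell'\to^\ast -m_0$ in $H$. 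Reversing the second walk by skew-symmetry yields $m_0\to^\ast -\ell'$, and concatenation gives a walk $l_0=\ell,\,l_1,\dots,l_N=-\ell'$ contained in $\varphi$, with $|l_0|,|l_N|\in\mathcal{L}_\textup{abs}$ and $N\geq 1$ (length $0$ would force $\ell=-\ell'\in\mathcal{L}$, contradicting consistency). Finally truncate at $i^\ast\coloneqq\min\{\,i\in[N]\colon |l_i|\in\mathcal{L}_\textup{abs}\cup\{|l_0|,\dots,|l_{i-1}|\}\,\}$ (well-defined since $i=N$ qualifies): then $(l_0,\dots,l_{i^\ast})$ satisfies \ref{c1}--\ref{c3} and is still contained in $\varphi$, hence is an $\mathcal{L}$-cobra contained in $\varphi$.

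\emph{Step 2 (first moment).} Next I would bound $\mean\bigl[\#\{\mathcal{L}\text{-cobras contained in }\varphi\}\bigr]$. A short case analysis shows that for a cobra $(l_0,\dots,l_N)$ the required supports $\{-l_{t-1},l_t\}$, $t\in[N]$, are pairwise distinct: a coincidence $\{-l_{s-1},l_s\}=\{-l_{t-1},l_t\}$ with $s<t$ is ruled out by \ref{c1} (the only surviving case forces $l_s=-l_s$), apart from degenerate cobras with $|l_{N-1}|=|l_N|$, which no formula contains and so contribute $0$. Hence, by a union bound over ordered tuples of $N$ distinct clause-indices in $[m]$, $\prob\bigl(\varphi\text{ contains }(l_0,\dots,l_N)\bigr)\leq (m/M)^N$. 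The number of $\mathcal{L}$-cobras of size $N$ is at most $2f\cdot(2n_0)^{N-1}\cdot 2(f+N)$ (choices of $l_0$ with $|l_0|\in\mathcal{L}_\textup{abs}$, then $l_1,\dots,l_{N-1}$, then $l_N$ with $|l_N|$ in a set of size $\leq f+N$). Summing over $N\geq 1$ and writing $\rho\coloneqq m/(n_0-1)\to\alpha<1$,
\[
\mean\bigl[\#\{\mathcal{L}\text{-cobras contained in }\varphi\}\bigr]\leq\frac{2f}{n_0}\sum_{N\geq 1}(f+N)\,\rho^N=\frac{2f}{n_0}\Bigl(f\,\frac{\rho}{1-\rho}+\frac{\rho}{(1-\rho)^2}\Bigr).
\]
Since $\rho$ stays below a fixed $\rho_0<1$ for large $n$ and $f=o(\sqrt{n_0})$, both $f^2/n_0\to 0$ and $f/n_0\to 0$, so this expectation tends to $0$; by Markov's inequality, $\prob(\varphi\text{ contains an }\mathcal{L}\text{-cobra})\to 0$.

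\emph{Step 3 (conclusion) and the main obstacle.} Combining the two steps,
\[
\prob(\varphi_\mathcal{L}\notin\SAT)\leq\prob(\varphi\notin\SAT)+\prob(\varphi\in\SAT,\ \varphi_\mathcal{L}\notin\SAT)\leq\prob(\varphi\notin\SAT)+\prob(\varphi\text{ contains an }\mathcal{L}\text{-cobra}),
\]
where the first term tends to $0$ because $\alpha<1=\alpha_\textup{c}(2)$~\cite{CR92,goerdt96} and the second by Step~2; hence $\prob(\varphi_\mathcal{L}\in\SAT)\to 1$. I expect Step~1 to be the main obstacle: making the reduction to cobras rigorous requires carefully invoking (or reproving) the completeness of unit propagation for satisfiable $2$-CNF formulas and handling the skew-symmetry and truncation so that exactly \ref{c1}--\ref{c3} emerge. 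Step~2 is a routine first-moment computation once the distinctness of the required clauses is verified, and Step~3 is immediate.
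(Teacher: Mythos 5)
You take the same overall route as the paper's proof of Lemma~\ref{alphaisnull}: show that unsatisfiability of $\varphi_\mathcal{L}$ for a satisfiable $\varphi$ forces an $\mathcal{L}$-cobra in $\varphi$, bound the expected number of cobras by a first-moment count, and conclude with Markov's inequality and the classical $\alpha<1=\alpha_\textup{c}(2)$. Your Steps~2 and~3 match the paper's computation essentially line for line (the paper's~\eqref{sizeofCN} and~\eqref{proboflinphi} are your two counting bounds, and the final sum is the same geometric series). The genuine difference is in Step~1. The paper builds the reduction to cobras from scratch: it iterates the decomposition~\eqref{phiLsatiff}, and from a failed round extracts a cobra by explicitly chasing back the chain of literals $-L_{j(t),\rho(i(t))}$ via~\eqref{recurvL} and~\eqref{recurvLprime}, with separate cases for a contradictory unit clause and for an emerging $0$-clause. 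You instead invoke a classical fact about $2$-SAT --- completeness of unit propagation on satisfiable $2$-CNF, so that for $\varphi\in\SAT$ and consistent $\mathcal L$, $\varphi\land\min(\mathcal L)\notin\SAT$ exactly when the propagation closure of $\mathcal L$ in the implication digraph is inconsistent --- then turn the witnessing pair of walks into a single walk by skew-symmetry and truncate at $i^\ast$ to get a cobra satisfying~\ref{c1}--\ref{c3}. The fact you invoke is true (one checks it by noting that if the closure $S$ of $\mathcal L$ is consistent then, by closure, every clause with a variable in $S_\textup{abs}$ is already satisfied by $S$, and the remaining clauses form a sub-formula of $\varphi$, hence satisfiable), and your truncation argument correctly delivers~\ref{c1}--\ref{c3}. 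This makes your Step~1 markedly shorter, but it hinges on an unproved and unreferenced black box, which you rightly flag as the main obstacle; stated and proved carefully it carries about the same weight as the paper's explicit extraction. Both routes are sound; yours trades self-containment for brevity.
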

\begin{proof}
    As before we abuse notation by suppressing the dependence on $n$, and we then abbreviate $n$ for $n_0$.
    
    Let $(L_{j,i})_{j\in[m],i\in[2]}$ denote the random literals defining $\varphi$, and let $C_1,C_2,\dots,C_m$ denote the random clauses of $\varphi$, i.e.\ $C_j=L_{j,1}\lor L_{j,2}$, and let $\mathcal C^{(1)}_k$ denote the set defined in~\eqref{defnmk} for each $k\in\{ 0,1,2,\star\}$. Define the corresponding random CNF formulas (cf.~\eqref{defnpsik})
    \begin{equation*}
        \lambda^{(1)}\coloneqq\min_{j\in\mathcal C^{(1)}_1}(C_j)_\mathcal L,\quad\text{and}\quad\varphi^{(1)}\coloneqq\min_{j\in\mathcal C^{(1)}_2}(C_j)_\mathcal L=\min_{j\in\mathcal C^{(1)}_2}C_j.
    \end{equation*}
    Define also $\Lambda_1\coloneqq\bigl\{ (C_j)_\mathcal L\colon j\in\mathcal C^{(1)}_1\bigr\}$, the set of random literals in $\lambda^{(1)}$, i.e.\ $\lambda^{(1)}=\min(\Lambda_1)$. Then by~\eqref{phiLsatiff},
    \begin{equation*}
        \varphi_\mathcal L\in\SAT\iff\lvert\mathcal C^{(1)}_0\rvert=0,\quad\lambda^{(1)}\in\SAT,\quad\text{and}\quad\varphi^{(1)}_{\Lambda_1}\in\SAT.
    \end{equation*}
    We now repeat this with $\varphi^{(1)}_{\Lambda_1}$ in place of $\varphi_\mathcal L$, giving us for each $r\in\mathbb N$ sets $\mathcal C^{(r)}_k$ for $k\in\{ 0,1,2,\star\}$ stemming from $\varphi^{(r-1)}_{\Lambda_{r-1}}$, and random CNF formulas
    \begin{equation*}
        \lambda^{(r)}\coloneqq\min_{j\in\mathcal C^{(r)}_1}(C_j)_{\Lambda_{r-1}},\quad\text{and}\quad\varphi^{(r)}\coloneqq\min_{j\in\mathcal C^{(r)}_2}(C_j)_{\Lambda_{r-1}}=\min_{j\in\mathcal C^{(r)}_2}C_j,
    \end{equation*}
    and we finally also define $\Lambda_r\coloneqq\bigl\{ (C_j)_{\Lambda_{r-1}}\colon j\in\mathcal C^{(r)}_1\bigr\}$ so that $\lambda^{(r)}=\min(\Lambda_r)$. We here take $\varphi^{(0)}\coloneqq\varphi$ and $\Lambda^{(0)}\coloneqq\mathcal L$. Now, notice that
    \begin{equation*}
        \mathcal C^{(r)}_2=\bigcup_{k\in\{ 0,1,2,\star\}}\mathcal C^{(r+1)}_k
    \end{equation*}
    for each $r\in\mathbb N$, and of course the sets $\mathcal C^{(r)}_0,\mathcal C^{(r)}_1,\mathcal C^{(r)}_2,\mathcal C^{(r)}_\star$ are pairwise disjoint. In particular, $\mathcal C^{(1)}_2,\mathcal C^{(2)}_2,\mathcal C^{(3)}_2,\dots$ is a decreasing sequence of finite sets, and thus it must at some point become constant, i.e.\ there exists an $R\in\mathbb N$ such that $\mathcal C^{(r)}_2=\mathcal C^{(r+1)}_2$ for all $r\geq R$ (this happens precisely the first time $\mathcal C^{(r)}_1=\emptyset$, but this is irrelevant). In particular we get for $r\geq R$ that $\varphi^{(r)}=\varphi^{(r+1)}$ and $\mathcal C^{(r+1)}_k=\emptyset$ for $k\in\{ 0,1,\star\}$, and thus also $\Lambda_{r+1}=\emptyset$. We are thus able to define
    \begin{equation*}
        \varphi^{(\infty)}(x)\coloneqq\lim_{r\to\infty}\varphi^{(r)}(x)=\min\Bigl\{ C_j(x)\colon j\in\bigcap_{r\in\mathbb N}\mathcal C^{(r)}_2\Bigr\},\quad (x\in\{-1,1\}^n),
    \end{equation*}
    which we immediately see has the property $\varphi^{(\infty)}(x)\geq\varphi(x)$ for all $x$, so $\varphi\in\SAT$ implies $\varphi^{(\infty)}\in\SAT$. Further, we see that when $r>R$,
    \begin{equation*}
        \varphi^{(r)}_{\Lambda_r}=\varphi^{(\infty)}_\emptyset=\varphi^{(\infty)},
    \end{equation*}
    so using~\eqref{phiLsatiff} iteratively on $\varphi^{(r)}_{\Lambda_r}$ for each $r\in\mathbb N$ yields
    \begin{equation*}
        \varphi_\mathcal L\in\SAT\iff\text{$\lvert\mathcal C^{(r)}_0\rvert=0$ and $\lambda^{(r)}\in\SAT$ for all $r\in\mathbb N$},\quad\text{and}\quad\varphi^{(\infty)}\in\SAT.
    \end{equation*}
    Now define the event
    \begin{equation*}
        F\coloneqq\bigcup_{r\in\mathbb N}\Bigl(\bigl\{\lvert\mathcal C^{(r)}_0\rvert>0\bigr\}\cup\bigl\{\lambda^{(r)}\notin\SAT\bigr\}\Bigr),
    \end{equation*}
    so that, by the above,
    \begin{equation}
    \label{checkpointsubcrit}
        \prob(\varphi_\mathcal L\in\SAT)=\prob\bigl(F^c\cap\{\varphi^{(\infty)}\in\SAT\}\bigr).
    \end{equation}
    Since $\varphi$ is satisfiable w.h.p.~\cite{CR92,goerdt96}, so is~$\varphi^{(\infty)}$, and it thus only remains to prove that $F^c$ also occurs w.h.p., which is the same as proving that $\prob(F)\to 0$ as $n\to\infty$.

    We first prove that, if $F$ occurs, then $\varphi$ contains an $\mathcal L$-cobra. We introduce the ``flipping'' map $\rho:[2]\to[2]$ given by
    \begin{equation*}
        \rho(i)=3-i=
        \begin{cases}
            2,&\text{if $i=1$,} \\
            1,&\text{if $i=2$,}
        \end{cases}
        \quad (i\in[2]),
    \end{equation*}
    so that for any $j\in[m]$ and $i\in[2]$ we have $C_j=L_{j,i}\lor L_{j,\rho(i)}$. Assume then that $F$ occurs, meaning there exists an $r\in\mathbb N$ such that $\lvert\mathcal C^{(r)}_0\rvert>0$ or $\lambda^{(r)}\notin\SAT$. Let $N^\prime$ be the smallest such $r$.
    
    Assume first that $\lambda^{(N^\prime)}\notin\SAT$. This means that there exists two clauses $C_j$ and $C_{j^\prime}$, which at ``timepoint'' $N^\prime-1$ are still $2$-clauses, but when fixing the variables dictated by $\Lambda^{(N^\prime-1)}$, they each become $1$-clauses that contradict each other. That is to say, there exists $j(1),j^\prime(1)\in[m]$ and $i(1),i^\prime(1)\in[2]$ such that
    \begin{equation}
    \label{recurvbegin}
        L_{j(1),i(1)}=-L_{j^\prime(1),i^\prime(1)},\quad\text{and}\quad -L_{j(1),\rho(i(1))},-L_{j^\prime(1),\rho(i^\prime(1))}\in\Lambda^{(N^\prime-1)}.
    \end{equation}
    Now, $-L_{j(1),\rho(i(1))}$ being an element in $\Lambda^{(N^\prime-1)}$ means that there exists $j(2)\in[m]$ and $i(2)\in[2]$ such that
    \begin{equation*}
        -L_{j(1),\rho(i(1))}=L_{j(2),i(2)},\quad\text{and}\quad -L_{j(2),\rho(i(2))}\in\Lambda^{(N^\prime-2)}.
    \end{equation*}
    We continue this recursively, giving sequences $j(1),j(2),\dots,j(N^\prime)\in[m]$ and $i(1),i(2),\dots,i(N^\prime)\in[2]$ such that
    \begin{equation}
    \label{recurvL}
        -L_{j(t),\rho(i(t))}=L_{j(t+1),i(t+1)},\quad\text{and}\quad -L_{j(t+1),\rho(i(t+1))}\in\Lambda^{(N^\prime-(t+1))}
    \end{equation}
    for all $t\in[N^\prime-1]$. In the same way we also get sequences $j^\prime(1),j^\prime(2),\dots,j^\prime(N^\prime)\in[m]$ and $i^\prime(1),i^\prime(2),\dots,i^\prime(N^\prime)\in[2]$ such that
    \begin{equation}
    \label{recurvLprime}
        -L_{j^\prime(t),\rho(i^\prime(t))}=L_{j^\prime(t+1),i^\prime(t+1)},\quad\text{and}\quad -L_{j^\prime(t+1),\rho(i^\prime(t+1))}\in\Lambda^{(N^\prime-(t+1))}
    \end{equation}
    for all $t\in[N^\prime-1]$. We now define for each $t\in\{ 0,1,\dots,N^\prime-1\}$:
    \begin{equation*}
        l_t\coloneqq -L_{j(N^\prime-t),\rho(i(N^\prime-t))}.
    \end{equation*}
    Then $l_0=-L_{j(N^\prime),\rho(i(N^\prime))}\in\Lambda^{(0)}=\mathcal L$ by~\eqref{recurvL}, so condition~\ref{c2} in the definition of an $\mathcal L$-cobra is satisfied. Now define further for $t\in\{ 0,1,\dots,N^\prime-1\}$:
    \begin{equation*}
        l_{N^\prime+t}\coloneqq -L_{j^\prime(t+1),i^\prime(t+1)}.
    \end{equation*}
    We now verify that the sequence $(l_0,l_1,\dots,l_{2N^\prime-1})$ is ``contained'' in $\varphi$ in the sense defined above the formulation of Lemma~\ref{alphaisnull}. For $t\in[N^\prime-1]$ we have
    \begin{align*}
        (-l_{t-1},l_t)&=\bigl(L_{j(N^\prime-t+1),\rho(i(N^\prime-t+1))},-L_{j(N^\prime-t),\rho(i(N^\prime-t))}\bigr) \\
        &=\bigl(L_{j(N^\prime-t+1),\rho(i(N^\prime-t+1))},L_{j(N^\prime-t+1),i(N^\prime-t+1)}\bigr),
    \end{align*}
    using~\eqref{recurvL} in the second equality, as desired. From~\eqref{recurvbegin} we get
    \begin{equation*}
        (-l_{N^\prime-1},l_{N^\prime})=\bigl(L_{j(1),\rho(i(1))},-L_{j^\prime(1),i^\prime(1)}\bigr)=\bigl(L_{j(1),\rho(i(1))},L_{j(1),i(1)}\bigr),
    \end{equation*}
    and lastly for $t\in[N^\prime-1]$:
    \begin{align*}
        (-l_{N^\prime+t-1},l_{N^\prime+t})&=\bigl(L_{j^\prime(t),i^\prime(t)},-L_{j^\prime(t+1),i^\prime(t+1)}\bigr) \\
        &=\bigl(L_{j^\prime(t),i^\prime(t)},L_{j^\prime(t),\rho(i^\prime(t))}\bigr),
    \end{align*}
    using this time~\eqref{recurvLprime}. Hence, $(l_0,l_1,\dots,l_{2N^\prime-1})$ is contained in~$\varphi$. Now, if there exists a $t\in[2N^\prime-1]$ such that $\lvert l_t\rvert\in\{\lvert l_0\rvert,\lvert l_1\rvert,\dots,\lvert l_{t-1}\rvert\}$, then let $N$ be the smallest such~$t$. Then $(l_0,l_1,\dots,l_N)$ is clearly an $\mathcal L$-cobra contained in~$\varphi$ as desired. If not, then the variables $\lvert l_0\rvert,\lvert l_1\rvert,\dots,\lvert l_{2N^\prime-1}\rvert$ are pairwise distinct, and we take $N\coloneqq 2N^\prime$ and define finally
    \begin{equation*}
        l_N\coloneqq L_{j^\prime(N^\prime),\rho(i^\prime(N^\prime))}.
    \end{equation*}
    It follows immediately from~\eqref{recurvLprime} that $-l_N\in\Lambda^{(0)}=\mathcal L$, so of course $\lvert l_N\rvert\in\mathcal L_\textup{abs}$, and the sequence $(l_0,l_1,\dots,l_N)$ is thus an $\mathcal L$-cobra. It is also contained in~$\varphi$ since
    \begin{equation*}
        (-l_{N-1},l_N)=\bigl(L_{j^\prime(N^\prime),i^\prime(N^\prime)},L_{j^\prime(N^\prime),\rho(i^\prime(N^\prime))}\bigr).
    \end{equation*}
    This concludes the case where $\lambda^{(N^\prime)}\notin\SAT$.
    
    Assume now instead that $\lvert\mathcal C^{(N^\prime)}_0\rvert>0$. That is, there exists an element in $\mathcal C^{(N^\prime)}_0$, meaning that there exists a clause~$C_j$ that at ``timepoint'' $N^\prime-1$ is still a $2$-clause, but when fixing the variables dictated by $\Lambda^{(N^\prime-1)}$, it becomes a $0$-clause. More precisely, there exists a $j\in[m]$ such that
    \begin{equation*}
        -L_{j,1},-L_{j,2}\in\Lambda^{(N^\prime-1)}.
    \end{equation*}
    Taking $j(1)\coloneqq j^\prime(1)\coloneqq j$ and $i(1)\coloneqq 1$, $i^\prime(1)\coloneqq 2$, this is the second statement in~\eqref{recurvbegin}, so again we can find $j(2),j^\prime(2),\dots,j(N^\prime),j^\prime(N^\prime)\in[m]$ and $i(2),i^\prime(2),\dots,i(N^\prime),i^\prime(N^\prime)\in[2]$ such that~\eqref{recurvL} and~\eqref{recurvLprime} hold. We then define for $t\in\{ 0,1,\dots,N^\prime-1\}$:
    \begin{equation*}
        l_t\coloneqq -L_{j(N^\prime-t),\rho(i(N^\prime-t))},\quad\text{and}\quad l_{N^\prime+t}\coloneqq L_{j^\prime(t+1),\rho(i^\prime(t+1))},
    \end{equation*}
    where we see that the first of the two definitions coincides with what we had in the case $\lambda^{(N^\prime)}\notin\SAT$, so we need only verify that
    \begin{equation*}
        (-l_{N^\prime-1},l_{N^\prime})=\bigl(L_{j(1),\rho(i(1))},L_{j^\prime(1),\rho(i^\prime(1))}\bigr)=(L_{j,2},L_{j,1}),
    \end{equation*}
    and, using~\eqref{recurvLprime},
    \begin{align*}
        (-l_{N^\prime+t-1},l_{N^\prime+t})&=\bigl(-L_{j^\prime(t),\rho(i^\prime(t))},L_{j^\prime(t+1),\rho(i^\prime(t+1))}\bigr) \\
        &=\bigl(L_{j^\prime(t+1),i^\prime(t+1)},L_{j^\prime(t+1),\rho(i^\prime(t+1))}\bigr),
    \end{align*}
    for all $t\in[N^\prime-1]$, so that $(l_0,l_1,\dots,l_{2N^\prime-1})$ is indeed contained in~$\varphi$. As before we also have $\lvert l_0\rvert\in\mathcal L_\textup{abs}$, so if there exists a $t\in[2N^\prime-1]$ such that $\lvert l_t\rvert\in\{\lvert l_0\rvert,\lvert l_1\rvert,\dots,\lvert l_{t-1}\rvert\}$, then we take $N$ to be the smallest such~$t$, and $(l_0,l_1,\dots,l_N)$ is an $\mathcal L$-cobra contained in~$\varphi$. If there does not exist such a~$t$, then $(l_0,l_1,\dots,l_{2N^\prime-1})$ is does the trick; indeed, we are only missing a verification of~\ref{c3}, but
    \begin{equation*}
        -l_{2N^\prime-1}=-L_{j^\prime(N^\prime),\rho(i^\prime(N^\prime))}\in\Lambda^{(0)}=\mathcal L
    \end{equation*}
    by~\eqref{recurvLprime}, so indeed $\lvert l_{2N^\prime-1}\rvert\in\mathcal L_\textup{abs}$ as required.

    Now let $\mathfrak C_N$ denote the set of $\mathcal L$-cobras of size~$N$ for each $N\in\mathbb N$, and put
    \begin{equation*}
        Z\coloneqq\sum_{N\in\mathbb N}\sum_{l\in\mathfrak C_N}1_{\{\varphi\text{ contains }l\}},
    \end{equation*}
    i.e.\ $Z$ is the number of $\mathcal L$-cobras contained in~$\varphi$. We have just shown that $F\subseteq\{ Z>0\}$, so if we can only show that $\mean[Z]\to 0$ as $n\to\infty$, then the first moment method (Markov's inequality) asserts that
    \begin{equation*}
        \prob(F)\leq\prob(Z>0)\leq\mean[Z]\longrightarrow 0\quad\text{as $n\to\infty$},
    \end{equation*}
    thus finishing the proof of Lemma~\ref{alphaisnull}. To calculate the large $n$ limit of $\mean[Z]$, we first give an upper bound on $\lvert\mathfrak C_N\rvert$, i.e.\ the number of $\mathcal L$-cobras of size~$N$. Since $\mathcal L$ is consistent, $\lvert\mathcal L_\textup{abs}\rvert=\lvert\mathcal L\rvert=f$, and so there are $f$ possible choices for~$\lvert l_0\rvert$ by~\ref{c2}, and taking the sign into consideration gives $2f$ possible choices for~$l_0$. Then each of $l_1,l_2,\dots,l_{N-1}$ can be chosen freely as long as $\lvert l_0\rvert,\lvert l_1\rvert,\dots,\lvert l_{N-1}\rvert$ are pairwise distinct by~\ref{c1}, so choosing them one after the other gives $2(n-t)$ choices for~$l_t$, where $t\in[N-1]$. Finally, $\lvert l_N\rvert$ must be chosen from $\mathcal L_\textup{abs}\cup\{\lvert l_0\rvert,\lvert l_1\rvert,\dots,\lvert l_{N-1}\rvert\}$ by~\ref{c3}, so this gives at the most $2(f+N)$ possible choices for~$l_N$. All in all, we have
    \begin{equation}
    \label{sizeofCN}
        \lvert\mathfrak C_N\rvert\leq 2^{N+1}f(n-1)(n-2)\dots (n-(N-1))(f+N)\leq 2^{N+1}n^{N-1}f(f+N).
    \end{equation}
    Now, given some $l\in\mathfrak C_N$, what is the probability that $\varphi$ contains~$l$? Notice that the sets
    \begin{equation*}
        \{-l_0,l_1\},\{-l_1,l_2\},\dots,\{-l_{N-1},l_N\}
    \end{equation*}
    are pairwise different by property~\ref{c1}, and again by~\ref{c1}, each of the sets contain exactly two elements (literals), \emph{except} for possibly the last set $\{-l_{N-1},l_N\}$, where we might have $-l_{N-1}=l_N$. For any $l,l^\prime\in\pm[n]$ and $j\in[m]$ we have
    \begin{equation*}
        \prob\bigl(\{ L_{j,1},L_{j,2}\}=\{ l,l^\prime\}\bigr)=
        \begin{cases}
            2/(2n)^2=1/(2n^2),&\text{if $l\neq l^\prime$,} \\
            1/(2n)^2=1/(4n^2),&\text{if $l=l^\prime$,}
        \end{cases}
    \end{equation*}
    but in all cases $\prob\bigl(\{ L_{j,1},L_{j,2}\}=\{ l,l^\prime\}\bigr)\leq 1/(2n^2)$. Next we note that, for pairwise different $j_1,j_2,\dots,j_N\in[m]$,
    \begin{equation*}
        \prob\bigl(\{ L_{j_t,1},L_{j_t,2}\}=\{-l_{t-1},l_t\}\text{ for all $t\in[N]$}\bigr)=\prod_{t\in[N]}\prob\bigl(\{ L_{j_t,1},L_{j_t,2}\}=\{-l_{t-1},l_t\}\bigr)\leq\frac{1}{(2n^2)^N},
    \end{equation*}
    and since there are $\binom{m}{N}$ choices for $j_1,j_2,\dots,j_N$ and $N!$ ways to arrange these, we get the following bound for the probability that $\varphi$ contains~$l$:
    \begin{equation}
    \label{proboflinphi}
        \prob(\text{$\varphi$ contains $l$})\leq\binom{m}{N}\frac{N!}{2^Nn^{2N}}.
    \end{equation}
    Putting~\eqref{sizeofCN} and~\eqref{proboflinphi} together yields
    \begin{equation*}
        \mean[Z]=\sum_{N\in\mathbb N}\sum_{l\in\mathfrak C_N}\prob(\text{$\varphi$ contains $l$})\leq\sum_{N\in\mathbb N}\frac{m!2f(f+N)}{(m-N)!n^{N+1}}\leq\frac{f}{\sqrt{n}}\sum_{N\in\mathbb N}2\bigg(\frac{m}{n}\bigg)^N\frac{f+N}{\sqrt{n}}.
    \end{equation*}
    We now choose $\epsilon>0$ small enough that $\alpha+\epsilon<1$, and since $m/n\to\alpha$ as $n\to\infty$, we get for large enough $n$ that $m/n\leq\alpha+\epsilon$. On the other hand, $f/\sqrt{n}\to 0$ as $n\to\infty$, so for $n$ large enough we have $f/\sqrt{n}\leq 1\leq N$ for all $N\in\mathbb N$, so
    \begin{equation*}
        \sum_{N\in\mathbb N}2\bigg(\frac{m}{n}\bigg)^N\frac{f+N}{\sqrt{n}}\leq\sum_{N\in\mathbb N}4(\alpha+\epsilon)^N N\eqqcolon\kappa<\infty,
    \end{equation*}
    where the first inequality holds when $n$ is large enough, and $\kappa$ is a fixed number which does not depend on~$n$, and thus
    \begin{equation*}
        \mean[Z]\leq\frac{f}{\sqrt{n}}\kappa\longrightarrow 0\quad\text{as $n\to\infty$},
    \end{equation*}
    which was the final thing missing.
\end{proof}

We now have all necessary results in place and are thus equipped to prove Theorem~\ref{mainthm}. We prove initially the case $k=2$ when $\alpha>0$, which is the most difficult part of the theorem. We proceed by showing that the claimed limit is both an asymptotic upper- and lower bound on the quantity under consideration.

\subsection{Lower bound}
We now begin our proof of Theorem~\ref{mainthm} in the case $k=2$ when $\alpha>0$. Just as with the special cases, we will prove the result in a slightly more general form, where $\varphi$ is a random $2$-CNF formula with $m=m(n)$ clauses and $n_0=n_0(n)$ variables, where $n_0\to\infty$ and $m/n_0\to\alpha$, $0<\alpha<1$. Also, $\mathcal L\subseteq\pm[n_0]$ has $\lvert\mathcal L\rvert=f=f(n)$, where $f/\sqrt{n_0}\to\beta$, $0\leq\beta\leq\infty$. The goal is to compute $\lim_{n\to\infty}\prob(\varphi_\mathcal L\in\SAT)$. In this section we give a proof of the lower bound
\begin{equation*}
    \liminf_{n\to\infty}\prob(\varphi_\mathcal L\in\SAT)\geq\exp\biggl(\frac{-\beta^2\alpha}{4(1-\alpha)}\biggr).
\end{equation*}

In the case that $\beta=\infty$, the lower bound $\liminf_{n\to\infty}\prob(\varphi_\mathcal L\in\SAT)\geq 0$ is trivial, so assume without loss of generality that $\beta<\infty$. The case $\beta=0$ is Lemma~\ref{alphaisnull}, so assume also that $\beta>0$. We will prove that
\begin{equation*}
    \liminf_{n\to\infty}\prob(\varphi_\mathcal L\in\SAT)\geq\exp\biggl(\frac{-\beta^2(1+\epsilon)\alpha}{4[1-(1+\epsilon)\alpha]}\biggr)
\end{equation*}
for all $\epsilon>0$ small enough so that $(1+\epsilon)\alpha<1$. Taking $\epsilon\to 0$ will then yield the correct lower bound. Thus, let $\epsilon>0$ be such that $(1+\epsilon)\alpha<1$, and define for convenience $\alpha_\epsilon\coloneqq (1+\epsilon)\alpha$.

Let $C_1,\dots,C_m$ be the random $2$-clauses defining~$\varphi$. By Lemma~\ref{probdecreaseinc} we may assume without loss of generality that $\mathcal L=[n_0]\setminus[n_0-f]$. Let as before $K\coloneqq\{ 0,1,2,\star\}$, and let for each $k\in K$ $\mathcal C^{(1)}_k$ denote the set $\mathcal C_k$ from~\eqref{defnmk}, i.e.\ $\mathcal C^{(1)}_k$ is the set of $j\in[m]$ for which the $j$'th clause of $\varphi$ has become a $k$-clause (a $\star$-clause meaning a satisfied clause) when fixing the variables dictated by $\mathcal L$. Put $\mathcal C\coloneqq (\mathcal C^{(1)}_k)_{k\in K}$. Let further $\varphi_1$ and $\varphi_2$ be the random functions defined in~\eqref{defnpsik}, i.e.\
\begin{equation*}
    \varphi_k\coloneqq\min_{j\in\mathcal C^{(1)}_k}(C_j)_\mathcal L,\quad (k=1,2).
\end{equation*}
Define finally
\begin{equation*}
    n_1\coloneqq n_0-f,\quad f_1\coloneqq\bigl\lfloor\alpha_\epsilon f+\alpha_\epsilon n_0^{3/8}\bigr\rfloor,\quad\mathcal L_1\coloneqq [n_1]\setminus[n_1-f_1],
\end{equation*}
(and note that all of $n_1$, $f_1$, and $\mathcal L_1$ depend on $n$), and let $\lambda^{(1)}$ denote a random $1$-CNF formula with $f_1$ clauses and $n_1$ variables, and let $\varphi^{(1)}$ denote a random $2$-CNF formula with $m$ clauses and $n_1$ variables. The exponent $3/8$ in the definition of $f_1$ could be any number strictly between $1/4$ and $1/2$, as the proof will show.

With all this in place, we get from~\eqref{phiLsatiff} that
\begin{align*}
    \prob(\varphi_\mathcal L\in\SAT)&=\prob\bigl(\lvert\mathcal C^{(1)}_0\rvert=0,\,\varphi_1\in\SAT,\, (\varphi_2)_{\varphi_1}\in\SAT\bigr) \\
    &\geq\prob\bigl(\lvert\mathcal C^{(1)}_0\rvert=0,\,\lvert\mathcal C^{(1)}_1\rvert\leq f_1,\,\varphi_1\in\SAT,\, (\varphi_2)_{\varphi_1}\in\SAT\bigr) \\
    &=\sum_{M\in\mathfrak M}\prob\bigr(\varphi_1\in\SAT,\, (\varphi_2)_{\varphi_1}\in\SAT\bigm\vert\mathcal C=M\bigr)\prob(\mathcal C=M),
\end{align*}
where $\mathfrak M$ is the set of partitions $(M_k)_{k\in K}$ of $[m]$ such that $\lvert M_0\rvert=0$ and $\lvert M_1\rvert\leq f_1$. For any $M\in\mathfrak M$ we now consider for $k=1,2$ an auxiliary random $k$-CNF formula $\psi_k$ with $\lvert M_k\rvert$ clauses and $n_1$ variables, such that $\psi_1$ and $\psi_2$ are independent. Then by Lemma~\ref{distofphiandm} the distribution of $(\psi_1,\psi_2)$ (under~$\prob$) is the same as that of $(\varphi_1,\varphi_2)$ in the conditional distribution given $\mathcal C=M$. Hence, letting $\mathcal B$ denote the set of all satisfiable $1$-CNF formulas with $\lvert M_1\rvert$ clauses and $n_1$ variables, we get
\begin{equation}
\label{transfermethod}
\begin{aligned}
    \prob\bigl(\varphi_1\in\SAT,\, (\varphi_2)_{\varphi_1}\in\SAT\bigm\vert\mathcal C=M\bigr)&=\prob\bigl(\psi_1\in\SAT,\, (\psi_2)_{\psi_1}\in\SAT\bigr) \\
    &=\sum_{h\in\mathcal B}\prob\bigl(\psi_1=h,\, (\psi_2)_h\in\SAT\bigr) \\
    &=\sum_{h\in\mathcal B}\prob(\psi_1=h)\prob\bigl((\psi_2)_h\in\SAT\bigr) \\
    &\geq\prob\bigl(\psi_1\in\SAT\bigr)\prob\bigl((\psi_2)_{\mathcal L_1}\in\SAT\bigr) \\
    &\geq\prob\bigl(\lambda^{(1)}\in\SAT\bigr)\prob\bigl(\varphi^{(1)}_{\mathcal L_1}\in\SAT\bigr),
\end{aligned}
\end{equation}
where we in the first inequality have used Lemma~\ref{probdecreaseinc}(ii) (since $\lvert\mathcal L_1\rvert=f_1\geq\lvert M_1\rvert$), and in the final inequality used Lemma~\ref{probdecreaseinm} with the fact that $\lambda^{(1)}$ has more clauses than~$\psi_1$ and $\varphi^{(1)}$ has more clauses than~$\psi_2$. Using this, we get
\begin{align*}
    \MoveEqLeft[2.5]\smash{\sum_{M\in\mathfrak M}}\prob\bigl(\varphi_1\in\SAT,\, (\varphi_2)_{\varphi_1}\in\SAT\bigm\vert\mathcal C=M\bigr)\prob(\mathcal C=M) \\
    &\geq\prob\bigl(\lambda^{(1)}\in\SAT\bigr)\prob\bigl(\varphi^{(1)}_{\mathcal L_1}\in\SAT\bigr)\prob\bigl(\lvert\mathcal C^{(1)}_0\rvert=0,\,\lvert\mathcal C^{(1)}_1\rvert\leq f_1\bigr) \\
    &=\prob\bigl(\lambda^{(1)}\in\SAT\bigr)\prob\bigl(\varphi^{(1)}_{\mathcal L_1}\in\SAT\bigr)\prob\bigl(\lvert\mathcal C^{(1)}_0\rvert=0\bigr)\prob\bigl(\lvert\mathcal C^{(1)}_1\rvert\leq f_1\bigm\vert\lvert\mathcal C^{(1)}_0\rvert=0\bigr),
\end{align*}
where the final factor should read: the probability that $\lvert\mathcal C^{(1)}_1\rvert\leq f_1$ given $\lvert\mathcal C^{(1)}_0\rvert=0$. All together we conclude that
\begin{equation}
\label{lowerbdfirst}
    \prob(\varphi_\mathcal L\in\SAT)\geq\prob\bigl(\lambda^{(1)}\in\SAT\bigr)\prob\bigl(\varphi^{(1)}_{\mathcal L_1}\in\SAT\bigr)\prob\bigl(\lvert\mathcal C^{(1)}_0\rvert=0\bigr)\prob\bigl(\lvert\mathcal C^{(1)}_1\rvert\leq f_1\bigm\vert\lvert\mathcal C^{(1)}_0\rvert=0\bigr).
\end{equation}

Notice now that we are in the same situation with $\varphi^{(1)}_{\mathcal L_1}$ as we were with~$\varphi_\mathcal L$, so we can repeat the above procedure. Put
\begin{equation*}
    R\coloneqq\bigl\lfloor c\log(n_0)\bigr\rfloor,\quad\text{where}\quad c\coloneqq\frac{1}{16\log(1/\alpha_\epsilon)}>0.
\end{equation*}
and define for all $r\in\mathbb N$ (where we take $f_0\coloneqq f$):
\begin{equation*}
    n_r\coloneqq n_{r-1}-f_{r-1}=n_0-\sum_{k=0}^{r-1}f_k,\quad f_r\coloneqq\bigl\lfloor\alpha_\epsilon^r f+r\alpha_\epsilon^r n_0^{3/8}\bigr\rfloor,\quad\mathcal L_r\coloneqq[n_r]\setminus[n_r-f_r],
\end{equation*}
and let $\mathcal C^{(r)}_k$ denote the set $\mathcal C_k$ from~\eqref{defnmk} with $\varphi^{(r-1)}$ in place of~$\varphi$ and $\mathcal L_{r-1}$ in place of~$\mathcal L$ for each $k\in K$, and denote finally by $\lambda^{(r)}$ a random $1$-CNF formula with $f_r$ clauses and $n_r$ variables, and by $\varphi^{(r)}$ a random $2$-CNF formula with $m$ clauses and $n_r$ variables. Redoing the argument above, we get for each $r\in\mathbb N$ the following:
\begin{equation*}
    \prob\bigl(\varphi^{(r-1)}_{\mathcal L_{r-1}}\in\SAT\bigr)\geq\prob\bigl(\lambda^{(r)}\in\SAT\bigr)\prob\bigl(\varphi^{(r)}_{\mathcal L_r}\in\SAT\bigr)\prob\bigl(\lvert\mathcal C^{(r)}_0\rvert=0\bigr)\prob\bigl(\lvert\mathcal C^{(r)}_1\rvert\leq f_r\bigm\vert\lvert\mathcal C^{(r)}_0\rvert=0\bigr),
\end{equation*}
and by successively applying this inequality $R$ times we obtain:
\begin{equation}
\begin{aligned}
    \prob(\varphi_\mathcal L\in\SAT)\geq{}&\prob\bigl(\varphi^{(R)}_{\mathcal L_R}\in\SAT\bigr)\prod_{r=1}^R\prob\bigl(\lambda^{(r)}\in\SAT\bigr) \\
    &\times\prod_{r=1}^R\prob\bigl(\lvert\mathcal C^{(r)}_0\rvert=0\bigr)\prod_{r=1}^R\prob\bigl(\lvert\mathcal C^{(r)}_1\rvert\leq f_r\bigm\vert\lvert\mathcal C^{(r)}_0\rvert=0\bigr).
\end{aligned}
\label{maindecomp}
\end{equation}
Hence, the stipulated asymptotic lower bound will follow if we establish the following points:
\begin{enumerate}[label=(L\arabic*)]
    \item\label{L1} $\lim_{n\to\infty}\prob\bigl(\varphi^{(R)}_{\mathcal L_R}\in\SAT\bigr)=1$,
    \item\label{L2} $\lim_{n\to\infty}\prod_{r=1}^R\prob\bigl(\lambda^{(r)}\in\SAT\bigr)=\exp\biggl(\frac{-\beta^2}{4}\sum_{r=1}^\infty\alpha_\epsilon^{2r}\biggr)$,
    \item\label{L3} $\lim_{n\to\infty}\prod_{r=1}^R\prob\bigl(\lvert\mathcal C^{(r)}_0\rvert=0\bigr)\geq\exp\biggl(\frac{-\beta^2}{4}\sum_{r=1}^\infty\alpha_\epsilon^{2r-1}\biggr)$,
    \item\label{L4} $\lim_{n\to\infty}\prod_{r=1}^R\prob\bigl(\lvert\mathcal C^{(r)}_1\rvert\leq f_r\bigm\vert\lvert\mathcal C^{(r)}_0\rvert=0\bigr)=1$.
\end{enumerate}
It will be useful to note that, according to Lemma~\ref{distofphiandm}, it holds for each $n,r\in\mathbb N$ that
\begin{equation}
\label{distofmr}
    \bigl(\lvert\mathcal C^{(r)}_0\rvert,\lvert\mathcal C^{(r)}_1\rvert,\lvert\mathcal C^{(r)}_2\rvert,\lvert\mathcal C^{(r)}_\star\rvert\bigr)\sim\mathrm{Multinomial}\bigl(m,(p^{(r)}_0,p^{(r)}_1,p^{(r)}_2,p^{(r)}_\star)\bigr),
\end{equation}
where
\begin{equation*}
    p^{(r)}_0=\frac{f_{r-1}(f_{r-1}-1)}{4n_{r-1}(n_{r-1}-1)},\quad p^{(r)}_1\coloneqq\frac{f_{r-1}n_r}{n_{r-1}(n_{r-1}-1)},\quad p^{(r)}_2\coloneqq\frac{n_r(n_r-1)}{n_{r-1}(n_{r-1}-1)},
\end{equation*}
and lastly $p^{(r)}_\star=1-p^{(r)}_0-p^{(r)}_1-p^{(r)}_2$. It will further be useful to note that, for any fixed $r\in\mathbb N$,
\begin{equation}
\label{computelimfr}
    \lim_{n\to\infty}\frac{f_r}{\sqrt{n_0}}=\alpha_\epsilon^r\beta.
\end{equation}
Putting then $S\coloneqq\sup_{n\in\mathbb N}(n_0^{3/8}/f)<\infty$, we get the upper bound
\begin{equation}
\label{boundfr}
    f_r\leq\alpha_\epsilon^r f+r\alpha_\epsilon^r n_0^{3/8}\leq\alpha_\epsilon^r f+Sr\alpha_\epsilon^r f=(1+Sr)\alpha_\epsilon^r f
\end{equation}
for all $r\in\mathbb N$. This yields further the following bound:
\begin{equation}
\label{boundsumfk}
    \sum_{k=0}^\infty f_k\leq\biggl[\sum_{k=0}^\infty\alpha_\epsilon^k+S\sum_{k=0}^\infty k\alpha_\epsilon^k\biggr]f=Cf,
\end{equation}
where $C=(1+(S-1)\alpha_\epsilon)/(1-\alpha_\epsilon)^2<\infty$. We get from the definition of $n_r$ and~\eqref{boundsumfk} that
\begin{equation}
\label{boundur}
    n_0-Cf\leq n_r\leq n_0
\end{equation}
for all $r\in\mathbb N$ and $n\in\mathbb N$, yielding in particular (noticing that also $R$ depends on~$n$)
\begin{equation}
\label{computelimuranduR}
    \lim_{n\to\infty}\frac{n_r}{n_0}=1,\quad\text{and}\quad\lim_{n\to\infty}\frac{n_R}{n_0}=1.
\end{equation}
Since $c\log(n_0)\leq R+1$, $\alpha_\epsilon<1$ and $c=1/(16\log(1/\alpha_\epsilon))$, we get the following bounds on~$\alpha_\epsilon^R$:
\begin{equation}
\label{betapowerR}
    \frac{1}{n_0^{1/16}}=\alpha_\epsilon^{c\log(n_0)}\leq\alpha_\epsilon^R\leq\alpha_\epsilon^{c\log(n_0)-1}=\frac{1}{\alpha_\epsilon n_0^{1/16}}.
\end{equation}
Using the lower bound, we find for $r\in[R]$ that $r\alpha_\epsilon^r n_0^{3/8}\geq\alpha_\epsilon^R n_0^{3/8}\geq n_0^{5/16}\geq 1$, so
\begin{equation}
\label{lowerboundfr}
    f_r=\bigl\lfloor\alpha_\epsilon^r f+r\alpha_\epsilon^r n_0^{3/8}\bigr\rfloor\geq\alpha_\epsilon^r f\geq\alpha_\epsilon^R f\geq\frac{f}{n_0^{1/16}}
\end{equation}
for all $r\in[R]$ and $n\in\mathbb N$.

Using now the upper bound on $\alpha_\epsilon^{R(n)}$ from~\eqref{betapowerR}, we conclude using initially~\eqref{boundfr} that
\begin{equation*}
    f_R\leq (1+SR)\alpha_\epsilon^R f\leq\frac{1+Sc\log(n_0)}{n_0^{1/16}}\cdot\frac{f}{\alpha_\epsilon},
\end{equation*}
so that $f_R/\sqrt{n_0}\to 0$. Hence, since $\lvert\mathcal L_R\rvert=f_R$ and since $\varphi^{(R)}$ is a random $2$-CNF formula with $m$ clauses and $n_R$ variables, where we have seen that $n_R/n_0\to 1$, we see that~\ref{L1} follows from Lemma~\ref{alphaisnull}.

We now establish~\ref{L2}. Since $\lambda^{(r)}$ is a random $1$-CNF formula with $f_r$ clauses and $n_r$ variables for every $r\in\mathbb N$, Lemma~\ref{random1sat} together with~\eqref{computelimfr} and~\eqref{computelimuranduR} shows that
\begin{equation*}
    \lim_{n\to\infty}\prob\bigl(\lambda^{(r)}\in\SAT\bigr)=e^{-\tfrac{1}{4}\beta^2\alpha_\epsilon^{2r}}.
\end{equation*}
Taking the logarithm, we want to conclude that
\begin{equation*}
    \lim_{n\to\infty}\sum_{r=1}^\infty\log\prob\bigl(\lambda^{(r)}\in\SAT\bigr)1_{[R]}(r)=-\frac{1}{4}\beta^2\sum_{r=1}^\infty\alpha_\epsilon^{2r},
\end{equation*}
which we will do by arguing that dominated convergence applies. This requires a uniform (over large enough~$n$) summable (in~$r$) lower bound on $\log\prob(\lambda^{(r)}\in\SAT)$. It follows from~\eqref{needforlater1sat} in the proof of Lemma~\ref{random1sat} that
\begin{equation*}
    \prob\bigl(\lambda^{(r)}\in\SAT\bigr)\geq\sum_{k=0}^{\lfloor f_r/2\rfloor}\binom{n_r-k}{f_r-2k}\binom{n_r}{k}\frac{f_r!}{4^k n_r^{f_r}}\geq\binom{n_r}{f_r}\frac{f_r!}{n_r^{f_r}}=\prod_{d=0}^{f_r-1}\frac{n_r-d}{n_r},
\end{equation*}
where we in the second inequality simply remove all but the first term. Now, from the classical inequality $\log(t)\leq t-1$ for all $t>0$, we multiply by~$-1$ and evaluate at $t=1/(1-s)$, yielding
\begin{equation}
\label{canonlogbound}
    \log(1-s)\geq\frac{s}{s-1}\quad\text{for all $s<1$}.
\end{equation}
Using this and our inequalities above, we get
\begin{equation*}
    \log\prob\bigl(\lambda^{(r)}\in\SAT\bigr)\geq\sum_{d=0}^{f_r-1}\log\bigl(1-\tfrac{d}{n_r}\bigr)\geq\sum_{d=0}^{f_r-1}\frac{d}{d-n_r}\geq\frac{1}{f_r-n_r}\sum_{d=0}^{f_r-1}d=\frac{(f_r-1)f_r}{2(f_r-n_r)}\geq\frac{-f_r^2}{2n_{r+1}},
\end{equation*}
where we have used $n_r-f_r=n_{r+1}$. From here we apply~\eqref{boundfr} and~\eqref{boundur} to get
\begin{equation*}
    \frac{f_r^2}{2n_{r+1}}\leq (1+Sr)^2\alpha_\epsilon^{2r}\frac{f^2}{2(n_0-Cf)}.
\end{equation*}
Now, since
\begin{equation*}
    \frac{f^2}{2(n_0-Cf)}=\frac{1}{2}\biggl(\frac{f}{\sqrt{n_0}}\biggr)^2\biggl(\frac{n_0}{n_0-Cf}\biggr)\longrightarrow\frac{\beta^2}{2}
\end{equation*}
as $n\to\infty$, we conclude that, when $n$ is large enough, it holds for all $r\in\mathbb N$ that
\begin{equation*}
    \log\prob\bigl(\lambda^{(r)}\in\SAT\bigr)\geq-\beta^2(1+Sr)^2\alpha_\epsilon^{2r},
\end{equation*}
and of course $(1+Sr)^2\alpha_\epsilon^r\to 0$ as $r\to\infty$, so $S^\prime\coloneqq\sup_{r\in\mathbb N}(1+Sr)^2\alpha_\epsilon^r<\infty$, thus
\begin{equation}
\label{lowerboundsummable}
    \sum_{r=1}^\infty (1+Sr)^2\alpha_\epsilon^{2r}\leq S^\prime\sum_{r=1}^\infty\alpha_\epsilon^r=S^\prime\frac{\alpha_\epsilon}{1-\alpha_\epsilon}<\infty,
\end{equation}
as required.

We now establish~\ref{L3}. From~\eqref{distofmr} it follows that $\lvert\mathcal C^{(r)}_0\rvert\sim\mathrm{Binomial}(m,p^{(r)}_0)$ for all $r\in\mathbb N$. Writing
\begin{equation*}
    \prob\bigl(\lvert\mathcal C^{(r)}_0\rvert=0\bigr)=\bigl(1-p^{(r)}_0\bigr)^m=\biggl(\biggl(1-\biggl(\frac{f_{r-1}(f_{r-1}-1)}{n_0}\biggr)\biggl(\frac{n_0^2}{n_{r-1}(n_{r-1}-1)}\bigg)\frac{1}{4n_0}\biggr)^{n_0}\biggr)^{m/n_0},
\end{equation*}
we find, using \eqref{computelimfr}, \eqref{computelimuranduR}, and $m/n_0\to\alpha$, that $\prob(\lvert\mathcal C^{(r)}_0\rvert=0)\to e^{-\tfrac{1}{4}\beta^2\alpha_\epsilon^{2(r-1)}\alpha}$ as $n\to\infty$. Taking the logarithm, we want to conclude that
\begin{equation*}
    \lim_{n\to\infty}\sum_{r=1}^\infty\log\prob\bigl(\lvert\mathcal C^{(r)}_0\rvert=0\bigr)1_{[R]}(r)=-\frac{1}{4}\beta^2\alpha\sum_{r=1}^\infty\alpha_\epsilon^{2(r-1)}\geq -\frac{1}{4}\beta^2\sum_{r=1}^\infty\alpha_\epsilon^{2r-1},
\end{equation*}
where the inequality follows from $\alpha\leq (1+\epsilon)\alpha=\alpha_\epsilon$, and the equality will again follow from an application of dominated convergence, once we give a uniform (over large enough~$n$) summable (in~$r$) lower bound on the terms $\log\prob(\lvert\mathcal C^{(r)}_0\rvert=0)$. Notice first of all that, using~\eqref{boundsumfk} and~\eqref{boundur},
\begin{equation*}
    p^{(r)}_0\leq\frac{C^2}{4}\biggl(\frac{f}{\sqrt{n_0}}\biggr)^2\biggl(\frac{n_0}{n_0-Cf-1}\biggr)^2\frac{1}{n_0}\longrightarrow 0\quad\text{as $n\to\infty$},
\end{equation*}
where the convergence is uniform in~$r$ (since the upper bound does not depend on~$r$). Thus, when $n$ is large enough, $p^{(r)}_0\leq 1/2$ for all $r\in\mathbb N$. Next, using now~\eqref{boundfr} and~\eqref{boundur}, we find that
\begin{equation*}
    mp^{(r)}_0\leq\frac{(1+Sr)^2\alpha_\epsilon^{2r}}{4}\biggl(\frac{f}{\sqrt{n_0}}\biggr)^2\biggl(\frac{n_0}{n_0-Cf-1}\biggr)^2\frac{m}{n_0},
\end{equation*}
where of course
\begin{equation*}
    \lim_{n\to\infty}\biggl(\frac{f}{\sqrt{n_0}}\biggr)^2\biggl(\frac{n_0}{n_0-Cf-1}\biggr)^2\frac{m}{n_0}=\beta^2\alpha\leq\beta^2\alpha_\epsilon,
\end{equation*}
so it follows that when $n$ is large enough, it holds for all $r\in\mathbb N$ that
\begin{equation}
\label{boundmpr0}
    mp^{(r)}_0\leq\tfrac{1}{2}\beta^2(1+Sr)^2\alpha_\epsilon^{2r-1}.
\end{equation}
Thus, when $n$ is large enough to satisfy both~\eqref{boundmpr0} and $p^{(r)}_0\leq\tfrac{1}{2}$ for all $r\in\mathbb N$, we get by using~\eqref{canonlogbound} that
\begin{equation*}
    \log\prob\bigl(\lvert\mathcal C^{(r)}_0\rvert=0\bigr)=m\log\bigl(1-p^{(r)}_0\bigr)\geq\frac{mp^{(r)}_0}{p^{(r)}_0-1}\geq -\beta^2(1+Sr)^2\alpha_\epsilon^{2r-1}
\end{equation*}
holds for all $r\in\mathbb N$, where we used $p^{(r)}_0\leq\tfrac{1}{2}$ in the denominator and~\eqref{boundmpr0} in the numerator, and~\eqref{lowerboundsummable} shows that this lower bound is summable, proving~\ref{L3}.

We now establish~\ref{L4}. Here we will not be able to apply dominated convergence, and it is for this reason that we stop the ``splitting'' process in~\eqref{maindecomp} after $R$ steps/rounds. If we were able to establish~\ref{L4} with (symbolically) $R=\infty$, then we could have completely bypassed a verification of~\ref{L1} and thus the need for Lemma~\ref{alphaisnull}. But alas, we proceed without the comfort of dominated convergence and with the need for a verification of~\ref{L1} and Lemma~\ref{alphaisnull}. Also, points~\ref{L1} through~\ref{L3} could all have been verified with $\alpha$ in place of~$\alpha_\epsilon$, and it is only for this point~\ref{L4} that we need the ``$\epsilon$-breathing room'' that $\alpha_\epsilon$ provides us over a more direct calculation using~$\alpha$. Finally, it is at this point that we need the exponent~$3/8$ in the definition of~$f_r$ to be greater than~$1/4$, and actually the precise definition of~$f_r$ comes into play, where only the first order asymptotics of~$f_r$ mattered for the other points. Thus, point~\ref{L4} is by far the most delicate out of the four.

First of all, if we define
\begin{equation*}
    \xi\coloneqq\max_{r\in [R]}\prob\bigl(\lvert\mathcal C^{(r)}_1\rvert>f_r\bigm\vert\lvert\mathcal C^{(r)}_0\rvert=0\bigr),
\end{equation*}
then taking the logarithm gives
\begin{align*}
    \sum_{r=1}^R\log\prob\bigl(\lvert\mathcal C^{(r)}_1\rvert\leq f_r\bigm\vert\lvert\mathcal C^{(r)}_0\rvert=0\bigr)&\geq R\min_{r\in [R]}\log\prob\bigl(\lvert\mathcal C^{(r)}_1\rvert\leq f_r\bigm\vert\lvert\mathcal C^{(r)}_0\rvert=0\bigr) \\
    &=R\log(1-\xi)\geq R\frac{\xi}{\xi-1}\geq\frac{c\log(n_0)\xi}{\xi-1},
\end{align*}
where we in the second to last inequality use~\eqref{canonlogbound}. Hence, if we can show that $\log(n_0)\xi\to 0$ as $n\to\infty$, then certainly $\xi\to 0$ as well, and thus
\begin{equation*}
    \lim_{n\to\infty}\frac{c\log(n_0)\xi}{\xi-1}=0,
\end{equation*}
from which~\ref{L4} follows. Now, from~\eqref{distofmr} we get from known results that in the conditional distribution given $\lvert\mathcal C^{(r)}_0\rvert=0$, $\lvert\mathcal C^{(r)}_1\rvert$ follows a Binomial distribution with parameters~$m$ and
\begin{equation*}
    p^{(r)}_{1\mid 0}\coloneqq\frac{p^{(r)}_1}{1-p^{(r)}_0}=\frac{f_{r-1}n_r}{n_{r-1}(n_{r-1}-1)-\tfrac{1}{4}f_{r-1}(f_{r-1}-1)}.
\end{equation*}
Now consider for a moment the expression
\begin{equation}
\label{neededforL4}
    \frac{mn_r}{n_{r-1}(n_{r-1}-1)-\tfrac{1}{4}f_{r-1}(f_{r-1}-1)}\leq\frac{mn_0}{(n_0-Cf-1)^2-\tfrac{1}{4}(Cf)^2}\longrightarrow\alpha\quad\text{as $n\to\infty$},
\end{equation}
where we have used~\eqref{boundsumfk} and~\eqref{boundur}, and the convergence is uniform in~$r$. Letting then $N^{(r)}\sim\mathrm{Binomial}(m,p^{(r)}_{1\mid 0})$ under~$\prob$, we see that
\begin{equation}
\label{meanNrupperbound}
    \mean[N^{(r)}]=mp^{(r)}_{1\mid 0}=f_{r-1}\frac{mn_r}{n_{r-1}(n_{r-1}-1)-\tfrac{1}{4}f_{r-1}(f_{r-1}-1)}\leq (1+\epsilon)\alpha f_{r-1}=\alpha_\epsilon f_{r-1},
\end{equation}
where the last inequality holds for large enough~$n$ independent of~$r$ thanks to~\eqref{neededforL4}. Furthermore, since $f_{r-1}=\lfloor\alpha_\epsilon^{r-1}f+(r-1)\alpha_\epsilon^{r-1}n_0^{3/8}\rfloor$, we get
\begin{equation*}
    \alpha_\epsilon f_{r-1}\leq\alpha_\epsilon^r f+r\alpha_\epsilon^r n_0^{3/8}-\alpha_\epsilon^r n_0^{3/8},
\end{equation*}
giving all together
\begin{equation}
\label{meannrbound}
    \alpha_\epsilon^r f+r\alpha_\epsilon^r n_0^{3/8}\geq\mean[N^{(r)}]+\alpha_\epsilon^r n_0^{3/8},
\end{equation}
uniformly in~$r$ for large enough~$n$. Now,
\begin{equation*}
    \prob\bigl(\lvert\mathcal C^{(r)}_1\rvert>f_r\bigm\vert\lvert\mathcal C^{(r)}_0\rvert=0\bigr)=\prob\bigl(N^{(r)}>f_r\bigr),
\end{equation*}
and since $N^{(r)}$ takes only integer values, rounding down makes no difference, i.e.\
\begin{equation*}
    \prob\bigl(N^{(r)}>f_r\bigr)=\prob\bigl(N^{(r)}>\alpha_\epsilon^r f+r\alpha_\epsilon^r n_0^{3/8}\bigr)\leq\prob\bigl(N^{(r)}>\mean[N^{(r)}]+\alpha_\epsilon^r n_0^{3/8}\bigr),
\end{equation*}
where we in the inequality have used~\eqref{meannrbound}, so it holds uniformly in~$r$ for large enough~$n$. We will now apply Chernoff's bound:
\begin{equation}
\label{chernoff}
    \prob\bigl(N>\mean[N]+\delta\mean[N]\bigr)\leq e^{-\tfrac{1}{3}\delta^2\mean[N]},
\end{equation}
when $N$ is binomially distributed and $0<\delta<1$. Taking in our case $N=N^{(r)}$ and
\begin{equation*}
    \delta=\frac{\alpha_\epsilon^r n_0^{3/8}}{\mean[N^{(r)}]},
\end{equation*}
we find that, as long as $r\in[R]$, \eqref{boundur} and~\eqref{lowerboundfr} give us
\begin{equation*}
    \mean[N^{(r)}]=f_{r-1}\frac{mn_r}{n_{r-1}(n_{r-1}-1)-\tfrac{1}{4}f_{r-1}(f_{r-1}-1)}\geq\frac{f}{n_0^{1/16}}\cdot\frac{m(n_0-Cf)}{n_0^2},
\end{equation*}
so using~\eqref{betapowerR} in addition to the above we get for all $r\in[R]$:
\begin{equation*}
    \delta=\frac{\alpha_\epsilon^r n_0^{3/8}}{\mean[N^{(r)}]}\leq\frac{1}{n_0^{1/16}}\cdot\frac{\sqrt{n_0}}{f}\cdot\frac{n_0}{m}\cdot\frac{n_0}{n_0-Cf}\longrightarrow 0\quad\text{as $n\to\infty$},
\end{equation*}
so naturally $\delta<1$ for all $r\in [R]$ for large enough~$n$. Hence, Chernoff's bound yields
\begin{equation*}
    \prob\bigl(N^{(r)}>\mean[N^{(r)}]+\alpha_\epsilon^r n_0^{3/8}\bigr)\leq\exp\biggl(\frac{-\alpha_\epsilon^{2r}n_0^{3/4}}{3\mean[N^{(r)}]}\biggr)
\end{equation*}
for all $r\in[R]$ when $n$ is large enough. We saw in~\eqref{meanNrupperbound} that $\mean[N^{(r)}]\leq f_{r-1}$ for all $r\in\mathbb N$ when $n$ is large enough, and $f_{r-1}\leq Cf$ for all $r,n\in\mathbb N$ by~\eqref{boundsumfk}. On the other hand, $\alpha_\epsilon^{2r}\geq\alpha_\epsilon^{2R}\geq n_0^{-1/8}$ for all $r\in[R]$ and $n\in\mathbb N$ by~\eqref{betapowerR}, so taken all together we get when $n$ is large enough:
\begin{equation*}
    \xi=\max_{r\in[R]}\prob\bigl(N^{(r)}>f_r\bigr)\leq\max_{r\in[R]}\exp\biggl(\frac{-\alpha_\epsilon^{2r}n_0^{3/4}}{3\mean[N^{(r)}]}\biggr)\leq\exp\biggl(\frac{-n_0^{5/8}}{3Cf}\biggr),
\end{equation*}
and of course $n_0^{5/8}/(3Cf)$ is asymptotic to $n_0^{1/8}/(3C\beta)$ as $n\to\infty$, readily yielding $\log(n_0)\xi\to 0$, completing the proof of~\ref{L4} and thus the entire proof of the lower bound.

\subsection{Upper bound}
We now give a proof of the upper bound
\begin{equation*}
    \limsup_{n\to\infty}\prob(\varphi_\mathcal L\in\SAT)\leq\exp\biggl(\frac{-\beta^2\alpha}{4(1-\alpha)}\biggr).
\end{equation*}
Taken together with the lower bound, this will complete the proof of the case $k=2$ in Theorem~\ref{mainthm} when $\alpha>0$. We are still assuming that $\varphi$ is a random $2$-CNF formula with $m=m(n)$ clauses and $n_0=n_0(n)$ variables, where $n_0\to\infty$ and $m/n_0\to\alpha$, and that $\lvert\mathcal L\rvert=f=f(n)$, where $f/\sqrt{n_0}\to\beta$. A priori we have $0<\alpha<1$ and $0\leq\beta\leq\infty$, but the case $\beta=0$ is trivial, so we assume without loss of generality that $\beta>0$. We will also assume that $\beta<\infty$, since the case $\beta=\infty$ follows. Indeed, let for a given $T>0$ $\psi$ denote a random $2$-CNF formula with $n_0$ variables and $\lfloor T\sqrt{n_0}\rfloor$ clauses. Then $f\geq\lfloor T\sqrt{n_0}\rfloor$ for large enough~$n$ if $\beta=\infty$, so by Lemma~\ref{probdecreaseinm} we get
\begin{equation*}
    \limsup_{n\to\infty}\prob(\varphi_\mathcal L\in\SAT)\leq\limsup_{n\to\infty}\prob(\psi_\mathcal L\in\SAT)\leq\exp\biggl(\frac{-T^2\alpha}{4(1-\alpha)}\biggr).
\end{equation*}
Letting $T\to\infty$ gives the desired result. Hence, assume $0<\beta<\infty$.

The method for proving the upper bound will be similar to the one for the lower bound. We ``reset'' the notation from the lower bound and begin anew. Let $0<\epsilon<1$ be given. We prove the bound
\begin{equation*}
    \limsup_{n\to\infty}\prob(\varphi_\mathcal L\in\SAT)\leq\exp\biggl(\frac{-\beta^2(1-\epsilon)\alpha}{4[1-(1-\epsilon)\alpha]}\biggr),
\end{equation*}
from which the desired bound follows by taking $\epsilon\to 0$. Put $\alpha_\epsilon\coloneqq (1-\epsilon)\alpha$.

As in the lower bound we assume without loss of generality that $\mathcal L=[n_0]\setminus[n_0-f]$ by Lemma~\ref{probdecreaseinc}. Let $K=\{ 0,1,2,\star\}$, and decompose $\varphi_\mathcal L$ into $\varphi_1$ and~$\varphi_2$ with corresponding sets $\mathcal C=(\mathcal C^{(1)}_k)_{k\in K}$ according to~\eqref{defnmk}. Define
\begin{gather*}
    n_1\coloneqq n_0-f,\quad f_1\coloneqq\bigl\lfloor\alpha_\epsilon f-\alpha_\epsilon n_0^{3/8}\bigr\rfloor,\quad w\coloneqq\bigl\lfloor n_0^{1/4}\bigr\rfloor, \\
    m_1\coloneqq\bigl\lceil m-3\alpha\bigl(f+n_0^{3/8}\bigr)\bigr\rceil,\quad\mathcal L_1\coloneqq [n_1]\setminus [n_1-(f_1-w)],
\end{gather*}
where $\lceil\cdot\rceil$ denotes the ceiling function (rounding up), and let $\lambda^{(1)}$ denote a random $1$-CNF formula with $f_1$ clauses and $n_1$ variables, and let $\Lambda^{(1)}$ denote the set of $f_1$ random literals defining~$\lambda^{(1)}$, and let finally $\varphi^{(1)}$ denote a random $2$-CNF formula with $m_1$ clauses and $n_1$ variables.

We get from~\eqref{phiLsatiff} that
\begin{align*}
    \prob(\varphi_\mathcal L\in\SAT)={}&\prob\bigl(\lvert\mathcal C^{(1)}_0\rvert=0,\,\varphi_1\in\SAT,\, (\varphi_2)_{\varphi_1}\in\SAT\bigr) \\
    \leq{}&\prob\bigl(\lvert\mathcal C^{(1)}_0\rvert=0,\,\lvert\mathcal C^{(1)}_1\rvert\geq f_1,\,\lvert\mathcal C^{(1)}_2\rvert\geq m_1,\,\varphi_1\in\SAT,\, (\varphi_2)_{\varphi_1}\in\SAT\bigr) \\
    &+\prob\bigl(\lvert\mathcal C^{(1)}_1\rvert<f_1\bigr)+\prob\bigl(\lvert\mathcal C^{(1)}_2\rvert<m_1\bigr).
\end{align*}
If $\mathfrak M$ denotes the set of partitions $(M_k)_{k\in K}$ of $[m]$ such that $\lvert M_0\rvert=0$, $\lvert M_1\rvert\geq f_1$, and $\lvert M_2\rvert\geq m_1$, we get
\begin{align*}
    \MoveEqLeft\prob\bigl(\lvert\mathcal C^{(1)}_0\rvert=0,\,\lvert\mathcal C^{(1)}_1\rvert\geq f_1,\,\lvert\mathcal C^{(1)}_2\rvert\geq m_1,\,\varphi_1\in\SAT,\, (\varphi_2)_{\varphi_1}\in\SAT\bigr) \\
    &=\sum_{M\in\mathfrak M}\prob\bigl(\varphi_1\in\SAT,\, (\varphi_2)_{\varphi_1}\in\SAT\bigm\vert\mathcal C=M\bigr)\prob(\mathcal C=M).
\end{align*}
For a given $M\in\mathfrak M$ we now consider for $k=1,2$ an auxiliary random $k$-CNF formula~$\psi_k$ with $\lvert M_k\rvert$ clauses and $n_1$ variables, such that $\psi_1$ and $\psi_2$ are independent. Then, by Lemma~\ref{distofphiandm},
\begin{equation*}
    \prob\bigl(\varphi_1\in\SAT,\, (\varphi_2)_{\varphi_1}\in\SAT\bigm\vert\mathcal C=M\bigr)=\prob(\psi_1\in\SAT,\, (\psi_2)_{\psi_1}\in\SAT).
\end{equation*}
The next step of this argument is a bit more involved than in the lower bound, since here, the number of \emph{distinct} clauses in~$\psi_1$ might be lower than~$\lvert M_1\rvert$ owing to duplicates, and thus, $\psi_1$ might fix fewer than $\lvert M_1\rvert$ variables in~$\psi_2$. This was of course also the case in the lower bound, but fixing the maximum number of variables only reduced the probability of satisfiability even further; here, we must deal with this issue proper. Let $\Lambda$ denote the set of random literals defining~$\psi_1$, i.e.\ $\psi_1=\min(\Lambda)$. Notice firstly that, since $\lvert M_1\rvert\geq f_1$, $\psi_1$ has more clauses than~$\lambda^{(1)}$, and hence
\begin{equation*}
    \prob\bigl(\lvert\Lambda\rvert<f_1-w\bigr)\leq\prob\bigl(\lvert\Lambda^{(1)}\rvert<f_1-w\bigr).
\end{equation*}
Using this, we now write
\begin{align*}
    \prob(\psi_1\in\SAT,\, (\psi_2)_{\psi_1}\in\SAT)\leq{}&\prob\bigl(\psi_1\in\SAT,\, (\psi_2)_{\psi_1}\in\SAT,\,\lvert\Lambda\rvert\geq f_1-w\bigr) \\
    &+\prob\bigl(\lvert\Lambda^{(1)}\rvert<f_1-w\bigr).
\end{align*}
Letting $\mathcal B$ denote the set of all satisfiable $1$-CNF formulas with at least $f_1-w$ (distinct) clauses and $n_1$ variables, we find that
\begin{align*}
    \prob\bigl(\psi_1\in\SAT,\, (\psi_2)_{\psi_1}\in\SAT,\,\lvert\Lambda\rvert\geq f_1-w\bigr)&=\sum_{h\in\mathcal B}\prob(\psi_1=h,\, (\psi_2)_h\in\SAT) \\
    &=\sum_{h\in\mathcal B}\prob(\psi_1=h)\prob((\psi_2)_h\in\SAT) \\
    &\leq\prob\bigl(\psi_1\in\SAT,\,\lvert\Lambda\rvert\geq f_1-w\bigr)\prob\bigl((\psi_2)_{\mathcal L_1}\in\SAT\bigr) \\
    &\leq\prob\bigl(\psi_1\in\SAT\bigr)\prob\bigl((\psi_2)_{\mathcal L_1}\in\SAT\bigr) \\
    &\leq\prob\bigl(\lambda^{(1)}\in\SAT\bigr)\prob\bigl(\psi^{(1)}_{\mathcal L_1}\in\SAT\bigr),
\end{align*}
where we in the first inequality use Lemma~\ref{probdecreaseinc} and in the final inequality use Lemma~\ref{probdecreaseinm}. This yields
\begin{equation*}
    \prob\bigl(\psi_1\in\SAT,\, (\psi_2)_{\psi_1}\in\SAT\bigr)\leq\prob\bigl(\lambda^{(1)}\in\SAT\bigr)\prob\bigl(\varphi^{(1)}_{\mathcal L_1}\in\SAT\bigr)+\prob\bigl(\lvert\Lambda^{(1)}\rvert<f_1-w\bigr),
\end{equation*}
and thus,
\begin{align*}
    \MoveEqLeft[3]\prob\bigl(\lvert\mathcal C^{(1)}_0\rvert=0,\,\lvert\mathcal C^{(1)}_1\rvert\geq f_1,\,\lvert\mathcal C^{(1)}_2\rvert\geq m_1,\,\varphi_1\in\SAT,\, (\varphi_2)_{\varphi_1}\in\SAT\bigr) \\
    \leq{}&\prob\bigl(\lvert\mathcal C^{(1)}_0\rvert=0,\,\lvert\mathcal C^{(1)}_1\rvert\geq f_1,\,\lvert\mathcal C^{(1)}_2\rvert\geq m_1\bigr) \\
    &\times\bigl[\prob\bigl(\lambda^{(1)}\in\SAT\bigr)\prob\bigl(\varphi^{(1)}_{\mathcal L_1}\in\SAT\bigr)+\prob\bigl(\lvert\Lambda^{(1)}\rvert<f_1-w\bigr)\bigr] \\
    \leq{}&\prob\bigl(\lvert\mathcal C^{(1)}_0\rvert=0\bigr)\prob\bigl(\lambda^{(1)}\in\SAT\bigr)\prob\bigl(\varphi^{(1)}_{\mathcal L_1}\in\SAT\bigr)+\prob\bigl(\lvert\Lambda^{(1)}\rvert<f_1-w\bigr).
\end{align*}
All in all, we get
\begin{align*}
    \prob(\varphi_\mathcal L\in\SAT)\leq{}&\prob\bigl(\lvert\mathcal C^{(1)}_0\rvert=0\bigr)\prob\bigl(\lambda^{(1)}\in\SAT\bigr)\prob\bigl(\varphi^{(1)}_{\mathcal L_1}\in\SAT\bigr) \\
    &+\prob\bigl(\lvert\mathcal C^{(1)}_1\rvert<f_1\bigr)+\prob\bigl(\lvert\mathcal C^{(1)}_2\rvert<m_1\bigr)+\prob\bigl(\lvert\Lambda^{(1)}\rvert<f_1-w\bigr).
\end{align*}
We now repeat the procedure on~$\varphi^{(1)}_{\mathcal L_1}$. Put again
\begin{equation*}
    R\coloneqq\bigl\lfloor c\log(n_0)\bigr\rfloor,\quad\text{where}\quad c\coloneqq\frac{1}{16\log(1/\alpha_\epsilon)},
\end{equation*}
and define for each $r\in\mathbb N$:
\begin{equation*}
    n_r\coloneqq n_0-f-\sum_{k=1}^{r-1}(f_k-w),\quad f_r\coloneqq\bigl\lfloor\alpha_\epsilon^r f-r\alpha_\epsilon^r n_0^{3/8}\bigr\rfloor,\quad m_r\coloneqq\biggl\lceil m-\frac{3}{1-\epsilon}(f+n_0^{3/8})\sum_{k=1}^r\alpha_\epsilon^k\biggr\rceil,
\end{equation*}
and finally $\mathcal L_r\coloneqq [n_r]\setminus[n_r-(f_r-w)]$, and let $\mathcal C^{(r)}_k$ denote the set~$\mathcal C_k$ from~\eqref{defnmk} with $\varphi^{(r-1)}$ in place of~$\varphi$ and $\mathcal L_{r-1}$ in place of~$\mathcal L$ for each $k\in K$, and denote finally by $\lambda^{(r)}$ a random $1$-CNF formula with $f_r$ clauses and $n_r$ variables, and by $\Lambda^{(r)}$ the set of random literals defining~$\lambda^{(r)}$, and finally by $\varphi^{(r)}$ a random $2$-CNF formula with $m_r$ clauses and $n_r$ variables. Redoing the argument above $R$ times, and finally bounding $\prob(\varphi^{(R)}_{\mathcal L_R}\in\SAT)\leq 1$, we get
\begin{align*}
    \prob(\varphi_\mathcal L\in\SAT)\leq{}&\prod_{r=1}^R\prob\bigl(\lvert\mathcal C^{(r)}_0\rvert=0\bigr)\prod_{r=1}^R\prob\bigl(\lambda^{(r)}\in\SAT\bigr) \\
    &+\sum_{r=1}^R\prob\bigl(\lvert\mathcal C^{(r)}_1\rvert<f_r\bigr)+\sum_{r=1}^R\prob\bigl(\lvert\mathcal C^{(r)}_2\rvert<m_r\bigr)+\sum_{r=1}^R\prob\bigl(\lvert\Lambda^{(r)}\rvert<f_r-w\bigr).
\end{align*}
The upper bound then follows after a verification of the following points:
\begin{enumerate}[label=(U\arabic*)]
    \item\label{U1} $\lim_{n\to\infty}\prod_{r=1}^R\prob\bigl(\lambda^{(r)}\in\SAT\bigr)=\exp\biggl(\frac{-\beta^2}{4}\sum_{r=1}^\infty\alpha_\epsilon^{2r}\biggr)$,
    \item\label{U2} $\lim_{n\to\infty}\prod_{r=1}^R\prob\bigl(\lvert\mathcal C^{(r)}_0\rvert=0\bigr)\leq\exp\biggl(\frac{-\beta^2}{4}\sum_{r=1}^\infty\alpha_\epsilon^{2r-1}\biggr)$,
    \item\label{U3} $\lim_{n\to\infty}\sum_{r=1}^R\prob\bigl(\lvert\mathcal C^{(r)}_1\rvert<f_r\bigr)=0$,
    \item\label{U4} $\lim_{n\to\infty}\sum_{r=1}^R\prob\bigl(\lvert\mathcal C^{(r)}_2\rvert<m_r\bigr)=0$,
    \item\label{U5} $\lim_{n\to\infty}\sum_{r=1}^R\prob\bigl(\lvert\Lambda^{(r)}\rvert<f_r-w\bigr)=0$.
\end{enumerate}

We initially note that, by Lemma~\ref{distofphiandm},
\begin{equation}
\label{distofmrupper}
    \bigl(\lvert\mathcal C^{(r)}_0\rvert,\lvert\mathcal C^{(r)}_1\rvert,\lvert\mathcal C^{(r)}_2\rvert,\lvert\mathcal C^{(r)}_\star\rvert\bigr)\sim\mathrm{Multinomial}\bigl(m_{r-1},(p^{(r)}_0,p^{(r)}_1,p^{(r)}_2,p^{(r)}_\star)\bigr),
\end{equation}
where we now have for $r\geq 2$
\begin{equation*}
    p^{(r)}_0=\frac{(f_{r-1}-w)(f_{r-1}-w-1)}{4n_{r-1}(n_{r-1}-1)},\quad p^{(r)}_1=\frac{(f_{r-1}-w)n_r}{n_{r-1}(n_{r-1}-1)},\quad p^{(r)}_2=\frac{n_r(n_r-1)}{n_{r-1}(n_{r-1}-1)},
\end{equation*}
and lastly $p^{(r)}_\star=1-p^{(r)}_0-p^{(r)}_1-p^{(r)}_2$. The expressions for~$p^{(1)}_k$, $k\in\{ 0,1,2,\star\}$, are the same, except we do not subtract~$w$ from~$f$ in this case.

We of course still have
\begin{equation}
\label{computelimfrupper}
    \lim_{n\to\infty}\frac{f_r}{\sqrt{n_0}}=\alpha_\epsilon^r\beta
\end{equation}
for all $r\in\mathbb N$, so it follows immediately from Lemma~\ref{fewduplicateliterals} that
\begin{equation*}
    \sum_{r=1}^R\prob\bigl(\lvert\Lambda^{(r)}\rvert<f_r-w\bigr)\leq\sum_{r=1}^R\frac{4\alpha_\epsilon^{2r}\beta^2}{w}\leq 4\beta^2\frac{R}{w}\longrightarrow 0\quad\text{as $n\to\infty$},
\end{equation*}
proving~\ref{U5}.

Putting $C\coloneqq 1/(1-\alpha_\epsilon)$ and $C^\prime\coloneqq 6C/(1-\epsilon)$, we have
\begin{equation}
    f+\sum_{k=1}^{r-1}(f_k-w)\leq\sum_{k=0}^{r-1}f_k\leq\sum_{k=0}^{r-1}\alpha_\epsilon^k f\leq Cf
\end{equation}
for all $r\in\mathbb N$, and therefore $n_0-Cf\leq n_r\leq n_0$. We also immediately find $m_r\leq m$, and $f/n_0^{3/8}\to\infty$ as $n\to\infty$, so $n_0^{3/8}\leq f$ for large enough~$n$, and after this point
\begin{equation*}
    \frac{3}{1-\epsilon}\bigl(f+n_0^{3/8}\bigr)\sum_{k=1}^r\alpha_\epsilon^r\leq\frac{3C}{1-\epsilon}\bigl(f+n_0^{3/8}\bigr)\leq C^\prime f,
\end{equation*}
yielding $m_r\geq m-C^\prime f$ and hence
\begin{equation}
\label{computelimurgr}
    \lim_{n\to\infty}\frac{n_r}{n_0}=1,\quad\text{and}\quad\lim_{n\to\infty}\frac{m_r}{n_0}=\alpha
\end{equation}
for all $r\in\mathbb N$. Of course, we still have
\begin{equation}
\label{betapowerRupper}
    \frac{1}{n_0^{1/16}}\leq\alpha_\epsilon^R\leq\frac{1}{\alpha_\epsilon n_0^{1/16}}
\end{equation}
by the same argument as in~\eqref{betapowerR}. Now, for a lower bound on $f_r$ we notice that when $r\in [R]$ we have $r\alpha_\epsilon^r n_0^{3/8}\leq R\alpha_\epsilon^r n_0^{3/8}$, and
\begin{equation*}
    \frac{\tfrac{1}{2}\alpha_\epsilon^rf}{R\alpha_\epsilon^r n_0^{3/8}}=\frac{f}{2Rn_0^{3/8}}\longrightarrow\infty\quad\text{as $n\to\infty$},
\end{equation*}
where the convergence is uniform in~$r$, so we have $r\alpha_\epsilon^r n_0^{3/8}\leq\tfrac{1}{2}\alpha_\epsilon^rf-1$ for all $r\in[R]$ when $n$ is large enough, and thus
\begin{equation}
\label{lowerboundfrupper}
    f_r=\bigl\lfloor\alpha_\epsilon^r f-r\alpha_\epsilon^r n_0^{3/8}\bigr\rfloor\geq\frac{\alpha_\epsilon^r f}{2}\geq\frac{f}{2n_0^{1/16}}
\end{equation}
for all $r\in[R]$ when $n$ is large enough, where the last inequality comes from~\eqref{betapowerRupper}. Further, since $f/n_0^{1/16}$ is asymptotic to $\beta n_0^{7/16}$ as $n\to\infty$, we see that
\begin{equation*}
    w\leq n_0^{1/4}\leq\frac{f}{4n_0^{1/16}}\leq\frac{f_r}{2}
\end{equation*}
for all $r\in[R]$ when $n$ is large enough, using in the end~\eqref{lowerboundfrupper}, and from these inequalities follows
\begin{equation}
\label{boundfrminusw}
    f_r-w\geq\tfrac{1}{2}f_r.
\end{equation}

For~\ref{U1} we see that, similarly to~\ref{L2}, since $\lambda^{(r)}$ is a random $1$-CNF formula with $f_r$ clauses and $n_r$ variables, Lemma~\ref{random1sat} together with~\eqref{computelimfrupper} and~\eqref{computelimurgr} gives
\begin{equation*}
    \lim_{n\to\infty}\prob\bigl(\lambda^{(r)}\in\SAT\bigr)=e^{-\tfrac{1}{4}\beta^2\alpha_\epsilon^{2r}}
\end{equation*}
for all $r\in\mathbb N$, and to complete the proof of~\ref{U1} we need only give a uniform (in large~$n$) summable (in~$r$) lower bound on $\log\prob(\lambda^{(r)}\in\SAT)$ to be able to apply dominated convergence. By the exact same argument as in the proof of~\ref{L2} we get
\begin{equation*}
    \log\prob\bigl(\lambda^{(r)}\in\SAT\bigr)\geq\frac{-f_r^2}{2n_{r+1}},
\end{equation*}
and we also have
\begin{equation*}
    \lim_{n\to\infty}\frac{f^2}{2(n_0-Cf)}=\frac{\beta^2}{2},
\end{equation*}
so using $f_r\leq\alpha_\epsilon^r f$ and $n_{r+1}\geq n_0-Cf$, we conclude that
\begin{equation*}
    \frac{-f_r^2}{2n_{r+1}}\geq\alpha_\epsilon^{2r}\frac{-f^2}{2(n_0-Cf)}\geq-\alpha_\epsilon^{2r}\beta^2,
\end{equation*}
where the last inequality holds for large enough~$n$ independent of~$r$.

Now for~\ref{U2} we get from~$\eqref{distofmrupper}$ that $\lvert\mathcal C^{(r)}_0\rvert\sim\mathrm{Binomial}(m_{r-1},p^{(r)}_0)$, so similarly to the argument in the proof of~\ref{L3}:
\begin{equation*}
    \prob\bigl(\lvert\mathcal C^{(r)}_0\rvert=0\bigr)=\bigl(1-p^{(r)}_0\bigr)^{m_{r-1}}\longrightarrow e^{-\tfrac{1}{4}\beta^2\alpha_\epsilon^{2(r-1)}\alpha}\leq e^{-\tfrac{1}{4}\beta^2\alpha_\epsilon^{2r-1}}
\end{equation*}
for all $r\in\mathbb N$, and to complete the proof of~\ref{U2} we need only give a uniform (in large~$n$) summable (in~$r$) lower bound on $\log\prob(\lvert\mathcal C^{(r)}_0\rvert=0)$. To this end, we again notice that
\begin{equation*}
    p^{(r)}_0\leq\frac{f^2}{4(n_0-Cf-1)^2}\longrightarrow 0
\end{equation*}
uniformly in~$r$, so $p^{(r)}_0\leq\tfrac{1}{2}$ for all $r\in\mathbb N$ when $n$ is large enough, and
\begin{equation*}
    m_{r-1}p^{(r)}_0\leq\alpha_\epsilon^{2(r-1)}\frac{mf^2}{4(u-Cf-1)^2}\longrightarrow\frac{1}{4}\alpha_\epsilon^{2(r-1)}\alpha\beta^2\leq\frac{1}{4}\alpha_\epsilon^{2r-1}\beta^2,
\end{equation*}
so $m_{r-1}p^{(r)}_0\leq\tfrac{1}{2}\alpha_\epsilon^{2r-1}\beta^2$ for all $r\in\mathbb N$ when $n$ is large enough, and hence we conclude using~\eqref{canonlogbound} that
\begin{equation*}
    \log\prob\bigl(\lvert\mathcal C^{(r)}_0\rvert=0\bigr)=m_{r-1}\log\bigl(1-p^{(r)}_0\bigr)\geq\frac{m_{r-1}p^{(r)}_0}{p^{(r)}_0-1}\geq-\alpha_\epsilon^{2r-1}\beta^2
\end{equation*}
for all $r\in\mathbb N$ when $n$ is large enough.

Now for~\ref{U3}. We get from~\eqref{distofmrupper} that $\lvert\mathcal C^{(r)}_1\rvert\sim\mathrm{Binomial}(m_{r-1},p^{(r)}_1)$, so we see that
\begin{equation*}
    \mean\bigl[\lvert\mathcal C^{(r)}_1\rvert\bigr]=
    \begin{dcases}
        \frac{m_{r-1}(f_{r-1}-w)n_r}{n_{r-1}(n_{r-1}-1)},&\text{if $r\geq 2$}, \\
        \frac{mfn_1}{n_0(n_0-1)},&\text{if $r=1$},
    \end{dcases}
\end{equation*}
and
\begin{equation*}
    \frac{m_{r-1}n_r}{n_{r-1}(n_{r-1}-1)}\geq\frac{(m-C^\prime f)(n_0-Cf)}{n_0^2}\longrightarrow\alpha,
\end{equation*}
so $m_{r-1}n_r/n_{r-1}(n_{r-1}-1)\geq\alpha_\epsilon$ for all $r\in\mathbb N$ when $n$ is large enough. Hence, since
\begin{equation*}
    f_{r-1}-w=\bigl\lfloor\alpha_\epsilon^{r-1}f-(r-1)\alpha_\epsilon^{r-1}n_0^{3/8}\bigr\rfloor-\bigl\lfloor n_0^{1/4}\bigr\rfloor\geq\alpha_\epsilon^{r-1}f-(r-1)\alpha_\epsilon^{r-1}n_0^{3/8}-2n_0^{1/4},
\end{equation*}
and of course $f\geq f-2n_0^{1/4}$ for the case $r=1$, we get
\begin{equation*}
    \mean\bigl[\lvert\mathcal C^{(r)}_1\rvert\bigr]\geq\alpha_\epsilon^r f-r\alpha_\epsilon^r n_0^{3/8}+\alpha_\epsilon^r n_0^{3/8}-2\alpha_\epsilon n_0^{1/4}
\end{equation*}
for all $r\in\mathbb N$ when $n$ is large enough, which is the same as the second inequality in
\begin{equation}
\label{meancr1bound}
    f_r\leq\alpha_\epsilon^r f-r\alpha_\epsilon^r n_0^{3/8}\leq\mean\bigl[\lvert\mathcal C^{(r)}_1\rvert\bigr]-\alpha_\epsilon^r n_0^{3/8}+2\alpha_\epsilon n_0^{1/4}.
\end{equation}
Now, if $r\in [R]$, then $\alpha_\epsilon^r\geq n_0^{-1/16}$ by~\eqref{betapowerRupper}, and of course $2n_0^{1/4}\leq\tfrac{1}{2}n_0^{5/16}$ for large enough~$n$, so in that case
\begin{equation*}
    -\alpha_\epsilon^r n_0^{3/8}+2\alpha_\epsilon n_0^{1/4}\leq-n_0^{5/16}+2n_0^{1/4}\leq -\tfrac{1}{2}n_0^{5/16}
\end{equation*}
for large enough~$n$. We now get from~\eqref{meancr1bound} and the above that
\begin{equation*}
    \prob\bigl(\lvert\mathcal C^{(r)}_1\rvert<f_r\bigr)\leq\prob\bigl(\lvert\mathcal C^{(r)}_1\rvert<\mean\bigl[\lvert\mathcal C^{(r)}_1\rvert\bigr]-\tfrac{1}{2}n_0^{5/16}\bigr)
\end{equation*}
for all $r\in[R]$ when $n$ is large enough. Next, we wish to apply the following Chernoff bound:
\begin{equation*}
    \prob\bigl(N<\mean[N]-\delta\mean[N]\bigr)\leq e^{-\tfrac{1}{2}\delta^2\mean[N]}
\end{equation*}
when $N$ is binomially distributed and $0<\delta<1$. Taking in our case $N=\lvert\mathcal C^{(r)}_1\rvert$ and
\begin{equation*}
    \delta=\frac{n_0^{5/16}}{2\mean\bigl[\lvert\mathcal C^{(r)}_1\rvert\bigr]}\leq\frac{n_0^{5/16}}{2\alpha_\epsilon (f_{r-1}-w)}\leq\frac{n_0^{5/16}}{\alpha_\epsilon f_{r-1}}\leq\frac{2n_0^{3/8}}{\alpha_\epsilon f}\longrightarrow 0,
\end{equation*}
where the first inequality comes from the initial bound on $\mean[\lvert\mathcal C^{(r)}_1\rvert]$ and holds for all $r\in\mathbb N$ when $n$ is large enough, the second comes from~\eqref{boundfrminusw} and holds for all $r\in[R]$ when $n$ is large enough, and the final comes from~\eqref{lowerboundfrupper} and holds for all $r\in[R]$ when $n$ is large enough. Thus, for all $r\in[R]$ it holds for large enough~$n$ that $\delta<1$, so Chernoff's bound gives
\begin{equation*}
    \prob\bigl(\lvert\mathcal C^{(r)}_1\rvert<\mean\bigl[\lvert\mathcal C^{(r)}_1\rvert\bigr]-\tfrac{1}{2}n_0^{5/16}\bigr)\leq\exp\biggl(\frac{-n_0^{5/8}}{8\mean\bigl[\lvert\mathcal C^{(r)}_1\rvert\bigr]}\biggr).
\end{equation*}
Lastly, we note that
\begin{equation*}
    \mean\bigl[\lvert\mathcal C^{(r)}_1\rvert\bigr]\leq f\frac{mn_0}{(n_0-Cf-1)^2},
\end{equation*}
and $mn_0/(n_0-Cf-1)^2\to\alpha$ as $n\to\infty$, so for large enough~$n$ it holds for all $r\in\mathbb N$ that $\mean[\lvert\mathcal C^{(r)}_1\rvert]\leq 2\alpha f$, and using this yields
\begin{equation*}
    \exp\biggl(\frac{-n_0^{5/8}}{8\mean\bigl[\lvert\mathcal C^{(r)}_1\rvert\bigr]}\biggr)\leq\exp\biggl(\frac{-n_0^{5/8}}{16\alpha f}\biggr).
\end{equation*}
Putting it all together, we get
\begin{equation*}
    \sum_{r=1}^R \prob\bigl(\lvert\mathcal C^{(r)}_1\rvert<f_r\bigr)\leq R\exp\biggl(\frac{-n_0^{5/8}}{16\alpha f}\biggr)\longrightarrow 0\quad\text{as $n\to\infty$},
\end{equation*}
as desired.

Now finally for~\ref{U4}. We see for all $r\in\mathbb N$ that, since $\lvert\mathcal C^{(r)}_2\rvert$ can only attain integer values,
\begin{equation*}
    \prob\bigl(\lvert\mathcal C^{(r)}_2\rvert<m_r\bigr)=\prob\Bigl(\lvert\mathcal C^{(r)}_2\rvert<m-\frac{3}{1-\epsilon}\bigl(f+n_0^{3/8}\bigr)\sum_{k=1}^r\alpha_\epsilon^k\Bigr),
\end{equation*}
and
\begin{equation*}
    m-\frac{3}{1-\epsilon}\bigl(f+n_0^{3/8}\bigr)\sum_{k=1}^r\alpha_\epsilon^k\leq m_{r-1}-\frac{3}{1-\epsilon}\bigl(f+n_0^{3/8}\bigr)\alpha_\epsilon^r,
\end{equation*}
so
\begin{equation*}
    \prob\Bigl(\lvert\mathcal C^{(r)}_2\rvert<m-\frac{3}{1-\epsilon}\bigl(f+n_0^{3/8}\bigr)\sum_{k=1}^r\alpha_\epsilon^k\Bigr)\leq\prob\Bigl(m_{r-1}-\lvert\mathcal C^{(r)}_2\rvert>\frac{3\alpha_\epsilon^r}{1-\epsilon}\bigl(f+n_0^{3/8}\bigr)\Bigr).
\end{equation*}
Now, from~\eqref{distofmrupper} we find that $m_{r-1}-\lvert\mathcal C^{(r)}_2\rvert=\lvert\mathcal C^{(r)}_0\rvert+\lvert\mathcal C^{(r)}_1\rvert+\lvert\mathcal C^{(r)}_\star\rvert$, and
\begin{equation*}
    \lvert\mathcal C^{(r)}_0\rvert+\lvert\mathcal C^{(r)}_1\rvert+\lvert\mathcal C^{(r)}_\star\rvert\sim\mathrm{Binomial}\bigl(m_{r-1},1-p^{(r)}_2\bigr).
\end{equation*}
Put for convenience $N^{(r)}\coloneqq\lvert\mathcal C^{(r)}_0\rvert+\lvert\mathcal C^{(r)}_1\rvert+\lvert\mathcal C^{(r)}_\star\rvert$, and notice that for $r\geq 2$, as $n_r=n_{r-1}-(f_{r-1}-w)$,
\begin{equation*}
    p^{(r)}_2=\frac{n_r(n_r-1)}{n_{r-1}(n_{r-1}-1)}=1-(f_{r-1}-w)\frac{2n_{r-1}-(f_{r-1}-w)-1}{n_{r-1}(n_{r-1}-1)},
\end{equation*}
giving us
\begin{equation*}
    1-p^{(r)}_2=(f_{r-1}-w)\frac{2n_{r-1}-(f_{r-1}-w)-1}{n_{r-1}(n_{r-1}-1)}\leq\frac{2}{n_{r-1}-1}f_{r-1}\leq\frac{2}{n_0-Cf-2}f_{r-1},
\end{equation*}
and of course, for $r=1$ this is the last inequality in
\begin{equation*}
    1-p^{(1)}_2=1-\frac{n_1(n_1-1)}{n_0(n_0-1)}=\frac{2n_0-f-1}{n_0(n_0-1)}f\leq\frac{2}{n_0-1}f\leq\frac{2}{n_0-Cf-2}f,
\end{equation*}
which holds, so since $2m/(n_0-Cf-2)\to 2\alpha$, we find that
\begin{equation*}
    \mean[N^{(r)}]=m_{r-1}\bigl(1-p^{(r)}_2\bigr)\leq\frac{2m}{n_0-Cf-2}f_{r-1}\leq 3\alpha f_{r-1}=\frac{3\alpha_\epsilon}{1-\epsilon}f_{r-1}
\end{equation*}
holds for all $r\in\mathbb N$ when $n$ is large enough, and since $f_{r-1}\leq\alpha_\epsilon^{r-1}f-(r-1)\alpha_\epsilon^{r-1}n_0^{3/8}$, we get
\begin{equation}
\label{boundonmeansumm}
    \mean[N^{(r)}]\leq\frac{3\alpha_\epsilon^r}{1-\epsilon}f-(r-1)\frac{3\alpha_\epsilon^r}{1-\epsilon}n_0^{3/8}
\end{equation}
uniformly in~$r$ when $n$ is large enough. Now, continuing our calculations from before and using~\eqref{boundonmeansumm}, we get
\begin{align*}
    \prob\Bigl(m_{r-1}-\lvert\mathcal C^{(r)}_2\rvert>\frac{3\alpha_\epsilon^r}{1-\epsilon}\bigl(f+n_0^{3/8}\bigr)\Bigr)&=\prob\Bigl(N^{(r)}>\frac{3\alpha_\epsilon^r}{1-\epsilon}f+\frac{3\alpha_\epsilon^r}{1-\epsilon}n_0^{3/8}\Bigr) \\
    &\leq\prob\Bigl(N^{(r)}>\mean[N^{(r)}]+r\frac{3\alpha_\epsilon^r}{1-\epsilon}n_0^{3/8}\Bigr)
\end{align*}
for all $r\in\mathbb N$ as long as $n$ is large enough. We are now in a position to apply Chernoff's bound~\eqref{chernoff}, which will require a lower bound on $\mean[N^{(r)}]$. For $r\geq 2$ we find that
\begin{equation*}
    1-p^{(r)}_2=\frac{2n_{r-1}-(f_{r-1}-w)-1}{n_{r-1}(n_{r-1}-1)}(f_{r-1}-w)\geq\frac{2(n_0-Cf)-f-1}{2n_0^2}f_{r-1}\geq\frac{n_0-Cf}{2n_0^2}f_{r-1},
\end{equation*}
where the first inequality holds when $r\in[R]$ and $n$ is large enough due to~\eqref{boundfrminusw}, the last inequality holds for large enough~$n$ since of course $n_0-Cf\geq f+1$ at some point, and by verification the inequality also holds for $r=1$. Further, if $r\in[R]$ and $n$ is large enough we get $f_{r-1}\geq f/(2n_0^{1/16})$ from~\eqref{lowerboundfrupper}, and thus
\begin{equation*}
    \mean[N^{(r)}]=m_{r-1}(1-p^{(r)}_2)\geq\frac{(m-C^\prime f)(n_0-Cf)}{2n_0^2}\cdot\frac{f}{2n_0^{1/16}}\geq\frac{\alpha_\epsilon f}{4n_0^{1/16}}
\end{equation*}
for all $r\in[R]$ when $n$ is large enough, where the last inequality holds because
\begin{equation*}
    \lim_{n\to\infty}\frac{(m-C^\prime f)(n_0-Cf)}{n_0^2}=\alpha.
\end{equation*}
Now, using the above lower bound on $\mean[N^{(r)}]$, we get for the following choice of $\delta$:
\begin{equation*}
    \delta=\frac{3r\alpha_\epsilon^r n_0^{3/8}}{(1-\epsilon)\mean[N^{(r)}]}\leq\frac{12Rn_0^{7/16}}{(1-\epsilon)f}\longrightarrow 0\quad\text{as $n\to\infty$},
\end{equation*}
and hence $\delta<1$ for all $r\in[R]$ when $n$ is large enough. Chernoff's bound~\eqref{chernoff} gives
\begin{equation*}
    \prob\Bigl(N^{(r)}>\mean[N^{(r)}]+r\frac{3\alpha_\epsilon^r}{1-\epsilon}n_0^{3/8}\Bigr)\leq\exp\biggl(\frac{-3r^2 \alpha_\epsilon^{2r}n_0^{3/4}}{(1-\epsilon)^2\mean[N^{(r)}]}\biggr).
\end{equation*}
From~\eqref{boundonmeansumm} it follows that $\mean[N^{(r)}]\leq (3\alpha_\epsilon^r f)/(1-\epsilon)$, and inserting this yields the first inequality in:
\begin{equation*}
    \exp\biggl(\frac{-3r^2 \alpha_\epsilon^{2r}n_0^{3/4}}{(1-\epsilon)^2\mean[N^{(r)}]}\biggr)\leq\exp\biggl(\frac{-r^2\alpha_\epsilon^r n_0^{3/4}}{(1-\epsilon)f}\biggr)\leq\exp\biggl(\frac{-n_0^{11/16}}{f}\biggr),
\end{equation*}
and the second inequality follows from~\eqref{betapowerRupper} and the fact that $r\geq 1$ and $1-\epsilon\leq 1$. The above inequality holds for all $r\in[R]$ when $n$ is large enough. Putting it all together, we conclude that
\begin{equation*}
    \sum_{r=1}^R\prob\bigl(\lvert\mathcal C^{(r)}_2\rvert<m_r\bigr)\leq R\exp\biggl(\frac{-n_0^{11/16}}{f}\biggr)\longrightarrow 0\quad\text{as $n\to\infty$},
\end{equation*}
proving~\ref{U4} and hence also finishing the proof of the case $k=2$ in Theorem~\ref{mainthm} when $\alpha>0$.

\subsection{The random mixed 1- and 2-SAT problem}
In this section, we give a proof of Theorem~\ref{mainthm2}, which follows quite readily from the part of Theorem~\ref{mainthm} that we have just proved. Remember that $n_0\to\infty$, $f/\sqrt{n_0}\to\beta$, and $m/n_0\to\alpha$, where $0\leq\beta\leq\infty$ and $0<\alpha<1$. We are considering a random $2$-CNF formula~$\varphi$ with $m$ clauses and $n_0$ variables and a random $1$-CNF formula~$\lambda$ with $f$ clauses and $n_0$ variables, such that $\varphi$ and~$\lambda$ are independent. Let $\Lambda=\{ L_1,L_2,\dots,L_f\}$ denote the set of the random literals defining~$\lambda$. From~\eqref{splitoffunitclauses} we get
\begin{equation*}
    \varphi\land\lambda\in\SAT\iff\lambda\in\SAT\quad\text{and}\quad\varphi_\Lambda\in\SAT.
\end{equation*}
Let $\mathcal L_0\coloneqq [n_0]\setminus [n_0-f]$ so that $\lvert\mathcal L_0\rvert=f\geq\lvert\Lambda\rvert$, and let $\mathcal B$ denote the set of all satisfiable $1$-CNF formulas with $n_0$ variables and at most $f$ clauses. We get from the above and Lemma~\ref{probdecreaseinc} that
\begin{align*}
    \prob(\varphi\land\lambda\in\SAT)&=\prob(\lambda\in\SAT,\varphi_\Lambda\in\SAT) \\
    &=\sum_{h\in\mathcal B}\prob(\varphi_\Lambda\in\SAT\mid\lambda=h)\prob(\lambda=h) \\
    &=\sum_{h\in\mathcal B}\prob(\varphi_h\in\SAT)\prob(\lambda=h) \\
    &\geq\prob(\varphi_{\mathcal L_0}\in\SAT)\prob(\lambda\in\SAT) \\
    &\longrightarrow\exp\biggl(\frac{-\beta^2\alpha}{4(1-\alpha)}\biggr)\exp\biggl(\frac{-\beta^2}{4}\biggr)\quad(\text{as $n\to\infty$}) \\
    &=\exp\biggl(\frac{-\beta^2}{4(1-\alpha)}\biggr),
\end{align*}
where the convergence comes from the first part of Theorem~\ref{mainthm} and Lemma~\ref{random1sat} respectively.

Now, to get a corresponding upper bound on $\prob(\varphi\land\lambda\in\SAT)$ we need a lower bound on~$\lvert\Lambda\rvert$, i.e.\ we need to bound the number of duplicates in the random literals defining~$\lambda$. We get from Lemma~\ref{fewduplicateliterals} that $\lvert\Lambda\rvert\geq f-\lfloor n_0^{1/4}\rfloor$ with probability tending towards~$1$. Now put $\mathcal L_1\coloneqq[n_0]\setminus[n_0-(f-\lfloor n_0^{1/4}\rfloor)]$ so that $\lvert\mathcal L_1\rvert=f-\lfloor n_0^{1/4}\rfloor$, and we repeat the argument from above: let $\mathcal B$ denote the set of all satisfiable $1$-CNF formulas with $n_0$ variables and at least $f-\lfloor n_0^{1/4}\rfloor$ clauses. Then
\begin{align*}
    \prob(\varphi\land\lambda\in\SAT)={}&\prob\bigl(\lambda\in\SAT,\,\varphi_\Lambda\in\SAT,\,\lvert\Lambda\rvert\geq f-\bigl\lfloor n_0^{1/4}\bigr\rfloor\bigr) \\
    &+\prob\bigl(\lambda\in\SAT,\,\varphi_\Lambda\in\SAT,\,\lvert\Lambda\rvert<f-\bigl\lfloor n_0^{1/4}\bigr\rfloor\bigr),
\end{align*}
where of course the second term vanishes, and using Lemma~\ref{probdecreaseinc}:
\begin{align*}
    \prob\bigl(\lambda\in\SAT,\,\varphi_\Lambda\in\SAT,\,\lvert\Lambda\rvert\geq f-\bigl\lfloor u^{1/4}\bigr\rfloor\bigr)&=\sum_{h\in\mathcal B}\prob(\varphi_h\in\SAT)\prob(\lambda=h) \\
    &\leq\prob(\varphi_{\mathcal L_1}\in\SAT)\sum_{h\in\mathcal B}\prob(\lambda=h) \\
    &\leq\prob(\varphi_{\mathcal L_1}\in\SAT)\prob(\lambda\in\SAT) \\
    &\longrightarrow\exp\biggl(\frac{-\beta^2}{4(1-\alpha)}\biggr)\quad\text{as $n\to\infty$},
\end{align*}
using in the end Theorem~\ref{mainthm}, since $\lvert\mathcal L_1\rvert/\sqrt{n_0}\to\beta$, and Lemma~\ref{random1sat}. This completes the proof of Theorem~\ref{mainthm2}, except for the case $\alpha=0$, which may be proved as follows: let $\psi$ be a random $2$-CNF formula with $\lfloor\epsilon n_0\rfloor$ clauses and $n_0$ variables. Then by Lemma~\ref{probdecreaseinm},
\begin{equation*}
    \liminf_{n\to\infty}\prob(\varphi\land\lambda\in\SAT)\geq\liminf_{n\to\infty}\prob(\psi\land\lambda\in\SAT)=\exp\biggl(\frac{-\beta^2}{4(1-\epsilon)}\biggr),
\end{equation*}
and
\begin{equation*}
    \limsup_{n\to\infty}\prob(\varphi\land\lambda\in\SAT)\leq\limsup_{n\to\infty}\prob(\lambda\in\SAT)=\exp\biggl(\frac{-\beta^2}{4}\biggr),
\end{equation*}
so taking $\epsilon\to 0$ yields the desired result.

\subsection{Sublinear number of binary clauses}

The only thing missing from a proof of Theorem~\ref{mainthm} in the case $k=2$ is the verification of
\begin{equation*}
    \lim_{n\to\infty}\prob(\varphi_\mathcal L\in\SAT)=e^{-(\gamma/2)^2},
\end{equation*}
where $\varphi$ is a random $2$-CNF formula with $m=m(n)$ clauses and $n_0=n_0(n)$ variables satisfying $m\to\infty$ and $m/n_0\to 0$, and $\mathcal L\subseteq\pm[n_0]$ is a consistent set of literals with $\lvert\mathcal L\rvert=f=f(n)$ such that $f\sqrt{m}/n_0\to\gamma$. We may assume without loss of generality that $0<\gamma<\infty$, as the cases $\gamma=0$ and $\gamma=\infty$ then follow by an application of Lemma~\ref{probdecreaseinm}, as we have shown in similar situations previously.

As usual we assume without loss of generality that $\mathcal L=[n_0]\setminus [n_0-f]$, so we get
\begin{equation*}
    \prob(\varphi_\mathcal L\in\SAT)=\prob\bigl(\lvert\mathcal C_0\rvert=0,\,\varphi_1\in\SAT,\, (\varphi_2)_{\varphi_1}\in\SAT\bigr)
\end{equation*}
from~\eqref{phiLsatiff}, and from Lemma~\ref{distofphiandm}:
\begin{equation*}
    \mean[\lvert\mathcal C_0\rvert]=m\frac{f(f-1)}{4n_0(n_0-1)}=\biggl(\frac{1}{2}\cdot\frac{f\sqrt{m}}{n_0}\biggr)^2\frac{f-1}{f}\cdot\frac{n_0}{n_0-1}\longrightarrow\biggl(\frac{\gamma}{2}\biggr)^2,
\end{equation*}
meaning that $\lvert\mathcal C_0\rvert$ is asymptotically Poisson-distributed with mean $(\gamma/2)^2$, yielding
\begin{equation*}
    \prob(\varphi_\mathcal L\in\SAT)\leq\prob(\lvert\mathcal C_0\rvert=0)\longrightarrow e^{-(\gamma/2)^2}.
\end{equation*}
This leaves the lower bound. By the exact same arguments as those leading up to~\eqref{lowerbdfirst}, we get
\begin{equation*}
    \prob(\varphi_\mathcal L\in\SAT)\geq\prob(\lambda\in\SAT)\prob(\psi_{\mathcal L_1}\in\SAT)\prob(\lvert\mathcal C_0\rvert=0)\prob\bigl(\lvert\mathcal C_1\rvert\leq f_1\bigm\vert\lvert\mathcal C_0\rvert=0\bigr),
\end{equation*}
where
\begin{equation*}
    f_1\coloneqq\bigl\lfloor 2\gamma\sqrt{m}+m^{3/8}\bigr\rfloor,
\end{equation*}
and $\lambda$ is a random $1$-CNF formula with $f_1$ clauses and $n_1\coloneqq n_0-f$ variables, $\psi$ is a random $2$-CNF formula with $m$ clauses and $n_1$ variables, and lastly $\mathcal L_1\coloneqq [n_1]\setminus [n_1-f_1]$. Of course, we still have $\prob(\lvert\mathcal C_0\rvert=0)\to e^{-(\gamma/2)^2}$, so we need to show that the other factors tend towards~$1$. We notice that
\begin{equation*}
    \frac{f}{n_0}=\frac{1}{\sqrt{m}}\cdot\frac{f\sqrt{m}}{n_0}\longrightarrow 0,
\end{equation*}
since $m\to\infty$, so $n_1/n_0\to 1$, and $f_1/\sqrt{m}\to 2\gamma$, thus
\begin{equation*}
    \frac{f_1}{\sqrt{n_1}}=\frac{f_1}{\sqrt{m}}\sqrt{\frac{m}{n_0}}\sqrt{\frac{n_0}{n_1}}\longrightarrow 0,
\end{equation*}
and with this Lemma~\ref{random1sat} gives $\prob(\lambda\in\SAT)\to 1$, and Lemma~\ref{alphaisnull} gives $\prob(\psi_{\mathcal L_1}\in\SAT)\to 1$. Next, $\lvert\mathcal C_1\rvert$ has a binomial distribution with parameters~$m$ and
\begin{equation*}
    p_{1\mid 0}\coloneqq\frac{f(n_0-f)}{n_0(n_0-1)-\tfrac{1}{4}f(f-1)}
\end{equation*}
given $\lvert\mathcal C_0\rvert=0$, again by Lemma~\ref{distofphiandm}. Let $N\sim\mathrm{Binom}(m,p_{1\mid 0})$, and notice that
\begin{equation*}
    \mean[N]=\frac{mf(n_0-f)}{n_0(n_0-1)-\tfrac{1}{4}f(f-1)}=\frac{f\sqrt{m}}{n_0}\cdot\frac{n_0-f}{n_0}\cdot\frac{n_0^2}{n_0(n_0-1)-\frac{1}{4}f(f-1)}\sqrt{m},
\end{equation*}
so since
\begin{equation*}
    \lim_{n\to\infty}\biggl[\frac{f\sqrt{m}}{n_0}\cdot\frac{n_0-f}{n_0}\cdot\frac{n_0^2}{n_0(n_0-1)-\frac{1}{4}f(f-1)}\biggr]=\gamma,
\end{equation*}
we conclude that
\begin{equation*}
    \tfrac{1}{2}\gamma\sqrt{m}\leq\mean[N]\leq 2\gamma\sqrt{m}
\end{equation*}
when $n$ is large enough. In particular we have
\begin{equation*}
    \delta=\frac{m^{3/8}}{\mean[N]}\leq\frac{2}{\gamma m^{1/8}}<1
\end{equation*}
for large enough $n$, so applying Chernoff's bound~\eqref{chernoff} yields:
\begin{align*}
    \prob\bigl(\lvert\mathcal C_1\rvert>f_1\bigm\vert\lvert\mathcal C_0\rvert=0\bigr)&=\prob(N>f_1) \\
    &=\prob\bigl(N>2\gamma\sqrt{m}+m^{3/8}\bigr) \\
    &\leq\prob\bigl(N>\mean[N]+m^{3/8}\bigr) \\
    &\leq\exp\biggl(\frac{-m^{3/4}}{3\mean[N]}\biggr) \\
    &\leq\exp\biggl(\frac{-m^{1/4}}{6\gamma}\biggr)\longrightarrow 0
\end{align*}
as $n\to\infty$, finishing the proof.

\subsection{Random 3-SAT}

We are able to prove Theorem~\ref{mainthm} directly in the case $k=3$ (without assuming $\alpha>0$). The proof in this section will follow along the same lines as before, but it is a bit more notationally heavy than in the case $k=2$, as there are now, in addition to $0$-, $1$-, and $2$-clauses, also $3$-clauses appearing, but it is mathematically more elementary, since the decomposition~\eqref{phiLsatiff3} only occurs three times, so there are no infinite series to deal with. We seek to prove that
\begin{equation*}
    \lim_{n\to\infty}\prob(\varphi_\mathcal L\in\SAT)=e^{-(\gamma/2)^3},
\end{equation*}
where $\varphi$ is a random $3$-CNF formula with $m=m(n)$ clauses and $n_0=n_0(n)$ variables, $\mathcal L\subseteq\pm[n_0]$ is a consistent set of literals with $\lvert\mathcal L\rvert=f=f(n)$, and $m\to\infty$, $m/n_0\to\alpha$, and $fm^{1/3}/n_0\to\gamma$ as $n\to\infty$, where $0\leq\alpha<3.145$ and $0\leq\gamma\leq\infty$. Assume without loss of generality that $\mathcal L=[n_0]\setminus [n_0-f]$ and $0<\gamma<\infty$.

From~\eqref{phiLsatiff3} we have
\begin{equation*}
    \prob(\varphi_\mathcal L\in\SAT)=\prob\bigl(\lvert\mathcal C^{(1)}_0\rvert=0,\,\varphi_1\in\SAT,\, (\varphi_2\land\varphi_3)_{\varphi_1}\in\SAT\bigr),
\end{equation*}
and from Lemma~\ref{distofphiandm} it follows that $\lvert\mathcal C^{(1)}_0\rvert$ is Binomially distributed with mean
\begin{equation*}
    \mean\bigl[\lvert\mathcal C^{(1)}_0\rvert\bigr]=m\frac{f(f-1)(f-2)}{8n_0(n_0-1)(n_0-2)}\longrightarrow\frac{\gamma^3}{8},
\end{equation*}
so $\lvert\mathcal C^{(1)}_0\rvert$ is asymptotically Poisson-distributed with mean $(\gamma/2)^3$, and thus
\begin{equation*}
    \lim_{n\to\infty}\prob\bigl(\lvert\mathcal C^{(1)}_0\rvert=0\bigr)=e^{-(\gamma/2)^3},
\end{equation*}
which immediately yields the correct upper bound for the limit $\lim_{n\to\infty}\prob(\varphi_\mathcal L\in\SAT)$.

For the lower bound we apply the same line of reasoning as in the lower bound in the case $k=2$. Put $K\coloneqq\{ 0,1,2,3,\star\}$ and $\mathcal C\coloneqq (\mathcal C^{(1)}_k)_{k\in K}$. Put further
\begin{equation*}
    f_1\coloneqq\bigl\lfloor\gamma^2 m^{1/3}+m^{1/5}\bigr\rfloor\quad\text{and}\quad m_1\coloneqq\bigl\lfloor 2\gamma m^{2/3}+m^{2/5}\bigr\rfloor.
\end{equation*}
We have
\begin{align*}
    \MoveEqLeft\prob\bigl(\lvert\mathcal C^{(1)}_0\rvert=0,\,\varphi_1\in\SAT,\, (\varphi_2\land\varphi_3)_{\varphi_1}\in\SAT\bigr) \\
    &\geq\prob\bigl(\lvert\mathcal C^{(1)}_0\rvert=0,\,\lvert\mathcal C^{(1)}_1\rvert\leq f_1,\,\lvert\mathcal C^{(1)}_2\rvert\leq m_2,\,\varphi_1\in\SAT,\, (\varphi_2\land\varphi_3)_{\varphi_1}\in\SAT\bigr) \\
    &=\sum_{M\in\mathfrak M}\prob\bigl(\varphi_1\in\SAT,\, (\varphi_2\land\varphi_3)_{\varphi_1}\in\SAT\bigm\vert\mathcal C=M\bigr)\prob(\mathcal C=M),
\end{align*}
where $\mathfrak M$ denotes the collection of all partitions $M=(M_k)_{k\in K}$ of $[m]$ satisfying $\lvert M_0\rvert=0$, $\lvert M_1\rvert\leq f_1$, and $\lvert M_2\rvert\leq m_1$. By the exact same argument as in~\eqref{transfermethod} we get
\begin{equation*}
    \prob\bigl(\varphi_1\in\SAT,\, (\varphi_2\land\varphi_3)_{\varphi_1}\in\SAT\bigm\vert\mathcal C=M\bigr)\geq\prob\bigl(\lambda^{(1)}\in\SAT\bigr)\prob\bigl((\psi^{(1)}\land\varphi^{(1)})_{\mathcal L_1}\in\SAT\bigr),
\end{equation*}
where $\lambda^{(1)}$ is a random $1$-CNF formula with $f_1$ clauses and $n_1\coloneqq n_0-f$ variables, $\psi^{(1)}$ is a random $2$-CNF formula with $m_1$ clauses and $n_1$ variables, and $\varphi^{(1)}$ is a random $3$-CNF formula with $m$ clauses and $n_1$ variables such that (the clauses of) $\psi^{(1)}$ and~$\varphi^{(1)}$ are independent, and finally $\mathcal L_1\coloneqq [n_1]\setminus [n_1-f_1]$. Hence,
\begin{equation}
\label{decomp3}
\begin{aligned}
    \prob(\varphi_\mathcal L\in\SAT)\geq{}&\prob\bigl(\lambda^{(1)}\in\SAT\bigr)\prob\bigl((\psi^{(1)}\land\varphi^{(1)})_{\mathcal L_1}\in\SAT\bigr) \\
    &\times\prob\bigl(\lvert\mathcal C^{(1)}_0\rvert=0,\,\lvert\mathcal C^{(1)}_1\rvert\leq f_1,\,\lvert\mathcal C^{(1)}_2\rvert\leq m_1\bigr).
\end{aligned}
\end{equation}
Lemma~\ref{distofphiandm} again tells us that both $\lvert\mathcal C^{(1)}_1\rvert$ and $\lvert\mathcal C^{(1)}_2\rvert$ are binomially distributed with parameters $m$ and $p^{(1)}_1$ or $p^{(1)}_2$ respectively, where
\begin{equation*}
    p^{(1)}_1=\frac{3f(f-1)(n_0-f)}{4n_0(n_0-1)(n_0-2)},\quad\text{and}\quad p^{(1)}_2=\frac{3f(n_0-f)(n_0-f-1)}{2n_0(n_0-1)(n_0-2)}.
\end{equation*}
A quick calculation shows that $\mean[\lvert\mathcal C^{(1)}_1\rvert]$ is asymptotic to $\tfrac{3}{4}\gamma^2 m^{1/3}$ and $\mean[\lvert\mathcal C^{(1)}_2\rvert]$ is asymptotic to $\tfrac{3}{2}\gamma m^{2/3}$ as $n\to\infty$. Thus, we can apply Chernoff's bound~\eqref{chernoff} to receive
\begin{equation*}
    \prob\bigl(\lvert\mathcal C^{(1)}_1\rvert>f_1\bigr)\leq\prob\bigl(\lvert\mathcal C^{(1)}_1\rvert>\mean\bigl[\lvert\mathcal C^{(1)}_1\rvert\bigr]+m^{1/5}\bigr)\leq\exp\biggl(\frac{-m^{2/5}}{3\mean\bigl[\lvert\mathcal C^{(1)}_1\rvert\bigr]}\biggr)\leq\exp\biggl(\frac{-m^{1/15}}{3\gamma^2}\biggr),
\end{equation*}
where the first and last inequality holds for large enough~$n$, so that $\lvert\mathcal C^{(1)}_1\rvert\leq f_1$ w.h.p. By a similar argument we see that $\lvert\mathcal C^{(1)}_2\rvert\leq m_1$ w.h.p. This means that
\begin{equation*}
    \lim_{n\to\infty}\prob\bigl(\lvert\mathcal C^{(1)}_0\rvert=0,\,\lvert\mathcal C^{(1)}_1\rvert\leq f_1,\,\lvert\mathcal C^{(1)}_2\rvert\leq m_1\bigr)=e^{-(\gamma/2)^3},
\end{equation*}
so it remains only to show that the other factors in~\eqref{decomp3} tend towards~$1$ as $n\to\infty$. We notice that
\begin{equation*}
    \frac{f}{n_0}=\frac{fm^{1/3}}{n_0}\cdot\frac{1}{m^{1/3}}\longrightarrow 0,\quad\text{so}\quad\frac{n_1}{n_0}\longrightarrow 1,\quad\text{and thus}\quad\frac{f_1}{\sqrt{n_1}}=\frac{f_1}{\sqrt{m}}\sqrt{\frac{m}{n_0}}\sqrt{\frac{n_0}{n_1}}\longrightarrow 0.
\end{equation*}
Lemma~\ref{random1sat} thus gives $\prob(\lambda^{(1)}\in\SAT)\to 1$.

Looking at the final factor, let for each $k\in\{0,1,2,\star\}$ $\mathcal C^{(2)}_k(2)$ denote the set of $j\in [m_1]$ for which the $j$'th clause of $\psi^{(1)}$ becomes a $k$-clause when fixing the variables dictated by $\mathcal L_1$ (cf.~\eqref{defnmk}), and let $\mathcal C^{(2)}_k(3)$ denote the corresponding set for $\varphi^{(1)}$. Put
\begin{equation*}
    \lvert\mathcal C^{(2)}_k\rvert\coloneqq\lvert\mathcal C^{(2)}_k(2)\rvert+\lvert\mathcal C^{(2)}_k(3)\rvert,
\end{equation*}
where the two terms are seen to be independent. As before we are able to make the following decomposition:
\begin{align*}
    \prob\bigl((\psi^{(1)}\land\varphi^{(1)})_{\mathcal L_1}\in\SAT\bigr)\geq{}&\prob\bigl(\lambda^{(2)}\in\SAT\bigr)\prob\bigl((\psi^{(2)}\land\varphi^{(2)})_{\mathcal L_2}\in\SAT\bigr) \\
    &\times\prob\bigl(\lvert\mathcal C^{(2)}_0\rvert=0,\,\lvert\mathcal C^{(2)}_1\rvert\leq f_2,\,\lvert\mathcal C^{(2)}_2\rvert\leq m_2\bigr),
\end{align*}
where $n_2\coloneqq n_1-f_1$,
\begin{equation*}
    f_2\coloneqq\bigl\lfloor m^{1/5}\bigr\rfloor,\quad m_2\coloneqq\bigl\lfloor 3\gamma m^{2/3}+m^{2/5}\bigr\rfloor,\quad\text{and}\quad\mathcal L_2\coloneqq [n_2]\setminus [n_2-f_2],
\end{equation*}
and $\lambda^{(2)}$ is a random $1$-CNF formula with $f_2$ clauses and $n_2$ variables, $\psi^{(2)}$ is a random $2$-CNF formula with $m_2$ clauses and $n_2$ variables, and $\varphi^{(2)}$ is a random $3$-CNF formula with $m$ clauses and $n_2$ variables such that (the clauses of) $\psi^{(2)}$ and~$\varphi^{(2)}$ are independent.

As before $f_2/\sqrt{n_2}\to 0$, so Lemma~\ref{random1sat} gives $\mathbb P(\lambda^{(2)}\in\SAT)\to 1$. Looking at Lemmas~\ref{distofphiandm}, a quick calculation shows that $\mean[\lvert\mathcal C^{(2)}_0(2)\rvert]$ is asymptotic to $\tfrac{1}{2}\gamma^5\alpha^{4/3}n_0^{-2/3}\to 0$, and $\mean[\lvert\mathcal C^{(2)}_0(3)\rvert]$ is asymptotic to $\frac{1}{8}\gamma^6\alpha^2 n_0^{-1}\to 0$, and it follows that $\lvert\mathcal C^{(2)}_0\rvert=0$ w.h.p.\ by an application of Markov's inequality:
\begin{equation*}
    \prob\bigl(\lvert\mathcal C^{(2)}_0\rvert>0\bigr)\leq\mean\bigl[\lvert\mathcal C^{(2)}_0\rvert\bigr]\longrightarrow 0.
\end{equation*}
Next, we find that $\mean[\lvert\mathcal C^{(2)}_1(2)\rvert]\to 2\gamma^3 \alpha$ and $\mean[\lvert\mathcal C^{(2)}_1(3)\rvert]$ is asymptotic to $\tfrac{3}{4}\gamma^4 \alpha^{5/3}n_0^{-1/3}\to 0$, so $\lvert\mathcal C^{(2)}_1\rvert\leq f_2$ w.h.p.\ again by Markov's inequality:
\begin{equation*}
    \prob\bigl(\lvert\mathcal C^{(2)}_1\rvert>f_2\bigr)\leq\frac{\mean\bigl[\lvert\mathcal C^{(2)}_1\rvert\bigr]}{f_2}\longrightarrow 0.
\end{equation*}
Lastly, $\mean[\lvert\mathcal C^{(2)}_2(2)\rvert]$ is asymptotic to $2\gamma m^{2/3}$ and $\mean[\lvert\mathcal C^{(2)}_2(3)\rvert]$ is asymptotic to $\tfrac{3}{2}\gamma^2 \alpha m^{1/3}$, so $\mean[\lvert\mathcal C^{(2)}_2\rvert]\leq 3\gamma m^{2/3}$ for large enough~$n$, and Chernoff's inequality~\eqref{chernoff} yields
\begin{equation*}
    \prob\bigl(\lvert\mathcal C^{(2)}_2\rvert>m_2\bigr)\leq\prob\bigl(\lvert\mathcal C^{(2)}_2\rvert>\mean\bigl[\lvert\mathcal C^{(2)}_2\rvert\bigr]+m^{2/5}\bigr)\leq\exp\biggl(\frac{-m^{4/5}}{3\mean\bigl[\lvert\mathcal C^{(2)}_2\rvert\bigr]}\biggr)\leq\exp\biggl(\frac{-m^{2/15}}{9\gamma^2}\biggr),
\end{equation*}
and thus $\lvert\mathcal C^{(2)}_2\rvert\leq m_2$ w.h.p. Hence, we are only missing a verification of the fact that $(\psi^{(2)}\land\varphi^{(2)})_{\mathcal L_2}$ is satisfiable w.h.p. We make the final decomposition:
\begin{equation*}
    \prob\bigl((\psi^{(2)}\land\varphi^{(2)})_{\mathcal L_2}\in\SAT\bigr)\geq\prob\bigl(\psi^{(3)}\land\varphi^{(3)}\in\SAT\bigr)\prob\bigl(\lvert\mathcal C^{(3)}_0\rvert=0,\,\lvert\mathcal C^{(3)}_1\rvert=0,\,\lvert\mathcal C^{(3)}_2\rvert\leq m_3\bigr),
\end{equation*}
where $n_3\coloneqq n_2-f_2$ and $m_3\coloneqq\lfloor 4\gamma m^{2/3}+m^{2/5}\rfloor$, $\psi^{(3)}$ is a random $2$-CNF formula with $m_3$ clauses and $n_3$ variables, and $\varphi^{(3)}$ is a random $3$-CNF formula with $m$ clauses and $n_3$ variables such that (the clauses of) $\psi^{(3)}$ and~$\varphi^{(3)}$ are independent (this corresponds to taking $f_3\coloneqq 0$).

We see that $\mean[\lvert\mathcal C^{(3)}_0(2)\rvert]$ is asymptotic to $\tfrac{3}{4}\gamma\alpha^{16/15}n_0^{-14/15}\to 0$, and $\mean[\lvert\mathcal C^{(3)}_0(3)\rvert]$ is asymptotic to $\tfrac{1}{8}\alpha^{8/5}n_0^{-7/5}\to 0$, so $\lvert\mathcal C^{(3)}_0\rvert=0$ w.h.p. Similarly $\mean[\lvert\mathcal C^{(3)}_1(2)\rvert]$ is asymptotic to $3\gamma\alpha^{13/15}n_0^{-2/15}\to 0$, and $\mean[\lvert\mathcal C^{(3)}_1(3)\rvert]$ is asymptotic to $\tfrac{3}{4}\alpha^{7/5}n_0^{-3/5}\to 0$, so again $\lvert\mathcal C^{(3)}_1\rvert=0$ w.h.p. Finally, $\mean[\lvert\mathcal C^{(3)}_2(2)\rvert]$ is asymptotic to $3\gamma m^{2/3}$ and $\mean[\lvert\mathcal C^{(3)}_2(3)\rvert]$ is asymptotic to $\tfrac{3}{2}\alpha m^{1/5}$, so $\mean[\lvert\mathcal C^{(3)}_2\rvert]\leq 4\gamma m^{2/3}$ for large enough~$n$, and by Chernoff's inequality we find that $\lvert\mathcal C^{(3)}_2\rvert\leq m_3$ w.h.p.

The final step in the proof is to show that the mixed formula $\psi^{(3)}\land\varphi^{(3)}$ is satisfiable w.h.p. Since $m/n_0\to\alpha$, $m_3/m^{2/3}\to 4\gamma$, and $n_3/n_0\to 1$, it follows that $m_3\leq\lfloor 10^{-6}n_3\rfloor$ for large enough~$n$. It further holds that $m/n_3\to\alpha<3.145$, so $m\leq\lfloor 3.145 n_3\rfloor$ for large enough~$n$. Let $\Phi$ denote a random mixed CNF formula with $n_3$ variables, $\lfloor 10^{-6}n_3\rfloor$ 2-clauses, and $\lfloor 3.145 n_3\rfloor$ 3-clauses. It follows from Lemma~\ref{probdecreaseinm} that
\begin{equation*}
    \liminf_{n\to\infty}\prob\bigl(\psi^{(3)}\land\varphi^{(3)}\in\SAT\bigr)\geq\liminf_{n\to\infty}\prob(\Phi\in\SAT).
\end{equation*}
From the first lines in the proof of Theorem~1 in~\cite{achlioptas00} it follows that $\liminf_{n\to\infty}\prob(\Phi\in\SAT)>0$, and it then follows from Theorem~2 from~\cite{AKKK01} that $\Phi$ is satisfiable w.h.p., completing the proof of our Theorem~\ref{mainthm}.

\subsection{The random mixed 1- and 3-SAT problem}

The final part is the proof of Theorem~\ref{mainthm2} in the case $k=3$. Thus, we again have $n_0\to\infty$, and we let~$\varphi$ denote a random $3$-CNF formula with $m$ clauses and $n_0$ variables, such that $m/n_0\to\alpha$ for some $0\leq\alpha<3.145$, and let~$\lambda$ denote a random $1$-CNF formula with $f$ clauses and $n_0$ variables, such that $f/\sqrt{n_0}\to\beta$ for some $0\leq\beta\leq\infty$, where $\varphi$ and $\lambda$ are independent. Then we immediately have
\begin{equation*}
    \limsup_{n\to\infty}\prob(\varphi\land\lambda\in\SAT)\leq\limsup_{n\to\infty}\prob(\lambda\in\SAT)=e^{-(\beta/2)^2}
\end{equation*}
from Lemma~\ref{probdecreaseinm} and Lemma~\ref{random1sat}.

To obtain the corresponding lower bound, we first note from~\eqref{splitoffunitclauses} that
\begin{equation*}
    \varphi\land\lambda\in\SAT\iff\lambda\in\SAT\quad\text{and}\quad\varphi_\lambda\in\SAT.
\end{equation*}
Let $\mathcal L\coloneqq [n_0]\setminus [n_0-f]$ and note that $\lambda$ will always have at most $f=\lvert\mathcal L\rvert$ distinct clauses. Thus, letting $\mathcal B$ denote the set of all satisfiable $1$-CNF formulas with $n_0$ variables and at most $f$ (distinct) clauses, we get from the above and Lemma~\ref{probdecreaseinc} that
\begin{align*}
    \prob(\varphi\land\lambda\in\SAT)&=\prob(\lambda\in\SAT,\varphi_\lambda\in\SAT) \\
    &=\sum_{h\in\mathcal B}\prob(\varphi_\lambda\in\SAT\mid\lambda=h)\prob(\lambda=h) \\
    &=\sum_{h\in\mathcal B}\prob(\varphi_h\in\SAT)\prob(\lambda=h) \\
    &\geq\prob(\varphi_\mathcal L\in\SAT)\prob(\lambda\in\SAT) \\
    &\longrightarrow e^{-(\beta/2)^2}\quad\text{as $n\to\infty$},
\end{align*}
where we in the end use Lemma~\ref{random1sat} and Theorem~\ref{mainthm}. This concludes the proof of Theorem~\ref{mainthm2} and thus the entire article.

\newpage
\phantomsection
\addcontentsline{toc}{section}{References}
\printbibliography

\end{document}